\documentclass[oneside, 12pt]{amsart}

\usepackage{comment}

\usepackage{mathtools, nccmath}

\usepackage{tikz}

\usepackage{enumerate}
\usepackage{array}
\usepackage[utf8]{inputenc}
\usepackage[margin=1in]{geometry}
\usepackage{amsthm}
\usepackage{enumitem}  
\usepackage[OT2,T1]{fontenc}
\usepackage{amscd,amssymb, enumitem, amsmath,mathrsfs, amsfonts, amsaddr}
\usepackage{url}
\usepackage{tikz}
\usepackage{fullpage}
\usepackage{microtype}
\usetikzlibrary{decorations.pathreplacing}
\usepackage[backref=page]{hyperref}
\usepackage{hyperref}
\usepackage[utf8]{inputenc}
\usepackage{enumitem}
\usepackage[new]{old-arrows}
\usepackage{mathtools}

\usepackage{caption}

\usetikzlibrary{decorations.pathreplacing,calligraphy}

\DeclareSymbolFont{cyrletters}{OT2}{wncyr}{m}{n}
\DeclareMathSymbol{\Sha}{\mathalpha}{cyrletters}{"58}

\makeatletter
\newcommand{\pushright}[1]{\ifmeasuring@#1\else\omit\hfill$\displaystyle#1$\fi\ignorespaces}
\newcommand{\pushleft}[1]{\ifmeasuring@#1\else\omit$\displaystyle#1$\hfill\fi\ignorespaces}
\makeatother

\setlength{\textwidth}{17cm}
\setlength{\textheight}{9in}
\setlength{\oddsidemargin}{-0.0in}

\usepackage[utf8]{inputenc}

\newtheorem{theorem}{Theorem}[section]
\newtheorem{lemma}[theorem]{Lemma}
\newtheorem{proposition}[theorem]{Proposition}
\newtheorem{proposition*}{Proposition}
\newtheorem{corollary}[theorem]{Corollary}
\newtheorem*{theorem*}{Theorem}
\newtheorem*{corollary*}{Corollary}

\theoremstyle{definition}

\newtheorem{example}[theorem]{Example}

\newtheorem{claim}[theorem]{Claim}
\newtheorem{remark}[theorem]{Remark}
\newtheorem{emp}[theorem]{\bf }{\kern -4pt}
\newtheorem*{acknowledgement}{Acknowledgements}

\theoremstyle{theorem}
\newtheorem{conjecture}[theorem]{Conjecture}
\theoremstyle{plain}


\newtheoremstyle{TheoremNum}
{\topsep}{\topsep}              
{\itshape}                      
{}                              
{\bfseries}                     
{.}                             
{ }                             
{\thmname{#1}\thmnote{ \bfseries #3}}
\theoremstyle{TheoremNum}

\newtheorem{prorep}{Proposition}


\theoremstyle{remark}

\DeclareRobustCommand\longtwoheadrightarrow
{\relbar\joinrel\twoheadrightarrow}

\newcommand{\bd}{400}

\newcommand{\reffourmidm}{9.13}
\newcommand{\refverification}{2.18}

\title{On the Kodaira types of Elliptic curves with potentially good supersingular reduction}
\author{Haiyang Wang}
\address{}
\date{\today}

\usepackage{draftwatermark}
\SetWatermarkText{}
\SetWatermarkScale{3}

\begin{document}
	
	\maketitle
	

	\begin{abstract}
		Let $\mathcal{O}_K$ be a Henselian discrete valuation domain with field of
		fractions $K$. Assume that $\mathcal{O}_K$ has algebraically closed residue field $k$. Let $E/K$ be an elliptic curve with additive reduction. The semi-stable
		reduction theorem asserts that there exists a minimal extension $L/K$ such that the base change $E_L/L$ has semi-stable reduction.
		
		It is natural to wonder whether specific properties of the semi-stable reduction and of the extension $L/K$ impose restrictions on what types of Kodaira type the special fiber of $E/K$ may have. In this paper we study the restrictions imposed on the reduction type
		when the extension $L/K$ is wildly ramified of degree $2$, and the curve $E/K$ has potentially good supersingular reduction. We also analyze the possible reduction types of two isogenous elliptic curves with these properties.
	\end{abstract}


\section{Introduction}

\begin{emp}\label{em.intro}
	Let $\mathcal{O}_K$ be a Henselian discrete valuation domain with field of fractions $K$. Assume that $\mathcal{O}_K$ has algebraically closed residue field $k$ of characteristic $2$. Let $\pi$ be a uniformizer and $v$ the normalized valuation of $K$ so that $v(\pi)=1$. We will also use $\pi_K$ and $v_K$ to denote a uniformizer and the normalized valuation of $K$ respectively when there might be confusion. We use the convention that $v(0)=\infty$. 
	
	Let $E/K$ be an elliptic curve. Assume that $E/K$ has bad reduction modulo $(\pi)$. By the semistable reduction theorem (see e.g. \cite{Liu} Theorem 10.4.44), there exists a unique minimal extension $L/K$ such that the base change $E_L/L$ has semistable reduction at the unique prime ideal above $(\pi)$. Here minimal means that every extension that acquires semistable reduction contains $L$, and the uniqueness of the prime ideal over $(\pi)$ comes from the assumption that $K$ is Henselian. 
	
	In \cite{Lorenzini10}, Theorem 2.8, Lorenzini investigated the reduction type of $E/K$ under the assumption that $E_L/L$ has multiplicative reduction. In \cite{Lorenzini13}, Theorem 4.2, Lorenzini determined the reduction type of $E/K$ assuming that $E_L/L$ has good ordinary reduction. In both cases, $L/K$ is quadratic and $E/K$ has the reduction type $\mathrm{I}^{\ast}_{n}$ for some $n\ge 1$.  
	
	In this article, we study the reduction type of $E/K$ when $E_L/L$ has good supersingular reduction. It is not true in general that $L/K$ is quadratic in this setting. See \cite{Kraus}, Théorème 2 and Théorème 3. We will restrict our attention to the case where $[L:K]=2$. Even under this assumption, it is not true that the reduction type can only be of the kind $\mathrm{I}^{\ast}_{n}$.
	
	Let $H:=\operatorname{Gal}(L/K)$ and $H= H_0\trianglerighteq H_1\trianglerighteq,\dots$ be the higher ramification groups of $L/K$. Here the equality $H= H_0$ follows from the assumption that the residue field $k$ is algebraically closed. Define $s_{L/K}$ by the relation $s_{L/K}+1=\sum\limits_{i=0}^{\infty}(|H_i|-1)$. Note that $\sum\limits_{i=0}^{\infty}(|H_i|-1)$ is the valuation of the different of the extension $\mathcal{O}_L/\mathcal{O}_K$. See \cite{Serre} IV.2, Proposition 4. Our first main theorem is the following.
	
\end{emp}

\medskip

\begin{theorem}\label{thm.supred}
	Let $\mathcal{O}_K$ be as in \ref{em.intro}. Let $E/K$ be an elliptic curve with additive reduction. Assume that there exists a quadratic extension $L/K$ such that $E_L/L$ has good supersingular reduction. Let $j:=j(E)$. Then the following is true.
	
	\begin{enumerate}[label=(\alph*)]

		\item Assume that $v(j)\le 4s_{L/K}-4$. Then $E/K$ has reduction type $\mathrm{I}^{\ast}_{4s_{L/K}-v(j)}$ with $v(j)$ divisible by $12$.
		
		\item Assume that $v(j)> 4s_{L/K}-4$. Let $f\in\{1,3,5\}$ satisfy $2s_{L/K}+3\equiv f \operatorname{mod} 6$. Then $E/K$ has reduction type $\mathrm{II} \,\,(\text{resp. } \mathrm{I}^{\ast}_0, \mathrm{II}^{\ast})$ if and only if $f=1\,\,(\text{resp. 3,5})$.
		
	\end{enumerate}
\end{theorem}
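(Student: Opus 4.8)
The plan is to compare a minimal Weierstrass model of $E/K$ with a minimal model of $E_L/L$ and to read off the Kodaira type from the valuation of the minimal discriminant together with the conductor exponent. Fix a minimal equation of $E/K$ with invariants $c_4,c_6,\Delta$ and write $v=v_K$. Since $k$ is algebraically closed, $L/K$ is totally ramified of degree $2$, so $v_L(x)=2v(x)$ for $x\in K$. Let $u\in L^\times$ be the scaling relating the base-changed model to a minimal model of $E_L$; as $\Delta$ scales by $u^{-12}$ and $E_L$ has good reduction, $0=v_L(\Delta^{\min}_L)=2v(\Delta)-12\mu$ with $\mu:=v_L(u)\in\mathbb{Z}$, whence $v(\Delta)=6\mu$ and in particular $6\mid v(\Delta)$. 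The scalings of $c_4$ by $u^{-4}$ and of $\Delta$ by $u^{-12}$ give $v(j)=3v(c_4)-6\mu$ and $v_L(j)=2v(j)$. Because $0$ is the only supersingular $j$-invariant in characteristic $2$, the supersingular hypothesis is equivalent to $\tilde{j}=0$, i.e.\ to $v(j)\ge 1$.

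Next I would compute the conductor exponent $f$. Since $E$ has additive but potentially good reduction, the image of $I_K$ in $\operatorname{Aut}(T_\ell E)$ is finite, nontrivial, and (as $E_L$ has good reduction and $L/K$ is quadratic Galois) factors through $\operatorname{Gal}(L/K)$, so it has order $2$. Additive reduction forces $V_\ell(E)^{I_K}=0$, so this involution has no nonzero fixed vector and must act as the scalar $-1$; equivalently $V_\ell(E)|_{I_K}$ is two copies of the quadratic character $\chi$ of $L/K$. Hence $\operatorname{Swan}(V_\ell(E))=2\operatorname{Swan}(\chi)=2s_{L/K}$ and $f=2+2s_{L/K}$. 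Ogg's formula $v(\Delta)=f+m-1$ then gives $m=6\mu-2s_{L/K}-1$, where $m$ is the number of irreducible components of the special fibre of the minimal regular model.

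The heart of the argument, and the step I expect to be the main obstacle, is to pin down $\mu$ (equivalently $v(c_4)$) explicitly in terms of $s_{L/K}$ and $v(j)$, since so far I have only relations that are mutually consistent but not determining. For this I would make the wildly ramified extension $L/K$ explicit, realizing it as the extension generated by the scaling that renders the supersingular model integral, and compute its ramification break $s_{L/K}$ (the different exponent minus $1$) directly from the valuations of the Weierstrass coefficients, in the spirit of Kraus. I expect this to yield $v(c_4)=2s_{L/K}+2$ in the regime of case~(a), hence $\mu=s_{L/K}+1-v(j)/6$ and $m=4s_{L/K}-v(j)+5$. The delicacy is genuinely $2$-adic: the identity $c_4^3-c_6^2=1728\,\Delta$ with $v(1728)\gg 0$ must be exploited both to sharpen the divisibility $6\mid v(j)$ to $12\mid v(j)$ in case~(a) and to control the exact value of $\mu$ (not merely its residue) when $v(j)$ is large.

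Finally I would identify the type from the explicit model. In case~(a), where $v(j)\le 4s_{L/K}-4$, one has $m=n+5$ with $n=4s_{L/K}-v(j)\ge 4$; running Tate's algorithm on the explicit minimal model confirms type $\mathrm{I}^{\ast}_{n}$ (rather than an exceptional type sharing the same component count, such as $\mathrm{II}^\ast$ at the boundary $n=4$) and produces the divisibility $12\mid v(j)$. In case~(b), where $v(j)>4s_{L/K}-4$, the component count is forced into $\{1,5,9\}$; here the supersingular condition together with Tate's algorithm excludes the ambiguous types $\mathrm{IV}$ and $\mathrm{IV}^\ast$ (which have $6\mid v(\Delta)$ for suitable $s_{L/K}$), and singles out $\mathrm{II},\mathrm{I}^{\ast}_{0},\mathrm{II}^\ast$ according to the unique value of $v(\Delta)\in\{2s_{L/K}+2,2s_{L/K}+6,2s_{L/K}+10\}$ divisible by $6$, which is governed precisely by the residue $f$ of $2s_{L/K}+3$ modulo $6$.
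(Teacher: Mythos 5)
Your structural scaffolding is largely sound and in places genuinely different from the paper's route: the identification of the inertia action on $V_\ell E$ as $\chi\oplus\chi$, giving conductor exponent $2+2s_{L/K}$, the relation $v(\Delta_{\min})=6\mu$ forcing $6\mid v(\Delta_{\min})$, and the observation that in case (b) the three candidate values $2s_{L/K}+2,\,2s_{L/K}+6,\,2s_{L/K}+10$ of $v(\Delta_{\min})$ exhaust the even residues mod $6$, so that exactly one is admissible and it is selected precisely by $f$, are all correct and would make a clean finish \emph{if} the inputs were available. But the proof has a genuine gap exactly where you flag "the main obstacle": nothing in the proposal actually determines $\mu$ (equivalently $v(c_4^{\min})$ or $v(\Delta_{\min})$). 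The relations you derive from scaling and from Ogg's formula are, as you say yourself, mutually consistent but not determining, and the claim $v(c_4^{\min})=2s_{L/K}+2$ is only "expected". Establishing it is the entire content of the theorem, and it is precisely what the paper does: it passes to the quadratic twist $E'$ of $E$ by $L/K$ (which has good supersingular reduction, so has a model with $a_3\in\mathcal{O}_K^*$, $a_1\in\pi\mathcal{O}_K$), writes $E$ explicitly as the twist of that model in each of the three shapes of $L/K$ (Artin--Schreier, $v(D)=1$, $v(D)=0$ Eisenstein), and runs the full subprocedure of Step 7 of Tate's algorithm on the dilated equation, tracking valuations through every translation. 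Your plan to "realize $L/K$ as the extension generated by the scaling that renders the supersingular model integral" is not a construction: $L/K$ is the twisting extension, and a minimal model of $E$ is not obtained from one of $E_L$ by a scaling alone. Deferring to "running Tate's algorithm on the explicit minimal model" is deferring to a model you have not produced.

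Two further points would still need closing even if $\mu$ were pinned down. In case (a) the component count $m=n+5$ does not separate $\mathrm{I}_4^*$ from $\mathrm{II}^*$ (both have $m=9$), nor does it yield $12\mid v(j)$; the paper gets the former from the terminal step of the algorithm (the coefficient of $x$ in the relevant quadratic is a unit) and the latter from Lemma \ref{le.supersing-va1-v2}(b) applied to the twist $E'$, via $v(j)=12v(a_1)$. In case (b), restricting the type to $\{\mathrm{II},\mathrm{I}_0^*,\mathrm{II}^*\}$ before applying your mod-$6$ argument is not free: parity of $v(\Delta_{\min})$ kills $\mathrm{III},\mathrm{III}^*$, and one could kill $\mathrm{IV},\mathrm{IV}^*$ because their component group $\mathbb{Z}/3\mathbb{Z}$ is not annihilated by $[L:K]=2$, but excluding $\mathrm{I}_n^*$ with $n\ge 2$ even (component group $(\mathbb{Z}/2\mathbb{Z})^2$, $v(\Delta_{\min})$ even) has no such shortcut and again requires the explicit computation. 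So the proposal is an attractive alternative framing, but as it stands it reduces the theorem to its own hardest step without proving it.
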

\medskip
The proof of Theorem \ref{thm.supred} is found in \ref{em.proof.red}.
\begin{remark}
	Let $\mathcal{O}_K$ be as in \ref{em.intro}. Let $E/K$ be an elliptic curve with additive reduction. Assume that there is a quadratic extension $L/K$ such that $E_L/L$ has good supersingular reduction. Then $E_L/L$ has a smooth model $\mathcal{Y}/\mathcal{O}_L$. The Galois group $H:=\operatorname{Gal}(L/K)$ acts on $\mathcal{Y}$ and $\mathcal{Y}_k$. Let $\mathcal{Z}/\mathcal{O}_K$ be the quotient $\mathcal{Y}/G$. Let $\sigma$ denote the automorphism of $\mathcal{Y}_k$ induced by the action of the generator of $G$. Then $\sigma$ is the involution of $\mathcal{Y}_k$. Since $\mathcal{Y}_k$ is assumed to be supersingular, $\mathcal{Y}_k$ has no non-trivial $2$-torsion. Hence $\sigma$ has a unique fixed point $P$. The scheme $\mathcal{Z}$ is singular exactly at the image of $P$, denote it by $Q$, under the quotient map $\mathcal{Y}\rightarrow\mathcal{Z}$. See \cite{Lorenzini14} p. 331-332 for more details about this construction. Assume that $E/K$ has Kodaira type $\mathrm{I}^{\ast}_{m}$ with $m\ge 1$. The minimal resolution of $Q$ has dual graph of the following shape:
	\begin{equation}
		\label{fig:dualgraph}
		\begin{gathered}
			\begin{tikzpicture}
				[node distance=1cm, font=\small]
				\tikzstyle{vertex}=[circle, draw, fill, inner sep=0mm, minimum size=1.0ex]
				\node[vertex]	(H1)  	at (0,0) 	[label=below:{}]               {};
				\node[vertex]	(H2)  	         	[right of=H1,label=below:{}]               {};
				\node[vertex]	(H3)  	         	[right of=H2,label=below:{}]               {};
				\node[vertex]	(HC)			[right of=H3, label= below:{$-3$ }]    {};
				\node[vertex]	(H4)			[right of=HC, label=above:{}]    {};
				\node[vertex]	(H5)			[right of=H4, label=below:{}]    {};
				\node[vertex]	(H6)			[right of=H5, label=above:{}]    {};
				\node[vertex]	(W1)			[above left of=H1, label=above:{}]    {};
				\node[vertex]	(W2)			[below left of=H1, label=above:{}]    {};
				\node[vertex]	(E1)			[above right of=H6, label=below:{}]    {};
				\node[vertex]	(E2)			[below right of=H6, label=below:{}]    {};
				\node[vertex]	(V1)			[above of=HC, label=above:{$-2$}]    {};
				\node[vertex]	(V2)			[left of=V1, label=above:{$-2$}]    {};
				\node[vertex,fill=none]	(V3)			[left of=V2, label=above:{$-1$}]    {};
				\draw [thick] (H1)--(H2);
				\draw [thick,dashed]  (H2)--(H3);
				\draw [thick] (H3)--(HC)--(H4);
				\draw [thick,dashed]  (H4)--(H5);
				\draw [thick] (H5)--(H6);
				\draw [thick] (W1)--(H1)--(W2);
				\draw [thick] (E1)--(H6)--(E2);
				\draw [thick] (V1)--(HC);
				\draw [thick,dashed] (V2)--(V1);
				\draw [thick] (V2)--(V3);
				\draw [decorate,line width=1.25pt, decoration={calligraphic brace,mirror,amplitude=5pt,raise=3.6ex,}]
				(V1) -- (V2) node[midway,xshift=0em, yshift=2.6em]{$t$};
				\draw [decorate,line width=1.25pt, decoration={calligraphic brace,amplitude=5pt,mirror,raise=1.5ex,}]
				(H1) -- (H3) node[midway,xshift=0em, yshift=-2.1em]{$r$};
				\draw [decorate,line width=1.25pt, decoration={calligraphic brace,amplitude=5pt,mirror,raise=1.5ex,}]
				(H4) -- (H6) node[midway,xshift=0em, yshift=-2.1em]{$s$};
			\end{tikzpicture}
		\end{gathered}
	\end{equation}
	for some integers $r,t\ge 0$ and $s\ge 1$ with $r+s=m$. Here the white vertex corresponds to the component $\mathcal{Z}_k$.

	Conversely,	let $r=2r'$, $s=2s'$, and $t=2t'+1$, where $r', t'\ge 0$, and $s'\ge 1$ are integers. Let $\Gamma$ be the graph with black vertices in \eqref{fig:dualgraph} associated to $r,s,t$ (remove the white vertex and the edge connected to it). Section $\reffourmidm$ in \cite{Lorenzini24} suggests that
	$\Gamma$ could be the dual graph of the intersection matrix associated with the resolution of a wild $\mathbb{Z}/2\mathbb{Z}$ quotient surface singularity. It is interesting to note that Theorem \ref{thm.supred} shows that such resolution of singularity occurs in the elliptic curve setting described above only when $r'+s'$ is even, since $r+s=4s_{L/K}-v(j)$ has to be divisible by $4$ in this case.
\end{remark}

Let $\mathcal{O}_K$ be as in \ref{em.intro}. Let $E_1/K$ and $E_2/K$ be elliptic curves with additive reduction. Assume that there is an isogeny of degree $d$ from $E_1/K$ to $E_2/K$. If $E_1/K$ and $E_2/K$ have potentially good ordinary reduction, Corollary 4.6 in \cite{Lorenzini13} shows that $E_1/K$ and $E_2/K$ have the same reduction type. In Corollary \ref{cor.sameIn}, we show that if $E_1/K$ and $E_2/K$ have potentially good supersingular reduction with $[L:K]=2$ and $v(2)=1$, then $E_1/K$ and $E_2/K$ have the same reduction type. However, if $v(2)>1$, $E_1/K$ and $E_2/K$ do not necessarily have the same reduction type (see Remark \ref{rk.counteregisog}). 

When $K$ is a finite extension of $\mathbb{Q}_2$ and $d$ is odd, T. Dokchitser and V. Dokchitser (\cite{Dokchitser-Dokchitser}, Theorem 5.4 (1)) showed that $E_1/K$ and $E_2/K$ have the same reduction type. When $d=2$, we prove the following.

\medskip

\begin{prorep}[\ref{prop.sametype2}]
	Let $\mathcal{O}_K$ be as in \ref{em.intro} with mixed characteristic $2$. Let $E_1/K$ and $E_2/K$ be elliptic curves and assume that there exists a $2$-isogeny from $E_1/K$ to $E_2/K$. Assume that $E_1/K$ and $E_2/K$ have additive reduction and good supersingular reduction after a quadratic extension $L/K$. Then $E_1/K$ and $E_2/K$ have the same Kodaira type if and only if either $\operatorname{min}(v(j(E_1)),v(j(E_2)))>4s_{L/K}-4$ or $v(j(E_1))=v(j(E_2))=6v(2)$.
	
\end{prorep}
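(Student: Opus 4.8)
The plan is to leverage Theorem \ref{thm.supred}, which gives a complete dictionary translating the invariant $v(j(E))$ (together with $s_{L/K}$) into the Kodaira type of a curve with potentially good supersingular reduction over a quadratic extension. Since both $E_1/K$ and $E_2/K$ satisfy the hypotheses of that theorem with the \emph{same} field $L$ (a $2$-isogeny is defined over $K$, so $L/K$ serves simultaneously for both curves, and $s_{L/K}$ is the same for both), the comparison of their Kodaira types reduces entirely to a comparison of $v(j(E_1))$ and $v(j(E_2))$ through the three cases of Theorem \ref{thm.supred}. Thus I would begin by recording that $s_{L/K}$ is a common invariant and restating the trichotomy: type is governed by the residue of $2s_{L/K}+3$ modulo $6$ in the ``large $v(j)$'' regime, and by the exact value $4s_{L/K}-v(j)$ in the ``small $v(j)$'' regime.

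\medskip

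First I would dispose of the easy direction. In the regime $v(j(E_i))>4s_{L/K}-4$, part (b) of Theorem \ref{thm.supred} shows that the Kodaira type depends \emph{only} on $f$, hence only on $s_{L/K}$, and not at all on the precise value of $v(j(E_i))$. Therefore if $\min(v(j(E_1)),v(j(E_2)))>4s_{L/K}-4$ then both curves fall in case (b) with the same $f$, forcing equal Kodaira types; this gives one of the two sufficient conditions. For the second sufficient condition, when $v(j(E_1))=v(j(E_2))=6v(2)$, I would check that this common value lands in whichever regime of Theorem \ref{thm.supred} applies and that equal $v(j)$ produces equal type there (in case (a) the type $\mathrm{I}^*_{4s_{L/K}-v(j)}$ depends only on $v(j)$ and $s_{L/K}$, so equality of $v(j)$ suffices). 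The arithmetic input $v(2)$ enters here precisely because in mixed characteristic $2$ the valuation of the difference $v(j(E_1))-v(j(E_2))$ is controlled by the degree-$2$ isogeny and the ramification of $2$.

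\medskip

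The substantive direction is the necessity (``only if''). Here I would analyze how a $2$-isogeny constrains the pair $(v(j(E_1)),v(j(E_2)))$. The key structural fact I expect to need is a relation between $j(E_1)$ and $j(E_2)$ coming from the modular/isogeny equation for $2$-isogenies, which pins down $v(j(E_1))-v(j(E_2))$ in terms of $v(2)$ and the reduction data. Using this relation together with the classification, I would argue that whenever the two curves are \emph{not} both in the large-$v(j)$ regime and do \emph{not} share the value $6v(2)$, the valuations $v(j(E_1))$ and $v(j(E_2))$ must differ in a way that case (a) of Theorem \ref{thm.supred} translates into distinct subscripts on $\mathrm{I}^*_{n}$, or else one curve sits in case (a) and the other in case (b), again yielding different types. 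The main obstacle will be establishing the precise constraint that a $2$-isogeny imposes on the two $j$-valuations: this requires understanding the local behavior of the isogeny on the formal/supersingular level in mixed characteristic $2$, where the wild ramification is concentrated, and showing that outside the two exceptional situations the isogeny cannot preserve both the regime and the exact invariant $4s_{L/K}-v(j)$. I would handle this by a case analysis on the valuation of the kernel and the Hasse invariant, reducing to the normalized Weierstrass models already used in the proof of Theorem \ref{thm.supred}, and carefully tracking how the $2$-isogeny acts on the minimal discriminant, which is where the role of $v(2)=1$ versus $v(2)>1$ becomes visible.
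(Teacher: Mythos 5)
Your overall architecture matches the paper's: both directions are reduced to the trichotomy of Theorem \ref{thm.supred}, using that $L/K$ and hence $s_{L/K}$ are common to the two curves. The sufficiency direction is handled correctly (and note that for the second sufficient condition you do not need any control on $v(j(E_1))-v(j(E_2))$ coming from the isogeny: once $v(j(E_1))=v(j(E_2))$, Theorem \ref{thm.supred} gives equal types regardless of which regime the common value lands in, so that paragraph of your plan is doing more work than necessary). The skeleton of the necessity direction is also right: if the types agree and one curve is in case (a), the other cannot be in case (b), so both are in case (a) and the equality of the subscripts on $\mathrm{I}^{\ast}_{n}$ forces $v(j(E_1))=v(j(E_2))$.

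The genuine gap is the final step of necessity: you must show that a $2$-isogeny with $v(j(E_1))=v(j(E_2))>0$ forces the common value to be $6v(2)$. You correctly flag that ``a relation between $j(E_1)$ and $j(E_2)$ coming from the modular/isogeny equation'' is needed and call it the main obstacle, but you neither state the precise constraint nor give a workable route to it. The methods you propose (formal groups, Hasse invariants, valuation of the kernel, behavior of the minimal discriminant) are not what delivers this; the paper proves it (Proposition \ref{prop.2isog}(i)) by a direct computation with the modular polynomial $\Phi_2(X,Y)$ and the explicit rational parametrization $t\mapsto ((t+16)^3/t,\,(t+256)^3/t^2)$ of the curve $\Phi_2=0$: a case analysis on $v(t)$ shows that the two coordinates have equal positive valuation only when $v(t)=6v(2)$, in which case both valuations equal $6v(2)$. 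Without this input your argument only yields the weaker statement that the types agree if and only if either $\min(v(j(E_1)),v(j(E_2)))>4s_{L/K}-4$ or $v(j(E_1))=v(j(E_2))\le 4s_{L/K}-4$, which is not the claimed characterization. To complete your proof you would need to supply Proposition \ref{prop.2isog}(i) or an equivalent statement.
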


\medskip

The reduction type under isogeny is further discussed in Section \ref{sec.isog}.

\begin{acknowledgement}
	This article forms part of the author’s doctoral thesis at the University of Georgia. The author would like to thank his advisor, Dino Lorenzini, 
	for many helpful suggestions and encouragement.
\end{acknowledgement}

\section{The reduction type}

In this section, our main goal is to establish Theorem \ref{thm.supred}. We begin by proving several preliminary results. Let $K$ be as in \ref{em.intro}. We recall the following facts about $s_{L/K}$ when $L/K$ is quadratic. Let $\sigma$ denote the generator of $\operatorname{Gal}(L/K)$.

\begin{lemma}\label{le.sl_k}
	Let $\mathcal{O}_K$ be as in \ref{em.intro}. Let $L/K$ be a quadratic extension.

	\begin{enumerate}[label=(\alph*)]
		\item Assume that $\operatorname{char}(K)=2$. Then $L/K$ is the splitting field of an Artin-Schreier polynomial $f(z)=z^2+z+D$ with $v_K(D)<0$ and odd. Moreover, $s_{L/K}=-v_K(D)$.
		\item Assume that $\operatorname{char}(K)=0$. Then $L=K(\sqrt{D})$ for $D\in K$ with $v_K(D)=0$ or $v_K(D)=1$. If $v_K(D)=1$, then $s_{L/K}=2v_K(2)$. If $v_K(D)=0$, then $L$ is the splitting field of an Eisenstein polynomial $g(z)=z^2+az+b\in K[z]$ where $1\le v_K(a)$, $v_K(b)=1$ and $a^2-4b=Dc^2$ for some $c\in \mathcal{O}_K^*$ with $v_K(c)=v_K(a)\le v_K(2)$. Moreover, $s_{L/K}=2v_K(a)-1$.
	\end{enumerate}
	
\end{lemma}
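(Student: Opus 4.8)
The plan is to reduce every assertion to a single computation, namely the ramification jump of the nontrivial automorphism $\sigma$. Since $k$ is algebraically closed there are no nontrivial unramified extensions, so a quadratic $L/K$ is automatically totally, and (as $\operatorname{char}(k)=2$) wildly, ramified; in particular $\mathcal{O}_L=\mathcal{O}_K[\pi_L]$ for any uniformizer $\pi_L$, and $i_{L/K}(\sigma):=v_L(\sigma(\pi_L)-\pi_L)$ is independent of the choice of $\pi_L$. Writing $i_0:=i_{L/K}(\sigma)$, one has $\sigma\in H_i$ exactly when $v_L(\sigma(\pi_L)-\pi_L)\ge i+1$, so $H_i=H$ for $0\le i\le i_0-1$ and $H_i=1$ for $i\ge i_0$. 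Hence $\sum_{i\ge 0}(|H_i|-1)=i_0$, and the defining relation for $s_{L/K}$ gives $s_{L/K}=i_0-1=v_L(\sigma(\pi_L)-\pi_L)-1$ (equivalently $i_0=v_L(\mathfrak{d}_{L/K})$, cf. \cite{Serre} IV.2, Proposition 4). Each claim below is then obtained by exhibiting a convenient uniformizer $\pi_L$ and reading off $v_L(\sigma(\pi_L)-\pi_L)$.

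For part (a), a separable quadratic extension in characteristic $2$ is Artin--Schreier: $L=K(\alpha)$ with $\alpha^2+\alpha=D$, and $L$ depends only on the class of $D$ in $K/\wp(K)$, where $\wp(z)=z^2+z$. First I would normalize $D$. If $v_K(D)\ge 0$ then, $k$ being algebraically closed, $z^2+z+\bar D$ has a root in $k$, and since $z^2+z+D$ is separable Hensel's lemma lifts it to $\mathcal{O}_K$; thus $D\in\wp(\mathcal{O}_K)$ and the class is trivial. So for a nontrivial extension $v_K(D)<0$. If $v_K(D)=-2n$ is even, write the leading coefficient of $D$ as a square (possible since $k$ is perfect) and subtract a suitable $\wp(c\pi^{-n})=c^2\pi^{-2n}+c\pi^{-n}$ to cancel the leading term and strictly raise $v_K(D)$; iterating, and using that the class stays nontrivial so the valuation can never reach $\ge 0$, the strictly increasing (hence terminating) sequence of valuations must land on an odd negative value $-s$. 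With $v_K(D)=-s$, $s$ odd, one checks $v_L(\alpha)=-s$ and that $\pi_L:=\pi^{(s+1)/2}\alpha$ is a uniformizer of $\mathcal{O}_L$. Since $\sigma(\alpha)=\alpha+1$, we get $\sigma(\pi_L)-\pi_L=\pi^{(s+1)/2}$, of valuation $s+1$, whence $s_{L/K}=s=-v_K(D)$.

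For part (b) we are in the Kummer situation $L=K(\sqrt D)$, and $L$ depends only on the class of $D$ in $K^*/(K^*)^2$; multiplying by an even power of $\pi$ normalizes $v_K(D)\in\{0,1\}$. If $v_K(D)=1$ then $\alpha:=\sqrt D$ already has $v_L(\alpha)=1$, so $\pi_L=\alpha$, and $\sigma(\pi_L)-\pi_L=-2\alpha$ has valuation $2v_K(2)+1$, giving $s_{L/K}=2v_K(2)$. If $v_K(D)=0$, pick any uniformizer $\pi_L$ of $\mathcal{O}_L$; its minimal polynomial is Eisenstein, $g(z)=z^2+az+b$ with $v_K(a)\ge 1$ and $v_K(b)=1$, and $L=K(\sqrt{a^2-4b})$ forces $a^2-4b=Dc^2$ for some $c$ with $v_K(c)=v_K(a)$. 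The key parity observation is that $v_K(a^2)=2v_K(a)$ is even while $v_K(4b)=2v_K(2)+1$ is odd, so $v_K(a^2-4b)=\min(2v_K(a),\,2v_K(2)+1)$; since this equals $v_K(Dc^2)=2v_K(c)$ and is therefore even, the minimum must be $2v_K(a)$, yielding $v_K(a)\le v_K(2)$. Finally $\sigma(\pi_L)-\pi_L=-a-2\pi_L$, and since $v_L(a)=2v_K(a)\le 2v_K(2)<2v_K(2)+1=v_L(2\pi_L)$ the valuation of this difference is $2v_K(a)$, so $s_{L/K}=2v_K(a)-1$.

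The routine inputs---that a totally ramified prime-degree extension is generated by a uniformizer with Eisenstein minimal polynomial, and the standard ramification-group bookkeeping---I would cite from \cite{Serre}. The genuinely delicate steps, and where I expect to spend the most care, are the two normalizations forced by wild ramification: in (a), the descent of $D$ to odd negative valuation via $\wp$ (exactly where $k$ algebraically closed/perfect enters, and where one must argue that the iteration terminates); and in (b) with $v_K(D)=0$, the parity argument pinning down $v_K(a)\le v_K(2)$, which is what makes the formula $s_{L/K}=2v_K(a)-1$ well posed independently of the chosen Eisenstein presentation.
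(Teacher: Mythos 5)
Your proof is correct and follows essentially the same route as the paper: normalize the generator ($D$ to odd negative valuation via Artin--Schreier adjustments in characteristic $2$; $v_K(D)\in\{0,1\}$ and an Eisenstein presentation in characteristic $0$), then read off $s_{L/K}$ from $v_L(\sigma(\pi_L)-\pi_L)$ for an explicitly chosen uniformizer. The only difference is cosmetic: in part (a) you take $\pi_L=\pi^{(s+1)/2}\alpha$, so that $\sigma(\pi_L)-\pi_L=\pi^{(s+1)/2}$ on the nose, whereas the paper uses $\pi_K^m\alpha^n$ with $2m-sn=1$ and must expand $(\alpha+1)^n-\alpha^n$; your choice is the special case $n=1$ and is slightly cleaner.
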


\begin{proof}
	(a) The quadratic extension $L/K$ can be given by an Artin-Schreier polynomial $f(z)=z^2+z+D$ since $\operatorname{char}(K)=2$. We know that $r:=-v_K(D)>0$ since the residue field is algebraically closed and hence $f(z)$ is reducible in $\mathcal{O}_K[z]$ when $r\le 0$ by Hensel's Lemma. Now we show that $r$ can be assumed to be odd. Suppose that $D=\frac{a}{\pi^{2s}}$ with $v_K(a)=0$. Since the residue field is assumed to be algebraic closed, we can choose $b\in \mathcal{O}_K$ such that $v_K(b^2+a)>0$. Make the change of variable $z=Z+\frac{b}{\pi^s}$ to $f(z)$, and we get that $Z^2+Z+\frac{b\pi^s+b^2+a}{\pi^{2s}}$ with $v_K(\frac{b\pi^s+b^2+a}{\pi^{2s}})>-2s$. If $v_K(\frac{b\pi^s+b^2+a}{\pi^{2s}})$ is done, we are done. Otherwise, repeat the process, and we will eventually get a polynomial such that the constant term has negative odd valuation.
	
	Let $v_L$ be the normalized valuation in $L$. In particular, we have that $v_L(c)=2v_K(c)$ for each $c\in K$. Let $\alpha$ be a root of $f(z)$. Then $v_L(\alpha)=-r$. Because $r$ is odd, there exist $m,n\in\mathbb{N}_{\ge 1}$ such that $2m-rn=1$. Notice that $n$ is odd. Take $\pi_L:=\pi_K^m\alpha^n$. Then $v_L(\pi_L)=1$ and $\pi_L$ is a uniformizer of $\mathcal{O}_L$. It follows that $\sigma(\pi_L)-\pi_L=\pi_K^m(\alpha+1)^n-\pi_K^m\alpha^n=\pi_K^m({n\choose 1}\alpha^{n-1}+{n\choose 2}\alpha^{n-2}+\cdots +1)$. The term ${n\choose i}$ either is $0$ in $K$ or has valuation $0$ for $1\le i\le n$. Because $n$ is odd, the term ${n\choose 1}=n$ has valuation $0$ and so the sum has valuation $(n-1)v_L(\alpha)$. Hence, $v_L(\sigma(\pi_L)-\pi_L)=2m-(n-1)r=r+1$. Therefore $s_{L/K}=-1+\sum\limits_{i=0}^{\infty}(|H_i|-1)=r$.
	
	(b) When $\operatorname{char}(K)=0$, we know that $L=K(\sqrt{D})$ for some $D\in K$. Moreover, we can assume that $v(D)=0$ or $1$. Indeed, if $v(D)$ is even, then $L=K(\sqrt{D\pi^{-v(D)}})$. If $v(D)$ is odd, then $L=K(\sqrt{D\pi^{-v(D)+1}})$. 
	
	If $v_K(D)=1$, then $s_{L/K}=2v_K(2)$. Indeed, $\sqrt{D}$ is a uniformizer in $\mathcal{O}_L$. Let $v_L$ be the normalized valuation of $L$. Then $v_L(\sigma(\sqrt{D})-\sqrt{D})=v_L(-2\sqrt{D})=2v_K(2)+1$. So $s_{L/K}=-1+\sum\limits_{i=0}^{\infty}(|H_i|-1)=2v_K(2)$.

	If $v_K(D)=0$, the extension $L/K$ can be given by an Eisenstein polynomial $g(z)=z^2+az+b$ with $v_K(a)\ge 1$ and $v_K(b)=1$, because $L/K$ is totally ramified. It follows that $a^2-4b=Dc^2$ for some $c\in K$ with $v_K(c)\ge 0$. Then $\operatorname{min}(2v(a),v(b)+2v(2))$ has to be $2v(a)$, so $v_K(a)=v_K(c)\le v_K(2)$.
	
	Let $\pi_L$ be a root of $g(z)$. Then $\pi_L$ is a uniformizer for $\mathcal{O}_L$. It follows that $v_L(\sigma(\pi_L)-\pi_L)=v_L(-2\pi_L-a)=v_L(a)=2v_K(a)$. So $s_{L/K}=-1+\sum\limits_{i=0}^{\infty}(|H_i|-1)=2v_K(a)-1$.	
\end{proof}

\begin{lemma}\label{le.slkbound}
	Let $\mathcal{O}_K$ be as in \ref{em.intro} with mixed characteristic $2$. Let $L/K$ be a quadratic extension. If $s_{L/K}$ is even, then $s_{L/K}=2v(2)$. If $s_{L/K}$ is odd, then $1\le s_{L/K}\le 2v(2)-1$. Conversely, let $1\le s\le 2v(2)-1$ with $s$ odd or $s=2v(2)$. Then there exists a quadratic extension $L/K$ such that $s_{L/K}=s$.
\end{lemma}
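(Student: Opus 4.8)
The plan is to deduce everything from the structural classification of quadratic extensions in mixed characteristic recorded in Lemma \ref{le.sl_k}(b). That lemma says that, after adjusting by squares, every quadratic $L/K$ has the form $K(\sqrt{D})$ with either $v_K(D)=1$ or $v_K(D)=0$, and it supplies the exact value of $s_{L/K}$ in each case. So both implications of the first assertion reduce to a parity bookkeeping over these two cases, and the converse reduces to exhibiting explicit extensions.

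For the forward direction I would first record the two possibilities. If $v_K(D)=1$, then Lemma \ref{le.sl_k}(b) gives $s_{L/K}=2v(2)$, which is even. If $v_K(D)=0$, the same lemma presents $L$ as the splitting field of an Eisenstein polynomial $z^2+az+b$ with $1\le v_K(a)\le v_K(2)$ and gives $s_{L/K}=2v_K(a)-1$, which is odd and lies in the interval $[1,\,2v(2)-1]$ as $v_K(a)$ ranges over $[1,\,v(2)]$. Hence an even value of $s_{L/K}$ can only arise from the first case, forcing $s_{L/K}=2v(2)$, while an odd value can only arise from the second, forcing $1\le s_{L/K}\le 2v(2)-1$. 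This settles both implications.

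For the converse I would realize each admissible value by an explicit extension. When $s=2v(2)$, take $L=K(\sqrt{\pi})$: here $v_K(\pi)=1$ and $\sqrt{\pi}\notin K$ for valuation reasons, so Lemma \ref{le.sl_k}(b) gives $s_{L/K}=2v(2)$. When $s$ is odd with $1\le s\le 2v(2)-1$, write $s=2t-1$ with $1\le t\le v(2)$ and let $L$ be generated by a root of $g(z)=z^2+\pi^t z+\pi$. Since $v_K(\pi^t)=t\ge 1$ and the constant term has valuation exactly $1$, the polynomial $g$ is Eisenstein, hence irreducible, and $L/K$ is totally ramified of degree $2$. Applying Lemma \ref{le.sl_k}(b) with $a=\pi^t$ and $b=\pi$, and noting $v_K(a)=t\le v(2)$, yields $s_{L/K}=2t-1=s$.

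The one point that needs care, and the only place where the mixed-characteristic hypothesis genuinely enters, is checking that the constructed $g$ really falls under the $v_K(D)=0$ branch of Lemma \ref{le.sl_k}(b) rather than being a disguised square or secretly a $v_K(D)=1$ extension. I would verify this by computing the discriminant $a^2-4b=\pi^{2t}-4\pi$, whose valuation is $\min(2t,\,2v(2)+1)=2t$ precisely because $t\le v(2)$; since this valuation is even, $L=K(\sqrt{\pi^{2t}-4\pi})$ equals $K(\sqrt{D})$ with $v_K(D)=0$, so the hypotheses of the relevant branch are met and the quoted formula for $s_{L/K}$ applies. This is the only step beyond routine bookkeeping, and once it is in place the lemma follows.
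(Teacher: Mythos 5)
Your proof is correct and follows essentially the same route as the paper's: both directions are read off from Lemma \ref{le.sl_k}(b), and the converse is realized by the same explicit extensions, namely the splitting fields of $z^2-\pi_K$ and of $z^2+\pi_K^{(s+1)/2}z+\pi_K$. Your extra check that the discriminant $\pi^{2t}-4\pi$ has even valuation, so that the construction genuinely lands in the $v_K(D)=0$ branch, is a worthwhile detail the paper leaves implicit, but it does not change the argument.
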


\begin{proof}
	Lemma \ref{le.sl_k} (b) shows that if $s_{L/K}$ is even, then $s_{L/K}=2v(2)$ and that if $s_{L/K}$ is odd, then $s_{L/K}=2v(a)-1\le 2v(2)-1$, where the element $a$ is as defined in the lemma. If $s=2v(2)$, let $L_s$ be the splitting field of $z^2-\pi_K$. If $1\le s\le 2v(2)-1$ and $s$ is odd, let $L_s$ be the splitting field of $z^2+\pi_K^{(s+1)/2}z+\pi_K$. In both cases, Lemma \ref{le.sl_k} (b) shows that we have that $s_{L_s/K}=s$. 
\end{proof}

\begin{lemma}\label{le.supersing-va1-v2}
	Let $\mathcal{O}_K$ be as in \ref{em.intro}. Let $E/K$ be an elliptic curve with a minimal Weierstrass equation $y^2+a_1 xy+a_3 y=x^3+a_2x^2+a_4x+a_6$ where $a_i\in\mathcal{O}_K$. Let $j:=j(E)$. Then the following is true.

	\begin{enumerate}[label=(\alph*)]
		\item The elliptic curve $E/K$ has good supersingular reduction if and only if $a_3\in \mathcal{O}_K^*$ and $a_1\in \pi\mathcal{O}_K$. 
		\item Assume that $E/K$ has good reduction and $v(j)< 12v(2)$. Then $v(a_1)< v(2)$ and $v(j)=12v(a_1)$.
	\end{enumerate}

\end{lemma}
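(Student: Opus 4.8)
My plan is to reduce both parts to the standard quantities associated with the given Weierstrass equation and to use repeatedly that the residue field has characteristic $2$. Write $b_2=a_1^2+4a_2$, $b_4=a_1a_3+2a_4$, $c_4=b_2^2-24b_4$, and $j=c_4^3/\Delta$, so that in particular
\[
c_4=a_1^4+8a_1^2a_2+16a_2^2-24a_1a_3-48a_4 .
\]
Since the displayed Weierstrass equation is minimal, $E/K$ has good reduction if and only if $v(\Delta)=0$, equivalently $\overline{\Delta}\neq 0$ in the residue field $k$; I will invoke this in both parts. I will also use the classical fact that over a field of characteristic $2$ an elliptic curve is supersingular if and only if its $j$-invariant is $0$.

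For part (a) I would reduce modulo $\pi$. In characteristic $2$ one gets $\overline{c}_4=\overline{a}_1^4$, hence $\overline{\jmath}=\overline{a}_1^{12}/\overline{\Delta}$. First suppose $E/K$ has good supersingular reduction: then $\overline{\Delta}\neq 0$, and supersingularity gives $\overline{\jmath}=0$, forcing $\overline{a}_1=0$, i.e. $a_1\in\pi\mathcal{O}_K$. Substituting $\overline{a}_1=0$ into the reduction of $\Delta$ collapses all the $b_2$- and $b_4$-terms and leaves $\overline{\Delta}=\overline{a}_3^4$; since $\overline{\Delta}\neq 0$ this yields $\overline{a}_3\neq 0$, i.e. $a_3\in\mathcal{O}_K^*$. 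Conversely, if $a_3\in\mathcal{O}_K^*$ and $a_1\in\pi\mathcal{O}_K$, then $\overline{a}_1=0$ gives $\overline{\Delta}=\overline{a}_3^4\neq 0$ (good reduction) and $\overline{c}_4=\overline{a}_1^4=0$, so $\overline{\jmath}=0$ (supersingular). Assembling the two directions gives the stated equivalence.

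For part (b), good reduction gives $v(\Delta)=0$, so $v(j)=3v(c_4)$, and it suffices to compute $v(c_4)$ from the displayed expansion. Setting $e:=v(2)$, the four correction monomials have valuations at least $3e+2v(a_1)$, $4e$, $3e+v(a_1)$ and $4e$ respectively, while the leading term $a_1^4$ has valuation $4v(a_1)$. The plan is a dichotomy around the threshold $v(a_1)=e$. If $v(a_1)\ge e$, then every monomial of $c_4$ has valuation $\ge 4e$, so $v(c_4)\ge 4e$ and $v(j)\ge 12v(2)$, contradicting the hypothesis $v(j)<12v(2)$; hence $v(a_1)<v(2)$. Once $v(a_1)<e$ is known, each of the four correction terms has valuation strictly larger than $4v(a_1)$ (the inequalities $4v(a_1)<3e+v(a_1)$, $4v(a_1)<4e$, and $4v(a_1)<3e+2v(a_1)$ all follow from $v(a_1)<e$), so the leading term dominates and $v(c_4)=4v(a_1)$, giving $v(j)=3v(c_4)=12v(a_1)$.

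The routine but delicate point — and the main obstacle — is the valuation bookkeeping in (b): one must track the powers of $2$ hidden in the coefficients $8,16,24,48$ and verify that the threshold $v(a_1)=v(2)$ is exactly where the monomial $a_1^4$ ceases to be the strictly smallest, so that the two cases patch together cleanly. In part (a) the only nontrivial input is the characteristic-$2$ criterion that supersingularity is equivalent to $j=0$, together with the computation $\overline{\Delta}=\overline{a}_3^4$ once $\overline{a}_1=0$.
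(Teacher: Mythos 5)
Your proposal is correct and follows essentially the same route as the paper: both parts reduce to computing $v(c_4)$ and $v(\Delta)$ and use that supersingularity in residue characteristic $2$ is equivalent to $\bar{\jmath}=0$; your part (b) is the paper's argument verbatim, and your part (a), though phrased as a computation of $\bar{c}_4=\bar{a}_1^4$ and $\bar{\Delta}=\bar{a}_3^4$ in the residue field rather than as the paper's chain of valuation identities through $b_2$ and $b_6$, is the same calculation in different notation. No gaps.
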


\begin{proof}
	(a) Let $b_i,c_i,\Delta$ and $j$ be the elements in $K$ associated to the given Weierstrass equation as discussed in \cite{SilvermanArith} III.1, page 42. Assume that $E/K$ has good supersingular reduction. Since $\operatorname{char}(k)=2$, we can deduce that $v(j)>0$. Because $E/K$ has good reduction, $v(\Delta)=0$. Hence $v(c_4)=v(j\Delta)/3>0$. It follows that $v(b_2)=v(c_4+24b_4)/2>0$ and therefore $v(a_1)=v(b_2-4a_2)/2>0$. Because $v(\Delta)=0$, it follows that $v(b_6)=v(-b_2^2 b_8-8b_4^3+9b_2 b_4 b_6-\Delta)/2=0$. Hence $v(a_3)=v(b_6-4a_6)/2=0$.
	
	Conversely, assume that $a_3\in \mathcal{O}_K^*$ and $a_1\in \pi\mathcal{O}_K$. 
	It follows that $v(b_2)=v(a_1^2+4a_2)>0$ and $v(b_6)=v(a_3^2+4a_6)=0$. Hence $v(c_4)=v(b_2^2-24b_4)>0$ and $v(\Delta)=v(-b_2^2 b_8-8b_4^3-27b_6^2+9b_2 b_4 b_6)=0$. Thus $v(j)=3v(c_4)-v(\Delta)>0$. Therefore $E/K$ has good supersingular reduction.
	
	(b) Assume that $v(a_1)\ge v(2)$. Then $v(j)=3v(c_4)=3v(a_1^4+8a_1^2a_2+16a_2^2-48a_4-24a_1a_3)\ge 12v(2)$, a contradiction. So $v(a_1)< v(2)$. It follows that $v(j)=3v(c_4)=12v(a_1)$.
\end{proof}

\begin{remark}\label{rk.gt12}
	Let $\mathcal{O}_K$ be a discrete valuation ring with quotient field of characteristic zero and algebraically closed residue field of characteristic $\ell>0$. Let $v$ be the normalized valuation on $K$. Let $E/K$ and $E'/K$ be elliptic curves with good supersingular reduction and let $j:=j(E)$ and $j':=j(E')$. Corollary 2.4 in \cite{Gross-Zagier} implies that $v(j-j')\ge 12$ when $\ell=2$. Lemma \ref{le.supersing-va1-v2} shows that we actually have $v(j)\ge 12$ and $v(j')\ge 12$. Indeed, if $v(j)\ge 12v(2)$, then it is clear that $v(j)\ge 12$. Assume that $v(j)< 12v(2)$. Let $y^2+a_1 xy+a_3 y=x^3+a_2x^2+a_4x+a_6$ with $a_i\in\mathcal{O}_K$ be a minimal Weierstrass equation for $E/K$. Lemma \ref{le.supersing-va1-v2} (b) shows that $v(j)=12v(a_1)$ and Lemma \ref{le.supersing-va1-v2} (a) shows that $v(a_1)\ge 1$. Therefore $v(j)\ge 12$. Similarly, $v(j')\ge 12$.
\end{remark}

\begin{proposition}\label{prop.minext-twist}
	
	Let $\mathcal{O}_K$ be as in \ref{em.intro}. Let $E/K$ be an elliptic curve with additive reduction and good supersingular reduction after a quadratic extension $L/K$. Let $j:=j(E)$. Then the following is true.
	\begin{enumerate}[label=(\alph*)]
		\item Let $E'/K$ be the twist of $E/K$ by the quadratic extension $L/K$. Then $E'/K$ has good supersingular reduction.
		\item If $v(j)\le 4s_{L/K}-4$, then $v(j)<12v(2)$, $v(2)>1$ and $12\mid v(j)$.
	\end{enumerate}
\end{proposition}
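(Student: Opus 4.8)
The plan is to establish part~(a) first and then deduce part~(b) from it.

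For part~(a), I would argue through the $\ell$-adic Galois representation $\rho_\ell\colon G_K\to\operatorname{Aut}(T_\ell E)\cong\mathrm{GL}_2(\mathbb{Z}_\ell)$ for a fixed odd prime $\ell$ (which exists in both the equal- and mixed-characteristic cases). Since $k$ is algebraically closed, $K$ is strictly Henselian, so $G_K=I_K$, and as $\ell\neq 2=\operatorname{char}(k)$ all $\ell$-power roots of unity already lie in $K$; hence the cyclotomic character is trivial and $\det\rho_\ell=1$, i.e.\ $\rho_\ell$ takes values in $\mathrm{SL}_2(\mathbb{Z}_\ell)$. Because $E$ has additive reduction over $K$ but good reduction over the quadratic extension $L$, the field $L/K$ is the minimal field of good reduction, and the N\'eron--Ogg--Shafarevich criterion identifies $\rho_\ell(G_K)$ with $\operatorname{Gal}(L/K)$, a group of order $2$. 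Its nontrivial element $\rho_\ell(\sigma)$ is then an order-$2$ element of $\mathrm{SL}_2(\mathbb{Z}_\ell)$; being semisimple (as $\ell\neq 2$) with eigenvalues in $\{\pm1\}$ of product $1$, it must equal $-I$. Writing $E'=E\otimes\chi$ for the twist by the quadratic character $\chi$ of $L/K$ --- equivalently the twist by $[-1]$ through the class in $\operatorname{Hom}(G_K,\mathbb{Z}/2)$ cutting out $L$ --- we get $\rho_\ell^{E'}=\rho_\ell^{E}\otimes\chi$, so $\rho_\ell^{E'}(\sigma)=(-I)(-1)=I$ and $\rho_\ell^{E'}$ is trivial on all of $G_K$. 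Thus $E'/K$ has good reduction by N\'eron--Ogg--Shafarevich, and since $\overline{j(E')}=\overline{j(E)}$ equals the unique supersingular $j$-invariant $0$ in characteristic $2$, this reduction is supersingular.

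For part~(b), I would transfer everything to $E'$, which by part~(a) has good supersingular reduction over $K$ and satisfies $j(E')=j$. In mixed characteristic, Remark~\ref{rk.gt12} gives $v(j)\ge 12$; combined with the hypothesis $v(j)\le 4s_{L/K}-4$ this forces $s_{L/K}\ge 4$, and then the inequality $s_{L/K}\le 2v(2)$ recorded in Lemma~\ref{le.slkbound} yields $v(2)\ge 2$, that is $v(2)>1$. The same bound gives $v(j)\le 4s_{L/K}-4\le 8v(2)-4<12v(2)$, so $v(j)<12v(2)$. Applying Lemma~\ref{le.supersing-va1-v2}(b) to $E'/K$ then produces a coefficient $a_1'$ of a minimal Weierstrass equation with $v(j)=12\,v(a_1')$, whence $12\mid v(j)$. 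In equal characteristic $2$ the first two assertions are vacuous (since $v(2)=\infty$), and $12\mid v(j)$ follows directly from the identity $c_4=(a_1')^4$.

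I expect the crux to be part~(a), namely excluding the case where $\rho_\ell(\sigma)$ is a non-scalar involution with eigenvalues $+1,-1$: such a $\sigma$ would not be undone by twisting, and $E'$ would retain additive reduction. The residue characteristic is precisely what rules this out, through the vanishing of the cyclotomic character and the resulting constraint $\det\rho_\ell(\sigma)=1$, which pins $\rho_\ell(\sigma)$ to $-I$. A secondary point to handle carefully is the meaning of the quadratic twist in equal characteristic $2$, where it must be realized through $[-1]$ and the Artin--Schreier character rather than through $\mu_2$; its effect on $T_\ell E$ is nonetheless multiplication by $\chi$, so the computation is uniform across both cases. With (a) in hand, part~(b) is a matter of assembling Remark~\ref{rk.gt12}, Lemma~\ref{le.slkbound}, and Lemma~\ref{le.supersing-va1-v2}(b), and should present no real difficulty.
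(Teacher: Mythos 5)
Your proof is correct, but part (a) takes a genuinely different route from the paper. The paper argues geometrically: it forms the Weil restriction $\operatorname{Res}_{L/K}(E_L/L)$, which is isogenous over $K$ to $E\times E'$, and uses the fact that the abelian rank of the special fiber of the N\'eron model is $1$ and is an isogeny invariant, so that additivity of $E/K$ forces $E'/K$ to have good reduction. You instead work with $T_\ell E$ for odd $\ell$: since $G_K=I_K$ and $\det\rho_\ell$ is the (here trivial) cyclotomic character, the image $\rho_\ell(G_K)\cong\operatorname{Gal}(L/K)$ is generated by an order-$2$ element of $\mathrm{SL}_2(\mathbb{Z}_\ell)$, which must be $-I$, and twisting by the quadratic character of $L/K$ kills it; N\'eron--Ogg--Shafarevich then gives good reduction of $E'$, and $j\equiv 0$ in residue characteristic $2$ gives supersingularity. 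Both arguments are valid in the stated generality (Henselian base, algebraically closed residue field, and the twist realized by the cocycle valued in $\{\pm1\}\subset\operatorname{Aut}(E)$, which is what the paper's explicit twisted equations \eqref{eq.twequiv} and \eqref{eq.tw} implement). Your approach has the virtue of isolating exactly where the hypotheses enter: the only thing that could obstruct the conclusion is a non-scalar involution in the image, and the vanishing of the cyclotomic character rules it out; the paper's approach avoids Tate modules altogether and transfers the burden to the isogeny invariance of the abelian rank. For part (b) you assemble the same ingredients as the paper (Lemma \ref{le.slkbound} for $v(j)\le 4s_{L/K}-4\le 8v(2)-4<12v(2)$, then Lemma \ref{le.supersing-va1-v2}(b) applied to $E'$ for $12\mid v(j)$); the only difference is that you obtain $v(2)>1$ from Remark \ref{rk.gt12} and $s_{L/K}\le 2v(2)$, whereas the paper reads it off from $v(2)>v(a_1)\ge 1$ in Lemma \ref{le.supersing-va1-v2} -- both are fine, and your separate treatment of the equicharacteristic case (where $c_4=a_1^4$ gives $v(j)=12v(a_1)$ directly) is also correct.
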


\begin{proof}
	(a) We use the ideas in the proof of Lemma 2.6 in \cite{Melistas}. Consider the Weil restriction $A/K:=\operatorname{Res}_{L/K}(E_L/L)$. Then $A/K$ is an abelian variety of dimension $2$ and is isogenous over $K$ to $E/K\times E'/K$ where $E'/K$ is the quadratic twist of $E/K$ by the extension $L/K$ (see Theorem 4.5 in \cite{Mazur-Rubin-Siverberg}). Remark on p.179 of \cite{Milne} shows that the abelian rank of $A/K$ is $1$. Because abelian rank is an isogeny invariant (see \cite{Grothendieck}, IX, Cor. 2.2.7), the abelian rank of $E/K\times E'/K$ is $1$. Recall that taking the Néron model commutes with the product operation. So, the assumption that $E/K$ has additive reduction implies that $E'/K$ has good reduction. Since $E_L/L$ has good supersingular reduction, $E'/K$ has good supersingular reduction.

	(b) Let $E'/K$ be the twist of $E/K$ by the quadratic extension $L/K$. Then $E'/K$ has good supersingular reduction by part (a). Let $y^2+a_1 xy+a_3 y=x^3+a_2x^2+a_4x+a_6$ with $a_i\in\mathcal{O}_K$ be a minimal Weierstrass equation for $E'/K$. By Lemma \ref{le.slkbound}, the condition $v(j)\le 4s_{L/K}-4$ implies that $v(j)<12v(2)$. Lemma \ref{le.supersing-va1-v2} (a) and (b) show that $v(2)>v(a_1)\ge 1$ and $12\mid v(j)$. 
\end{proof}

\begin{emp}\label{em.proof.red}
	Now we are ready to prove Theorem \ref{thm.supred}.
\end{emp}		

\begin{remark}\label{rk.notation}
	In the proof of Theorem \ref{thm.supred}, we need several changes of variables to equations in the variables $x$ and $y$. By abuse of notation, we will continue to use $x$ and $y$ for the variables in the new equation.
\end{remark}

\begin{proof}[Proof of Theorem \ref{thm.supred}]

	Let $E'/K$ be the quadratic twist of $E/K$ by the extension $L/K$. By Proposition \ref{prop.minext-twist}, we know that $E'/K$ has good supersingular reduction. Assume that $E'/K$ has a minimal model
	\begin{equation}\label{eq.completeWE}
		y^2+a_1 xy+a_3 y=x^3+a_2x^2+a_4x+a_6
	\end{equation}
	with $a_i\in\mathcal{O}_K$. After a translation of $x$, we can assume that $a_2=0$ and that the new equation is still minimal (we use here that $\operatorname{char}(K)\ne 3$). Lemma \ref{le.supersing-va1-v2}(a) shows that $a_3\in \mathcal{O}_K^*$ and $a_1\in \pi\mathcal{O}_K$ since $E'/K$ has good supersingular reduction. In the following, we use Tate's algorithm (see \cite{Tate} p. 47-52, or \cite{SilvermanAd}, IV 9.4) to determine the reduction type of $E/K$.

	{\it We work with the equicharacteristic $2$ case first}. By Lemma \ref{le.sl_k} (a), the extension $L/K$ can be given by an Artin-Schreier polynomial $f(x)=x^2+x+D$ for some $D\in K$ such that $s_{L/K}=-v(D)>0$ and $s_{L/K}$ is odd. Then $E/K$ can be given by the following equation:
	\begin{equation}\label{eq.twequiv}
		y^2+a_1xy+a_3y=x^3+Da_1^2x^2+a_4x+a_6+Da_3^2
	\end{equation}
	(see \cite{Connell} 4.3, page 410). In cases (a) and (b) below, we will choose a certain $t$ and consider the following dilation of Equation \eqref{eq.twequiv}: 
	\begin{equation}\label{eq.minequi}
		y^2+a_1\pi^txy+a_3\pi^{3t}y=x^3+Da_1^2 \pi^{2t}x^2+a_4 \pi^{4t}x+(a_6+Da_3^2)\pi^{6t}.
	\end{equation}

	\begin{claim}\label{cl.equi}
		$v(j)\le 4s_{L/K}-4$ if and only if $s_{L/K}-3v(a_1)\ge 1$.
	\end{claim}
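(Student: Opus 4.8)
The plan is to collapse the claimed equivalence to the single valuation identity $v(j)=12\,v(a_1)$, after which the statement becomes elementary arithmetic. First I would use that the $j$-invariant is a twist-invariant, so that $j(E)=j(E')$ and hence $v(j)=v\big(j(E')\big)$; it therefore suffices to read off the valuation of the $j$-invariant from the good-supersingular model \eqref{eq.completeWE} of $E'/K$ (with $a_2=0$).

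Next I would compute $c_4$ for that model. Since we are in the equicharacteristic $2$ setting, $2=0$ in $K$, so $24=0$ as well, and with $a_2=0$ the formula $c_4=b_2^2-24b_4$ reduces to $c_4=a_1^4$. Thus $v(c_4)=4\,v(a_1)$. As $E'/K$ has good reduction, the minimal discriminant $\Delta$ of \eqref{eq.completeWE} is a unit, so $v(\Delta)=0$, giving $v(j)=3v(c_4)-v(\Delta)=12\,v(a_1)$. This agrees with Lemma \ref{le.supersing-va1-v2}(b), which applies whenever $v(j)<12v(2)$; in equicharacteristic $2$ that hypothesis is automatic for $a_1\neq 0$, while the degenerate case $a_1=0$ yields $v(j)=\infty=12\,v(a_1)$ under the convention $v(0)=\infty$, so the identity holds without exception.

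Finally I substitute $v(j)=12\,v(a_1)$ into the inequality: $v(j)\le 4s_{L/K}-4$ is equivalent to $12\,v(a_1)\le 4s_{L/K}-4$, i.e.\ $3\,v(a_1)\le s_{L/K}-1$, which is exactly $s_{L/K}-3v(a_1)\ge 1$. This settles both directions at once, and the boundary case $a_1=0$ renders the two sides false simultaneously. The crux is therefore the identity $v(j)=12\,v(a_1)$; once it is in hand the equivalence is purely formal. The only points demanding care in establishing it are the twist-invariance of $j$ and the characteristic-$2$ collapse $c_4=a_1^4$, together with the bookkeeping of the degenerate valuation $v(a_1)=\infty$, all of which are routine.
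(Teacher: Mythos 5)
Your proof is correct and follows essentially the same route as the paper: the paper also reduces the claim to the identity $v(j)=12v(a_1)$ (by citing Lemma \ref{le.supersing-va1-v2}(b) for $E'/K$ rather than recomputing $c_4=a_1^4$ inline) and then performs the same arithmetic rearrangement. Your explicit treatment of the degenerate case $a_1=0$ is a small extra point of care not spelled out in the paper, but the argument is otherwise the same.
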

	\begin{proof}[Proof of Claim \ref{cl.equi}]
		Since $E'/K$ has good reduction and $\operatorname{char}(K)=2$, we get that $v(j(E'))=12v(a_1)$ by applying Lemma \ref{le.supersing-va1-v2} (b) to $E'/K$. Thus $v(j)=12v(a_1)$. Therefore $v(j)\le 4s_{L/K}-4$ $\Longleftrightarrow$ $12v(a_1)\le 4s_{L/K}-4$ $\Longleftrightarrow$ $s_{L/K}-3v(a_1)\ge 1$. So Claim \ref{cl.equi} is true.
	\end{proof}

	Assume that we are in case (a) where $v(j)\le 4s_{L/K}-4$. Then $s_{L/K}-3v(a_1)\ge 1$ by Claim \ref{cl.equi}. Let $t:=(s_{L/K}-2v(a_1)+1)/2$ in Equation \eqref{eq.minequi}. Note that $t$ is an integer because $s_{L/K}$ is odd, and $t\ge 2$. Then $v(Da_1^2\pi^{2t})=1$, and 
	$$v((a_6+Da_3^2)\pi^{6t})=-s_{L/K}+6t=2(s_{L/K}-3v(a_1))+3\ge 5.$$ 
	
	Our equation \eqref{eq.minequi} satisfies the conditions of Step $7$ of Tate's algorithm (\cite{SilvermanAd} page 366). Indeed, $v(a_1\pi^t)\ge 1$, $v(Da_1^2\pi^{2t})=1$, $v(a_3\pi^{3t})\ge 3$, $v(a_4\pi^{4t})\ge 4$, and $v((a_6+Da_3^2)\pi^{6t})\ge 5$. The polynomial $P(T)$ considered in the algorithm has one simple root and one double root in $k$ because $v(Da_1^2\pi^{2t})=1$, $v(a_4\pi^{4t})> 2$, and $v((a_6+Da_3^2)\pi^{6t})> 3$. 
	
	In the $n$-th step of the subprocedure of Step $7$, given a general equation $y^2+A_1 xy+A_3 y=x^3+A_2 x^2+A_4 x+a_6$, we need to consider the polynomial $(A_2/\pi)x^2+(A_4/\pi^{(n+1)/2})x+A_6/\pi^n$ if $n$ is odd, and $y^2+(A_3/\pi^{n/2})y-A_6/\pi^n$ if $n$ is even. If the polynomial has a repeated root $0$ modulo $(\pi)$, the algorithm continues to Step $n+1$. If the polynomial has a repeated nonzero root modulo $(\pi)$, make a translation to the Weierstrass equation so that the repeated root modulo $(\pi)$ becomes $0$ and continue. The algorithm stops at Step $n$ when the polynomial has distinct roots modulo $(\pi)$, and the reduction is then $\mathrm{I}^{\ast}_{n-3}$.

	If $n<-s_{L/K}+6t$, then
	{\allowdisplaybreaks
		\begin{align*}
			v(a_3\pi^{3t})-n/2&\ge\frac{s_{L/K}}{2}>0,\\
			v(a_4 \pi^{4t})-(n+1)/2&\ge 4t-(n+1)/2=(3t-n/2)+(t-1/2)>0,\\
			v((a_6+Da_3^2)\pi^{6t})-n&>0.
		\end{align*}
	}
	So the polynomial in Step $n$ has a multiple root $0$ modulo $(\pi)$ and we can move to Step $n+1$.
	
	When we get to the subprocedure with $n=-s_{L/K}+6t$, we have that
	{\allowdisplaybreaks
		\begin{align*}
			v(Da_1^2 \pi^{2t})&=1,\\
			v(a_4 \pi^{4t})&>(n+1)/2,\\
			v((a_6+Da_3^2)\pi^{6t})&=n.
		\end{align*}
	}
	The relevant polynomial has a double root which is not $0$ modulo $\pi$ and we need to make a translation.
	The coefficient of $y$ in the Weierstrass equation after the translation is $0$ (see Remark \ref{rk.notation}). The coefficient of $x^2$, denoted by $a_2'$, in the Weierstrass equation after the translation still has valuation $1$. The coefficient of $x$ in the Weierstrass equation after the translation is $a_4\pi^{4t}+3(a_3a_1^{-1}\pi^{2t})^2$, which has valuation $4t-2v(a_1)$. In particular, $4t-2v(a_1)>(n+1)/2$. The new constant term $a_6'$ in the Weierstrass equation after the translation has valuation $6t-3v(a_1)$. In particular, $6t-3v(a_1)>n$. The algorithm continues.
	
	If $-s_{L/K}+6t<n<6t-3v(a_1)$, then the valuation of the coefficient of $x$ (resp. coefficient of $y$, constant term) in the Weierstrass equation is greater than $(n+1)/2$ (resp. $n/2$, $n$). So the algorithm continues without translation. 
	
	When $n=6t-3v(a_1)$, the valuation of the coefficient of $x$ (resp. coefficient of $y$) in the Weierstrass equation is greater than $(n+1)/2$ (resp. $n/2$) and the valuation of the constant term equals $n$. We need to work with the cases where $6t-3v(a_1)$ is even and odd separately. 
	
	{\bf Assume that $6t-3v(a_1)$ is odd}. Choose $b\in\mathcal{O}_K^*$ such that $v(a_2'\pi^{-1}b^2+a_6'\pi^{-6t+3v(a_1)})>0$. Make the translation $y=Y+b\pi^{(6t-3v(a_1)-1)/2}$. The coefficient of $y$ after the translation has valuation $(8t-v(a_1)-1)/2$. The coefficient of $x$ after the translation still has valuation $4t-2v(a_1)$. The constant term after the translation has valuation at least $6t-3v(a_1)+1$. The algorithm continues.

	Assume that $6t-3v(a_1)<n< 8t-4v(a_1)-1$. If $n$ is less than the valuation of the constant term in the Weierstrass equation, then the valuation of the coefficient of $x$ (resp. coefficient of $y$) is greater than $(n+1)/2$ (resp. $n/2$). So the algorithm continues without translation. If $n$ equals the valuation of the constant term, then make a translation so that the valuation of the new constant term becomes strictly bigger and the valuations of the coefficients of $x$ and $y$ are left unchanged. We claim that this is always possible.

	Assume that $6t-3v(a_1)<n_0< 8t-4v(a_1)-1$ and $n_0$ equals the valuation of the constant term. Let $A_1,...,A_6$ be the coefficients of the Weierstrass equation we get in Step $6t-3v(a_1)$. We will make a translation and use $A_1',...,A_6'$ to denote the coefficients of the Weierstrass equation after the translation. If $n_0$ is odd, let $b\in \mathcal{O}_K$ be such that $v(b^2+\frac{A_6}{A_2}\pi^{1-n_0})>0$ and let $r:=b\pi^{(n_0-1)/2}$. Make the translation $x=X+r$. Then $A_3'=A_3+rA_1$, $A_4'=A_4+3r^2$, and $A_6'=A_6+r A_4+r^2 A_2+r^3$ (see \cite{SilvermanArith} page 45). So $v(A_3')=v(A_3)$, $v(A_4')=v(A_4)$, and $v(A_6')>v(A_6)$ because $v(rA_4)>v(A_6)$ and $v(r^3)>v(A_6)$. If $n_0$ is even, let $b\in\mathcal{O}_K$ be such that $v(b^2-\frac{A_6}{\pi^{n_0}})>0$ and let $\overline{r}:=b\pi^{n_0}$. Make the translation $y=Y+\overline{r}$. Then $A_3'=A_3$, $A_4'=A_4-\overline{r} A_1$, and $A_6'=A_6-\overline{r} A_3-\overline{r}^2$ (see \cite{SilvermanArith} page 45). So $v(A_3')=v(A_3)$, $v(A_4')=v(A_4)$, and $v(A_6')>v(A_6)$ because $v(\overline{r}A_3)>v(A_6)$. So the translation can be done and the claim is true. Because the valuations of the coefficients of $x$ and $y$ are greater than $(n_0+1)/2$ and $n_0/2$, respectively, the algorithm continues.

	When $n=8t-4v(a_1)-1$, we consider the polynomial $g(x):=(A_2/\pi)x^2+(A_4/\pi^{(n+1)/2})x+A_6/\pi^n$ where $A_2$ (resp. $A_4, A_6$) is the coefficient of $x^2$ (resp. coefficient of $x$, constant term) after all the required translations of Equation \eqref{eq.minequi} in the subprocedure until the Step $n=8t-4v(a_1)-1$. We have $v(A_4)=4t-2v(a_1)=\frac{n+1}{2}$. It follows that $g(x)$ has distinct roots modulo $\pi$ as the coefficient of $x$ is a unit. Hence the reduction type is $\mathrm{I}_{n-3}^*$, with $n-3=8t-4v(a_1)-4=-12v(a_1)+4s_{L/K}=4s_{L/K}-v(j)$. 
	
	{\bf Assume that $6t-3v(a_1)$ is even}. This case is very similar to the case where $6t-3v(a_1)$ is odd, and is left to the reader.
	\vspace{3mm}

	Assume now that we are in case (b) where $v(j)> 4s_{L/K}-4$. Then $3v(a_1)-s_{L/K}> -1$ by Claim \ref{cl.equi}. Let $t:=\frac{f+s_{L/K}}{6}$ in Equation \eqref{eq.minequi}. Because $f/6$ is the fractional part of $\frac{2s_{L/K}+3}{6}$ and $s_{L/K}$ is odd, it follows that $t=\frac{3(s_{L/K}+1)}{6}-\frac{2s_{L/K}+3-f}{6}$ is an integer. Because $t\ge 1$, it is clear that $v(a_1\pi^{t})\ge 2,v(a_3\pi^{3{t}})\ge 3$, and $v(a_4 \pi^{4t})\ge 4$. Moreover,
	\[v(Da_1^2 \pi^{2t})=\frac{f+2(3v(a_1)-s_{L/K})}{3}\ge \frac{f}{3},\]
	and	
	\[v((a_6+Da_3^2)\pi^{6t})=-s_{L/K}+6t=f.\]

	If $f=1$, then Equation \eqref{eq.minequi} satisfies the conditions of Step $3$ in Tate's algorithm. Since $\pi^2\nmid (a_6+Da_3^2)\pi^{6t}$, the algorithm stops and $E/K$ has reduction type $\mathrm{II}$. If $f=3$, then Equation \eqref{eq.minequi} satisfies the conditions of Step $6$. Because the valuation of the discriminant of the polynomial $P(T)$ in the algorithm is $0$, the algorithm stops and $E/K$ has reduction type $\mathrm{I}^{\ast}_0$. If $f=5$, then Equation \eqref{eq.minequi} satisfies the conditions of Step $10$. Because $\pi^6\nmid (a_6+Da_3^2)\pi^{6t}$, the algorithm stops and $E/K$ has reduction type $\mathrm{II}^{\ast}$. This completes the proof of Theorem \ref{thm.supred} in the case of equicharacteristic $2$.

	\medskip
	
	{\it Now assume that $K$ is of mixed characteristic $2$}. Assume that $L:=K(\sqrt{D})$ with $D\in K$. Without loss of generality, we can assume that $v(D)=0$ or $1$ (Lemma \ref{le.sl_k} (b)). By a change of variables to Equation \eqref{eq.completeWE} over $K$, we can represent $E'/K$ by the equation 
	\[y^2=x^3+b_2x^2+8b_4x+16b_6,\]
	where $b_2=a_1^2+4a_2$, $b_4=2a_4+a_1a_3$, and $b_6=a_3^2+4a_6$. Then $E/K$ can be given by 
	\begin{equation}\label{eq.tw}
		y^2=x^3+Db_2x^2+8D^2b_4x+16D^3b_6.
	\end{equation}

	\textbf{The case $v(D)=1$.} In cases (a) and (b) below, we will choose a certain $t$ and consider the following dilation of Equation \eqref{eq.tw}:

	\begin{equation}\label{eq.minvd1}
		y^2=x^3+\pi^{2t} Db_2x^2+8\pi^{4t}D^2b_4x+16\pi^{6t}D^3b_6.
	\end{equation}
	\begin{claim}\label{cl.vd1}
		$v(j)\le 4s_{L/K}-4$ if and only if $2v(2)-3v(a_1)\ge 1$. 
	\end{claim}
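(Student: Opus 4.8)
The plan is to reduce the asserted equivalence to an elementary comparison of $v(j)$ with $12v(a_1)$, exactly as in the equicharacteristic Claim \ref{cl.equi}, while keeping careful track of the fact that $v(2)$ is now finite. First I would assemble the input data. Since $E'/K$ is the quadratic twist of $E/K$ we have $j(E')=j(E)=j$, and $E'/K$ has good supersingular reduction by Proposition \ref{prop.minext-twist}(a), so Lemma \ref{le.supersing-va1-v2}(a) gives $a_1\in\pi\mathcal{O}_K$, $a_3\in\mathcal{O}_K^*$, and in particular $v(\Delta)=0$. Because $v(D)=1$, Lemma \ref{le.sl_k}(b) gives $s_{L/K}=2v(2)$, so that $4s_{L/K}-4=8v(2)-4$. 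Multiplying the inequality $2v(2)-3v(a_1)\ge 1$ through by $4$, I observe that it is equivalent to $12v(a_1)\le 8v(2)-4=4s_{L/K}-4$; this is an exact equivalence, so no integrality subtlety arises. Hence it suffices to prove that $v(j)\le 4s_{L/K}-4$ holds if and only if $12v(a_1)\le 4s_{L/K}-4$.

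Next I would extract from the proof of Lemma \ref{le.supersing-va1-v2}(b), applied to $E'/K$, the two self-contained implications it actually establishes about the model: if $v(a_1)\ge v(2)$, then $v(c_4)\ge 4v(2)$ and hence $v(j)=3v(c_4)\ge 12v(2)$; and if $v(a_1)<v(2)$, then $v(c_4)=4v(a_1)$ and hence $v(j)=12v(a_1)$ (both using $v(\Delta)=0$). I plan to invoke these two facts directly rather than Lemma \ref{le.supersing-va1-v2}(b) as a black box, because for the implication $12v(a_1)\le 4s_{L/K}-4 \Rightarrow v(j)\le 4s_{L/K}-4$ the hypothesis $v(j)<12v(2)$ that the lemma requires is not yet available.

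With these facts in hand the argument splits into two cases. If $v(a_1)<v(2)$, then $v(j)=12v(a_1)$, so the two conditions $v(j)\le 4s_{L/K}-4$ and $12v(a_1)\le 4s_{L/K}-4$ are literally identical, and the claim follows from the reformulation in the first paragraph. If $v(a_1)\ge v(2)$, then $v(j)\ge 12v(2)>8v(2)-4=4s_{L/K}-4$, so the left-hand condition fails; and at the same time $12v(a_1)\ge 12v(2)>4s_{L/K}-4$, so $2v(2)-3v(a_1)\ge 1$ fails as well. Thus both sides of the equivalence are false, and the biconditional holds. Since the two cases are exhaustive, this proves the claim.

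The only real obstacle is conceptual rather than computational. In the equicharacteristic setting $v(2)=\infty$, so $v(j)=12v(a_1)$ holds unconditionally and Claim \ref{cl.equi} is immediate; here $v(2)$ is finite, so the identity $v(j)=12v(a_1)$ is valid only in the range $v(a_1)<v(2)$. The case $v(a_1)\ge v(2)$ therefore has to be treated on its own, and the essential point there is simply that $12v(2)$ already exceeds $4s_{L/K}-4=8v(2)-4$, which forces both sides of the asserted equivalence to fail simultaneously.
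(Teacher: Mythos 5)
Your proof is correct and rests on the same two facts as the paper's: that $v(j)=12v(a_1)$ when $v(a_1)<v(2)$ (the direct computation of $v(c_4)$) and that $v(a_1)\ge v(2)$ forces $v(j)\ge 12v(2)>4s_{L/K}-4$. Organizing this as a case split on $v(a_1)$ versus $v(2)$ rather than as two directions of the biconditional is only a cosmetic difference, though your explicit remark that Lemma \ref{le.supersing-va1-v2}(b) cannot be cited as a black box in the backward direction is a fair point of care.
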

	\begin{proof}[Proof of Claim \ref{cl.vd1}]
		Recall that $s_{L/K}=2v(2)$ (see Lemma \ref{le.sl_k} (b)). Assume that $v(j)\le 4s_{L/K}-4$. By Proposition \ref{prop.minext-twist} (b) and Lemma \ref{le.supersing-va1-v2} (b), we know that $v(j)=12v(a_1)$. So $2v(2)-3v(a_1)\ge 1$. Assume now the condition $2v(2)-3v(a_1)\ge 1$. It implies that $v(a_1)<v(2)$ and $12v(a_1)\le 4s_{L/K}-4$. Then $v(j)=3v(c_4)=3v(a_1^4+8a_1^2a_2+16a_2^2-48a_4-24a_1a_3)=12v(a_1)$ where $c_4$ is the standard invariant associated to \eqref{eq.completeWE}. Hence $v(j)\le 4s_{L/K}-4$. So Claim \ref{cl.vd1} is true.
	\end{proof}

	Assume that we are in case (a) where $v(j)\le 4s_{L/K}-4$. Then $2v(2)-3v(a_1)\ge 1$ by the claim \ref{cl.vd1}. Let $t:=-v(a_1)$ in Equation \eqref{eq.minvd1}. Then 
	{\allowdisplaybreaks
		\begin{align*}
			v(\pi^{2t} Db_2)&=2t+1+2v(a_1)=1,\\
			v(8\pi^{4t}D^2b_4)&=v(2)+2+(2v(2)-3v(a_1))\ge 4,\\
			v(16\pi^{6t}D^3b_6)&=2(2v(2)-3v(a_1))+3\ge 5.
	\end{align*}	}
	\noindent Our equation \eqref{eq.tw} satisfies the conditions of Step $7$ in Tate's algorithm (\cite{SilvermanAd} page 366). 
	
	If $n<4v(2)-6v(a_1)+3$, then
	{\allowdisplaybreaks
		\begin{align*}
			v(8\pi^{4t}D^2b_4)-(n+1)/2&=3v(2)-3v(a_1)+2-(n+1)/2\\
			&>3v(2)-3v(a_1)+2-\frac{4v(2)-6v(a_1)+4}{2}=v(2)>0,
		\end{align*}
	}
	and $v(16\pi^{6t}D^3b_6)-n>0$. So the polynomial in Step $n$ has multiple root $0$ modulo $(\pi)$ and we can move to Step $n+1$. 
	
	When we get to the subprocedure with $n=4v(2)-6v(a_1)+3\ge 5$, we have that
	{\allowdisplaybreaks
		\begin{align*}
			v(\pi^{2t} Db_2)&=1\\
			v(8\pi^{4t}D^2b_4)&>(n+1)/2,\\
			v((a_6+Da_3^2)\pi^{6t})&=n.
		\end{align*}
	}
	The relevant polynomial has a double root which is not $0$ modulo $\pi$ and we need to make a translation. Make the change of variable $x=X-4\pi^{2t}Da_1^{-2}(2a_4+a_1a_3)$. The coefficient of $x$ of the Weierstrass equation after the translation is $48\pi^{4t}D^2a_1^{-4}(2a_4+a_1a_3)^2,$ which has valuation $4v(2)-6v(a_1)+2$ and $4v(2)-6v(a_1)+2=n-1>\frac{n+1}{2}$. The new constant term $a_6'$ has valuation $6v(2)-9v(a_1)+3$ and $6v(2)-9v(a_1)+3=n+2v(2)-3v(a_1)>n$. The algorithm continues.
	
	If $4v(2)-6v(a_1)+3<n<6v(2)-9v(a_1)+3$, then the valuation of the coefficient of $x$ (resp. coefficient of $y$, constant term) is greater than $(n+1)/2$ (resp. $n/2$, $n$). So the algorithm continues and no translation is needed.

	When $n=6v(2)-9v(a_1)+3$, the valuation of the coefficient of $x$ (resp. coefficient of $y$) in the Weierstrass equation is greater than $(n+1)/2$ (resp. $n/2$) and the valuation of the constant term equals $n$. We need to deal with the cases with $6v(2)-9v(a_1)+3$ even and odd separately. 
	
	{\it Assume that $6v(2)-9v(a_1)+3$ is even}. Choose $c\in\mathcal{O}_K^*$ such that $\pi\mid c^2-a_6'\pi^{-(6v(2)-9v(a_1)+3)}$. Make the translation $y=Y+c\pi^{(6v(2)-9v(a_1)+3)/2}$. The new coefficient of $y$ is $2c\pi^{(6v(2)-9v(a_1)+3)/2}$, which has valuation $(8v(2)-9v(a_1)+3)/2$. The coefficient of $x$ still has valuation $4v(2)-6v(a_1)+2$. The constant term has valuation at least $6v(2)-9v(a_1)+4$. The algorithm continues.

	Assume that $6v(2)-9v(a_1)+3<n<8v(2)-12v(a_1)+3$. If $n$ is less than the valuation of the constant term in the Weierstrass equation, then the valuation of the coefficient of $x$ (resp. coefficient of $y$) is greater than $(n+1)/2$ (resp. $n/2$). So the algorithm continues without translation. If $n$ equals the valuation of the constant term, then make a translation so that the valuation of the new constant term becomes strictly bigger and the valuations of the coefficients of $x$ and $y$ are left unchanged. We claim that this is always possible.

	Assume that $6v(2)-9v(a_1)+3<n_0<8v(2)-12v(a_1)+3$ and $n_0$ equals the valuation of the constant term. Let $A_1,...,A_6$ be the coefficients of the Weierstrass equation we get in Step $6v(2)-9v(a_1)+3$. We will make a translation and use $A_1',...,A_6'$ to denote the coefficients of the Weierstrass equation after the translation. If $n_0$ is odd, let $b\in \mathcal{O}_K$ be such that $v(b^2+\frac{A_6}{A_2}\pi^{1-n_0})>0$ and let $r:=b\pi^{(n_0-1)/2}$. Make the translation $x=X+r$. Then $A_3'=A_3+rA_1$, $A_4'=A_4+2rA_2+3r^2$, and $A_6'=A_6+r A_4+r^2 A_2+r^3$ (see \cite{SilvermanArith} page 45). So $v(A_3')=v(A_3)$, $v(A_4')=v(A_4)$, and $v(A_6')>v(A_6)$ because $v(rA_4)>v(A_6)$ and $v(r^3)>v(A_6)$. If $n_0$ is even, let $b\in\mathcal{O}_K$ be such that $v(b^2-\frac{A_6}{\pi^{n_0}})>0$ and let $\overline{r}:=b\pi^{n_0}$. Make the translation $y=Y+\overline{r}$. Then $A_3'=A_3+2\overline{r}$, $A_4'=A_4-\overline{r} A_1$, and $A_6'=A_6-\overline{r} A_3-\overline{r}^2$ (see \cite{SilvermanArith} page 45). So $v(A_3')=v(A_3)$, $v(A_4')=v(A_4)$, and $v(A_6')>v(A_6)$ because $v(\overline{r}A_3)>v(A_6)$. So the translation can be done and the claim is true. Because the valuations of the coefficients of $x$ and $y$ are greater than $(n_0+1)/2$ and $n_0/2$, respectively, the algorithm continues.

	When $n=8v(2)-12v(a_1)+3$, consider the polynomial $g(x):=(A_2/\pi)x^2+(A_4/\pi^{(n+1)/2})x+A_6/\pi^n$ where $A_2$ (resp. $A_4, A_6$) is the coefficient of $x^2$ (resp. coefficient of $x$, constant term) after all the required translations of Equation \eqref{eq.minvd1} in the subprocedure until the Step $n=8v(2)-12v(a_1)+3$. We have $v(A_4)=4v(2)-6v(a_1)+2$. It follows that $g(x)$ has distinct roots modulo $\pi$ as the coefficient of $x$ is a unit. Hence the reduction type is $\mathrm{I}_{n-3}^*$, with $n-3=8v(2)-12v(a_1)=4s_{L/K}-v(j)$.

	{\it Assume that $6v(2)-9v(a_1)+3$ is odd}. This case is similar to the case where $6v(2)-9v(a_1)+3$ is odd, and is left to the reader.
	
	\medskip
	
	Assume that we are in case (b) where $v(j)> 4s_{L/K}-4$. Then $3v(a_1)-2v(2)\ge 0$ by Claim \ref{cl.vd1}. Let $t:=\frac{f-4v(2)-3}{6}$ in Equation \eqref{eq.minvd1}. Recall that $\frac{f}{6}$ is the fractional part of $\frac{2s_{L/K}+3}{6}$ and that $s_{L/K}=2v(2)$ (Lemma \ref{le.sl_k} (b)). So $t$ is an integer. Then 
	{\allowdisplaybreaks
		\begin{align*}
			v(\pi^{2t} Db_2)&=\frac{f+2(3v(a_1)-2v(2))}{3}\ge \frac{f}{3},\\
			v(8\pi^{4t}D^2b_4)&=\frac{2f+v(2)+3v(b_4)}{3}\ge \frac{2f+4}{3},\\
			v(16\pi^{6t}D^3b_6)&=4v(2)+3+6t=f.
		\end{align*}
	}

	If $f=1$, then Equation \eqref{eq.minvd1} satisfies the conditions of Step $3$ in Tate's algorithm. Indeed, $v(\pi^{2t} Db_2)\ge 1$, $v(8\pi^{4t}D^2b_4)\ge 2$, and $v(16\pi^{6t}D^3b_6)=1$. Because $\pi^2\nmid 16\pi^{6t}D^3b_6$, $E/K$ has reduction type $\mathrm{II}$. 
	
	If $f=3$, then Equation \eqref{eq.minvd1} satisfies the conditions of Step $6$. Indeed, $v(\pi^{2t} Db_2)\ge 1$, $v(8\pi^{4t}D^2b_4)\ge 4$, and $v(16\pi^{6t}D^3b_6)=3$. The valuation of the discriminant of the polynomial $P(T)$ in the algorithm is $0$. So $E/K$ has reduction type $\mathrm{I}^{\ast}_0$. 
	
	If $f=5$, then Equation \eqref{eq.minvd1} satisfies the conditions of Step $10$. Indeed, $v(\pi^{2t} Db_2)\ge 2$, $v(8\pi^{4t}D^2b_4)\ge 5$, and $v(16\pi^{6t}D^3b_6)=5$. Since $\pi^6\nmid 16\pi^{6t}D^3b_6$, $E/K$ has reduction type $\mathrm{II}^{\ast}$. This finishes the proof of the case $v(D)=1$.

	\medskip

	\textbf{The case $v(D)=0$.} 
	By Lemma \ref{le.sl_k} (b), $L/K$ is given by an Eisenstein polynomial $z^2+az+b\in K[z]$ with $v(a)\ge 1$, $v(b)=1$ and $a^2-4b=Dc^2$ for some $c\in K$ with $v(c)=v(a)\le v(2)$. Let $m:=ac^{-1}$. Since $D-m^2=-4b/c^2$, we have $v(D-m^2)=2v(2)-2v(a)+1$. To make the valuation of the coefficient of $x^2$ odd in a new equation, consider the change of variable $y=Y+a_1mx+4Da_3m$ to \eqref{eq.tw}. We get the following equation
	{\allowdisplaybreaks
		\begin{equation}\label{eq.twvD0}
			\aligned
			&y^2+2a_1mxy+8Da_3my
			=x^3+(a_1^2(D-m^2))x^2\\
			&\pushright{\qquad\qquad\qquad\qquad\qquad\qquad+8D(a_1a_3(D-m^2)+2Da_4)x+16D^2(a_3^2(D-m^2)+4Da_6)}
			\endaligned
		\end{equation}
	}
	(see Remark \ref{rk.notation}). In cases (a) and (b) below, we will choose a certain $t$ and consider the following dilation of Equation \eqref{eq.twvD0}: 
	{\allowdisplaybreaks
		\begin{equation}\label{eq.minvd0}
			\aligned
			&y^2+2\pi^t a_1mxy+8\pi^{3t} Da_3my
			=x^3+\pi^{2t} (a_1^2(D-m^2))x^2\\
			&\qquad\qquad\qquad\qquad+8\pi^{4t} D(a_1a_3(D-m^2)+2Da_4)x+16\pi^{6t} D^2(a_3^2(D-m^2)+4Da_6).
			\endaligned
		\end{equation}
	}
	
	\begin{claim}\label{cl.vd0}
		$v(j)\le 4s_{L/K}-4$ if and only if $2v(a)-3v(a_1)\ge 2$.
	\end{claim}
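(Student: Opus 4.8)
The plan is to follow the template established in Claims \ref{cl.equi} and \ref{cl.vd1}: both directions reduce to the single identity $v(j)=12v(a_1)$, where $a_1$ is the coefficient in the minimal model \eqref{eq.completeWE} of the twist $E'/K$, and the only new ingredient is the value of $s_{L/K}$. Since we are in the case $v(D)=0$, Lemma \ref{le.sl_k} (b) gives $s_{L/K}=2v(a)-1$, so that $4s_{L/K}-4=8v(a)-8$. The equivalence to be proved is therefore $v(j)\le 8v(a)-8 \Longleftrightarrow 2v(a)-3v(a_1)\ge 2$.

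For the forward implication I would assume $v(j)\le 4s_{L/K}-4$. Proposition \ref{prop.minext-twist} (b) then gives $v(j)<12v(2)$, and since $E'/K$ has good supersingular, hence good, reduction, Lemma \ref{le.supersing-va1-v2} (b) applies and yields $v(j)=12v(a_1)$. Substituting into the hypothesis gives $12v(a_1)\le 8v(a)-8$, which rearranges at once to $2v(a)-3v(a_1)\ge 2$.

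For the converse I would assume $2v(a)-3v(a_1)\ge 2$, that is $3v(a_1)\le 2v(a)-2$. Combined with the bound $v(a)\le v(2)$ recorded in Lemma \ref{le.sl_k} (b), this forces $3v(a_1)\le 2v(2)-2<3v(2)$, hence $v(a_1)<v(2)$. Recalling that the translation made at the start of the proof arranged $a_2=0$ and that $a_3\in\mathcal{O}_K^*$, I would then read off $v(j)=3v(c_4)$ directly from $c_4=a_1^4-48a_4-24a_1a_3$: here $v(a_1^4)=4v(a_1)$, while $v(48a_4)\ge 4v(2)>4v(a_1)$ and $v(24a_1a_3)=3v(2)+v(a_1)>4v(a_1)$, both strict inequalities using $v(a_1)<v(2)$. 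Thus $a_1^4$ is the strictly dominant term, $v(c_4)=4v(a_1)$, and $v(j)=12v(a_1)\le 8v(a)-8=4s_{L/K}-4$, as required.

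I expect no genuine obstacle, since the argument is a transcription of the $v(D)=1$ case with $s_{L/K}=2v(a)-1$ replacing $s_{L/K}=2v(2)$. The only points demanding care are the use of $v(a)\le v(2)$ to secure $v(a_1)<v(2)$ in the converse, and the verification that $a_1^4$ strictly dominates the other two terms of $c_4$, which is exactly what pins down $v(j)$ to equal $12v(a_1)$.
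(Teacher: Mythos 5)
Your proof is correct and follows essentially the same route as the paper's: the forward direction via Proposition \ref{prop.minext-twist} (b) and Lemma \ref{le.supersing-va1-v2} (b), and the converse by deducing $v(a_1)<v(2)$ from $2v(a)-3v(a_1)\ge 2$ together with $v(a)\le v(2)$ and then reading off $v(j)=3v(c_4)=12v(a_1)$. Your explicit verification that $a_1^4$ strictly dominates the other terms of $c_4$ is a welcome bit of detail that the paper only asserts.
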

	
	\begin{proof}[Proof of Claim \ref{cl.vd1}]
		Recall that $s_{L/K}=2v(a)-1$ (see Lemma \ref{le.sl_k}). Assume that $v(j)\le 4s_{L/K}-4$. By Proposition \ref{prop.minext-twist} (b) and Lemma \ref{le.supersing-va1-v2} (b), we get that $v(j)=12v(a_1)$. So $2v(a)-3v(a_1)\ge 2$. Conversely, assume that $2v(a)-3v(a_1)\ge 2$. Then $v(a_1)<v(a)\le v(2)$. It follows that $v(j)=3v(c_4)=3v(a_1^4+8a_1^2a_2+16a_2^2-48a_4-24a_1a_3)=12v(a_1)$. Hence $v(j)=12v(a_1)\le8v(a)-8= 4s_{L/K}-4$. So Claim \ref{cl.vd0} holds.
	\end{proof}

	Assume that we are in case (a) where $v(j)\le 4s_{L/K}-4$. Then $2v(a)-3v(a_1)\ge 2$ by Claim \ref{cl.vd0}. Let $t:=-v(a_1)-v(2)+v(a)$ in Equation \eqref{eq.minvd0}. Then $v(2\pi^{t} a_1m)=v(a)\ge 1$, $v(\pi^{2t} (a_1^2(D-m^2)))=2t+2v(a_1)+2v(2)-2v(a)+1=1$, and $v(8\pi^{3t} Da_3m)=3v(a)-3v(a_1)\ge 3$. If $v(a_1)+2v(2)-2v(a)+1<v(2)$, then \[v(8\pi^{4t} D(a_1a_3(D-m^2)+2Da_4))=v(2)+2v(a)-3v(a_1)+1\ge 4.\]
	If $v(a_1)+2v(2)-2v(a)+1\ge v(2)$, then \[v(8\pi^{4t} D(a_1a_3(D-m^2)+2Da_4))\ge 2(2v(a)-3v(a_1))+2v(a_1)\ge 6.\] 
	Finally, \[v(16\pi^{6t} D^2(a_3^2(D-m^2)+4Da_6))=2(2v(a)-3v(a_1))+1\ge 5.\] 
	It follows that Equation \eqref{eq.minvd0} satisfies the conditions of Step $7$ in Tate's algorithm (\cite{SilvermanAd} page 366). If $n<4v(a)-6v(a_1)+1$, then 
	{\allowdisplaybreaks
		\begin{align*}
			v(8\pi^{3t} Da_3m)-n/2>3v(a)-&3v(a_1)-\frac{4v(a)-6v(a_1)+1}{2}=\frac{2v(a)-1}{2}>0,\\
			v(16\pi^{6t} D^2(a_3^2(D-m^2)&+4Da_6))-n>0.
		\end{align*}
	}
	If $v(a_1)+2v(2)-2v(a)+1<v(2)$, then 
	{\allowdisplaybreaks
		\begin{align*}
			v(8\pi^{4t} D(a_1a_3(D-m^2)+2Da_4))-(n+1)/2=&v(2)+2v(a)-3v(a_1)+1-(n+1)/2\\
			>&v(2)+2v(a)-3v(a_1)+1-\frac{4v(a)-6v(a_1)+2}{2}\\
			=&\frac{2v(2)-2}{2}\ge 0
		\end{align*}
	}
	If $v(a_1)+2v(2)-2v(a)+1\ge v(2)$, then 
	{\allowdisplaybreaks
		\begin{align*}
			v(8\pi^{4t} D(a_1a_3(D-m^2)+2Da_4))-(n+1)/2&\ge4v(a)-4v(a_1)-(n+1)/2\\
			&>4v(a)-4v(a_1)-\frac{4v(a)-6v(a_1)+2}{2}\\
			&=\frac{4v(a)-6v(a_1)}{2}+2v(a_1)-1>0
		\end{align*}
	}
	So the polynomial in Step $n$ has multiple root $0$ modulo $(\pi)$ and we can move to Step $n+1$. When we get to the subprocedure with $n=4v(a)-6v(a_1)+1\ge 5$, we have that 
	{\allowdisplaybreaks
		\begin{align*}
			v(\pi^{2t} (a_1^2(D-m^2)))&=1\\
			v(8\pi^{4t} D(a_1a_3(D-m^2)+2Da_4))&>(n+1)/2,\\
			v(16\pi^{6t} D^2(a_3^2(D-m^2)+4Da_6))&=n.
		\end{align*}
	}
	The relevant polynomial has a double root which is not $0$ modulo $\pi$ and we need to make a translation. Make the change of variable $x=X-4\pi^{2t}Da_3a_1^{-1}$. The coefficient of $y$ of the Weierstrass equation after the translation is $0$. The coefficient of $x$ of the Weierstrass equation after the translation has valuation $4v(a)-6v(a_1)=n-1>\frac{n+1}{2}$. The constant term $a_6'$ has valuation $6v(a)-9v(a_1)=\frac{3}{2}(n-1)>n$. The algorithm continues.

	If $4v(a)-6v(a_1)+1<n<6v(a)-9v(a_1)$, then the valuation of the coefficient of $x$ (resp. coefficient of $y$, constant term) is greater than $(n+1)/2$ (resp. $n/2$, $n$). So the algorithm continues and no translation is needed. 
	
	When $n=6v(a)-9v(a_1)$, the valuation of the coefficient of $x$ (resp. coefficient of $y$) in the Weierstrass equation is greater than $(n+1)/2$ (resp. $n/2$) and the valuation of the constant term equals $n$. We need to consider the cases with $6v(a)-9v(a_1)$ even and odd separately. 
	
	{\it Assume that $6v(a)-9v(a_1)$ is even}. Choose $d\in\mathcal{O}_K^*$ such that $v(d^2-a_6'\pi^{-(6v(a)-9v(a_1))})>0$. Make the translation $y=Y+d\pi^{(6v(a)-9v(a_1))/2}$. The new coefficient of $y$ is $2d\pi^{(6v(a)-9v(a_1))/2}$, which has valuation $(2v(2)+6v(a)-9v(a_1))/2$. The coefficient of $x$ still has valuation $4v(a)-6v(a_1)$. The constant term has valuation at least $6v(a)-9v(a_1)+1$. The algorithm continues.
	
	Assume that $6v(a)-9v(a_1)<n<8v(a)-12v(a_1)-1$. If $n$ is less than the valuation of the constant term in the Weierstrass equation, then the valuation of the coefficient of $x$ (resp. coefficient of $y$) is greater than $(n+1)/2$ (resp. $n/2$). So the algorithm continues without translation. If $n$ equals the valuation of the constant term, then make a translation so that the valuation of the new constant term becomes strictly bigger and the valuations of the coefficients of $x$ and $y$ are left unchanged. As in Step $n$ when $6v(2)-9v(a_1)+3<n<8v(2)-12v(a_1)+3$ in the case of $v(D)=1$, this is always possible. Because the valuations of the coefficients of $x$ and $y$ are greater than $(n+1)/2$ and $n/2$, respectively, the algorithm continues.
	
	When $n=8v(a)-12v(a_1)-1$, we consider the appropriate polynomial of the form $g(x):=(A_2/\pi)x^2+(A_4/\pi^{(n+1)/2})x+A_6/\pi^n$ where $A_2$ (resp. $A_4, A_6$) is the coefficient of $x^2$ (resp. coefficient of $x$, constant term) after all the required translations of Equation \eqref{eq.minvd0} in the subprocedure until the Step $n=8v(a)-12v(a_1)-1$. We have $v(A_4)=4v(a)-6v(a_1)$. It follows that $g(x)$ has distinct roots modulo $\pi$ as the coefficient of $x$ is a unit. This polynomial has distinct roots modulo $\pi$ as the coefficient of $x$ is a unit. Hence the reduction type is $\mathrm{I}_{n-3}^*$, with $n-3=8v(a)-12v(a_1)-4=4s_{L/K}-v(j)$. 
	
	{\it Assume that $6v(a)-9v(a_1)$ is odd}. This case is similar to the case where $6v(a)-9v(a_1)$ is even, and is left to the reader.
	
	\medskip
	
	Assume that we are in case (b) where $v(j)> 4s_{L/K}-4$. Then $3v(a_1)-2v(a)>-2$ by Claim \ref{cl.vd0}. Let $t:=\frac{2v(a)-1+f}{6}-v(2)$ in Equation \eqref{eq.minvd0}, which is an integer because $s_{L/K}=2v(a)-1$, $\frac{f}{6}$ is the fractional part of $\frac{2s_{L/K}+3}{6}$, and $\frac{2v(a)-1+f}{6}=v(a)-\frac{4v(a)+1-f}{6}$. Then
	{\allowdisplaybreaks
		\begin{align*}
			v(2\pi^{t} a_1m)&=\frac{f+6v(a_1)+2v(a)-1}{6}\ge \frac{f+7}{6},\\
			v(\pi^{2t} (a_1^2(D-m^2)))&=\frac{f+2(3v(a_1)-2v(a))+2}{3}\ge \frac{f}{3},\\
			v(8\pi^{3t} Da_3m)&=\frac{f+2v(a)-1}{2}\ge \frac{f+1}{2},\\
			v(16\pi^{6t} D^2(a_3^2(D-m^2)+4Da_6))&=6v(2)+6t-2v(a)+1=f.
		\end{align*}
	}
	If $v(a_1)+2v(2)-2v(a)+1<v(2)$, then 
	{\allowdisplaybreaks
		\begin{align*}
			v(8\pi^{4t} D(a_1a_3(D-m^2)+2Da_4))&=\frac{2f+3v(2)+3v(a_1)-2v(a)+1}{3}\\
			&\ge\frac{2f+3}{3}. 
		\end{align*}
	}
	If $v(a_1)+2v(2)-2v(a)+1\ge v(2)$, then 
	{\allowdisplaybreaks
		\begin{align*}
			v(8\pi^{4t} D(a_1a_3(D-m^2)+2Da_4))&\ge \frac{2f+4v(a)-2}{3}\\
			&\ge \frac{2f+2}{3}.
		\end{align*}
	}

	If $f=1$, then Equation \eqref{eq.minvd0} satisfies the conditions of Step $3$ in Tate's algorithm. Indeed, $v(2\pi^{t} a_1m)\ge 2$, $v(\pi^{2t} (a_1^2(D-m^2)))\ge 1$, $v(8\pi^{3t} Da_3m)\ge 1$, $v(8\pi^{4t} D(a_1a_3(D-m^2)+2Da_4))\ge 2$, and $v(16\pi^{6t} D^2(a_3^2(D-m^2)+4Da_6))=1$. Because $\pi^2\nmid 16\pi^{6t} D^2(a_3^2(D-m^2)+4Da_6)$, $E/K$ has reduction type $\mathrm{II}$. 
	
	If $f=3$, then Equation \eqref{eq.minvd0} satisfies the conditions of Step $6$. Indeed, $v(2\pi^{t} a_1m)\ge 2$, $v(\pi^{2t} (a_1^2(D-m^2)))\ge 1$, $v(8\pi^{3t} Da_3m)\ge 2$, $v(8\pi^{4t} D(a_1a_3(D-m^2)+2Da_4))\ge 3$, and $v(16\pi^{6t} D^2(a_3^2(D-m^2)+4Da_6))=3$. The valuation of the discriminant of the polynomial $P(T)$ in the algorithm is $0$. So $E/K$ has reduction type $\mathrm{I}^{\ast}_0$. 
	
	If $f=5$, then Equation \eqref{eq.minvd0} satisfies the conditions of Step $10$. Indeed, $v(2\pi^{t} a_1m)\ge 2$, $v(\pi^{2t} (a_1^2(D-m^2)))\ge 2$, $v(8\pi^{3t} Da_3m)\ge 3$, $v(8\pi^{4t} D(a_1a_3(D-m^2)+2Da_4))\ge 4$, and $v(16\pi^{6t} D^2(a_3^2(D-m^2)+4Da_6))=5$. Since $\pi^6\nmid 16\pi^{6t} D^2(a_3^2(D-m^2)+4Da_6)$, $E/K$ has reduction type $\mathrm{II}^{\ast}$. This finishes the proof of the case $v(D)=0$. 
	
	Assume that we are in case (a), i.e. $v(j)\le 4s_{L/K}-4$. Then Proposition \ref{prop.minext-twist} (b) shows that $12\mid v(j)$. The proof of Theorem \ref{thm.supred} is now complete.
\end{proof}

\begin{remark}
	The mixed characteristic case of Theorem \ref{thm.supred} can be derived from \cite{Comalada}, Corollary 6, with some work.
\end{remark}

\begin{corollary}\label{cor.I_ms}
	Let $\mathcal{O}_K$ be as in \ref{em.intro}. Let $E/K$ be an elliptic curve with reduction type $\mathrm{I}^{\ast}_{n}$ for some $n\ge 1$. Assume that there exists a quadratic extension $L/K$ such that $E_L/L$ has semistable reduction. Then $n=4s_{L/K}-v(j(E))$.
\end{corollary}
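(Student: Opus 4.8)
The plan is to reduce the statement to a trichotomy governed by the sign of $v(j)$, exploiting the fact that over a residue field of characteristic $2$ an elliptic curve with good reduction is supersingular if and only if its $j$-invariant reduces to $0$. Since $E/K$ has type $\mathrm{I}^{\ast}_n$ it has additive reduction, and since $E_L/L$ is semistable, $E_L/L$ has either good or multiplicative reduction. First I would observe that the type of the semistable reduction of $E_L$ is determined entirely by $v(j)$: because $j(E_L)=j(E)=j$ and $L/K$ is totally ramified (the residue field is algebraically closed), we have $v_L(j)=2v(j)$, so $E_L$ has multiplicative reduction exactly when $v(j)<0$ and good reduction exactly when $v(j)\ge 0$; in the latter case $E_L$ is supersingular precisely when $\overline{j}=0$, i.e. when $v(j)>0$, and ordinary precisely when $v(j)=0$. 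Note also that in all three cases the given quadratic $L$ is automatically the minimal extension realizing semistability, since $E$ is additive, so $s_{L/K}$ is unambiguous.

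The three cases are then handled by the three available structure theorems. When $v(j)>0$ the curve $E_L/L$ has good supersingular reduction, so Theorem \ref{thm.supred} applies; since $E/K$ has type $\mathrm{I}^{\ast}_n$ with $n\ge 1$ rather than one of $\mathrm{II},\mathrm{I}^{\ast}_0,\mathrm{II}^{\ast}$, we must be in case (a) of that theorem, which forces $v(j)\le 4s_{L/K}-4$ and gives $n=4s_{L/K}-v(j)$ directly. When $v(j)=0$ the curve $E_L/L$ has good ordinary reduction, and I would invoke \cite{Lorenzini13}, Theorem 4.2; here the point to check is that the value of $n$ produced there equals $4s_{L/K}$, so that $n=4s_{L/K}=4s_{L/K}-v(j)$. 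When $v(j)<0$ the curve $E_L/L$ has multiplicative reduction, and I would invoke \cite{Lorenzini10}, Theorem 2.8, where the analogous point is that the resulting $n$ equals $4s_{L/K}-v(j)=4s_{L/K}-v(j)$, the last term being the positive contribution $-v(j)$ of the potentially multiplicative part.

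The supersingular case is immediate from Theorem \ref{thm.supred}, so the only genuine work lies in confirming that the two cited results deliver the formula in exactly the present normalization. I expect the main obstacle to be reconciling conventions: Lorenzini's theorems are phrased as determining the Kodaira type $\mathrm{I}^{\ast}_n$, and one must express their description of $n$ in terms of the quantities $s_{L/K}$ and $v(j)$ used here. For the ordinary case this reduces to verifying that their $n$ is $4s_{L/K}$ (using $v(j)=0$), and for the multiplicative case that their $n$ absorbs the additional summand $-v(j)$. If the cited statements do not record $n$ explicitly in these terms, a short computation with Tate's algorithm applied to a minimal model of $E/K$ in each of the two cases, entirely parallel to the one carried out in the proof of Theorem \ref{thm.supred}, recovers the formula $n=4s_{L/K}-v(j)$.
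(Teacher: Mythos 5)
Your proposal is correct and follows essentially the same route as the paper: the paper's proof simply cites Theorem 2.8 of \cite{Lorenzini10} for the potentially multiplicative case, Theorem 4.2 of \cite{Lorenzini13} for the potentially good ordinary case, and Theorem \ref{thm.supred} for the potentially good supersingular case. Your explicit trichotomy via the sign of $v(j)$ (and the observation that a quadratic $L$ realizing semistability is automatically the minimal one, so $s_{L/K}$ is well defined) is exactly the case division implicit in that citation.
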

\begin{proof}
	The statement follows from Theorem 2.8 in \cite{Lorenzini10}, Theorem 4.2 in \cite{Lorenzini13}, and Theorem \ref{thm.supred} above. Note that if $E_K/K$ has potentially multiplicative reduction or potentially good ordinary reduction, then a quadratic extension $L/K$ such that $E_L/L$ is semistable always exists.
\end{proof}

\begin{remark}
	Let $\mathcal{O}_K$ be as in \ref{em.intro}. If $E/K$ has reduction type $\mathrm{I}^{\ast}_{n}$ for some $n\ge 1$ and good ordinary or supersingular reduction after a quadratic extension, then $4\mid n$ by Corollary \ref{cor.I_ms} and Theorem \ref{thm.supred} (a). However, the divisibility is not necessarily true for $E/K$ with additive reduction and potentially multiplicative reduction. For instance, consider the elliptic curve $E/\mathbb{Q}$ with LMFDB label 432.g2. The elliptic curve has reduction type $\mathrm{I}^{\ast}_{5}$ and potentially multiplicative reduction at $(2)$.
\end{remark}

\begin{example}
	
	Let $\mathcal{O}_K$ be as in \ref{em.intro}. Let $E/K$ be an elliptic curve with additive reduction and potentially good supersingular reduction after quadratic extension. Theorem \ref{thm.supred} (a) shows that if $E/K$ has reduction type $\mathrm{I}^{\ast}_{n}$, then $4\mid n$. The following two examples illustrate that there indeed exist elliptic curves with reduction type $\mathrm{I}^{\ast}_{n}$ with $4\nmid n$ and potentially good supersingular reduction.

	(a) Let $E_1/K_1$ be the elliptic curve with LMFDB label 2.2.8.1-128.1-a1 \cite{LMFDB} where $K_1=\mathbb{Q}(\sqrt{2})$. Then $j(E_1)=2^7$. So $E_1/K_1$ has potentially good supersingular reduction at the place $(\sqrt{2})$. The curve $E_1/K_1$ has reduction type $\mathrm{I}^{\ast}_5$ at $(\sqrt{2})$.

	(b) Let $E_2/K_2$ be the elliptic curve with LMFDB label 2.0.4.1-4096.1-a2 where $K_2=\mathbb{Q}(\sqrt{-1})$. Then $j(E_2)=2^6 5^3$. So $E_2/K_2$ has potentially good supersingular reduction at the only prime over $(2)$. The curve $E_2/K_2$ has reduction type $\mathrm{I}^{\ast}_2$ at the prime over $(2)$.

	Theorem \ref{thm.supred} (a) shows that the base change $E_{1,L}/L$ (resp. $E_{2,L}/L$) is not semistable at any place over $(2)$ for any quadratic extension $L/K_1$ (resp. $L/K_2$).
	
\end{example}

\begin{lemma}\label{le.jinv}
	Let $\mathcal{O}_K$ be as in \ref{em.intro} and let $0<u<v(2)$. Then there exists an elliptic curve $E/K$ with good supersingular reduction such that $v(j(E))=12u$.
\end{lemma}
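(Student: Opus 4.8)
The plan is to write down one explicit Weierstrass equation and read off everything from Lemma~\ref{le.supersing-va1-v2}. Since $v(j(E))$ is a non-negative integer and Lemma~\ref{le.supersing-va1-v2}(b) forces every admissible value of $v(j)$ for a good-reduction curve with $v(j)<12v(2)$ to be a multiple of $12$ equal to $12v(a_1)$, I may take $u$ to be a positive integer with $u<v(2)$, and try to arrange $v(a_1)=u$. The candidate is the curve $E/K$ given by
\[
y^2+\pi^u xy+y=x^3,
\]
so that $a_1=\pi^u$, $a_3=1$, and $a_2=a_4=a_6=0$.

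First I would verify that this really defines an elliptic curve with good reduction. A direct computation of the standard invariants yields $b_2=\pi^{2u}$, $b_4=\pi^u$, $b_6=1$, $b_8=0$, and hence $\Delta=-8b_4^3-27b_6^2+9b_2b_4b_6=\pi^{3u}-27$. Because $u\ge 1$ and $\operatorname{char}(k)=2$ (so $v(27)=0$), we get $v(\Delta)=0$; in particular $\Delta\ne 0$, so the equation is smooth over $K$ and $E/K$ has good reduction. Then I invoke Lemma~\ref{le.supersing-va1-v2}(a): since $a_3=1\in\mathcal{O}_K^{\ast}$ and $a_1=\pi^u\in\pi\mathcal{O}_K$, the curve $E/K$ has good \emph{supersingular} reduction.

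It remains to compute $v(j)$. Here $c_4=b_2^2-24b_4=\pi^{4u}-24\pi^u$, and the hypothesis $u<v(2)$ is precisely what guarantees $4u<3v(2)+u=v(24\pi^u)$, so that $v(c_4)=4u$. Therefore $v(j)=3v(c_4)-v(\Delta)=12u$, as required. (Alternatively, once one observes $v(j)<12v(2)$, the equality $v(j)=12u$ is immediate from Lemma~\ref{le.supersing-va1-v2}(b).) There is no genuine obstacle, since the statement is a construction; the only point requiring care is the role of $u<v(2)$, which ensures that the term $b_2^2=\pi^{4u}$ dominates $24b_4$ in $c_4$. Were $u\ge v(2)$, the valuation of $c_4$ would instead be controlled by $24b_4$ and the conclusion $v(j)=12u$ would fail, in agreement with the bound $v(j)<12v(2)$ in Lemma~\ref{le.supersing-va1-v2}(b).
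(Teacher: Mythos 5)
Your proposal is correct and is essentially the paper's own proof: both take the curve $y^2+\pi^u xy+a_3y=x^3$ with $a_3$ a unit (you specialize to $a_3=1$), invoke Lemma \ref{le.supersing-va1-v2}(a) for good supersingular reduction, and then use $u<v(2)$ to see that the $\pi^{4u}$ term dominates in $c_4$ (equivalently, in the numerator of $j$) so that $v(j)=12u$. Your explicit check that $v(\Delta)=0$ is a welcome small addition, since it also confirms minimality of the equation before the lemma is applied.
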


\begin{proof}

	Let $a_1=\pi_K^u$ and $a_3\in\mathcal{O}_K^*$. Let $E/K$ be the elliptic curve given by
	\[y^2+a_1 xy+a_3y=x^3.\]

	By Lemma \ref{le.supersing-va1-v2}, $E/K$ has good supersingular reduction. Since $v(1728)=6v(2), v(13824)=9v(2)$, and $v(a_1)<v(2)$, we obtain that
	\[v(j)=v(\frac{a_1^{12}-72a_1^9a_3+1728a_1^6a_3^2-13824a_1^3a_3^3}{a_1^3a_3^3-27a_3^4})=12v(a_1)=12u.\]
\end{proof}

Note that $E/K$ in Lemma \ref{le.jinv} is endowed with a $3$-isogeny $E/K\rightarrow E_2/K$ over $K$ (see \cite{Hadano} sec. 1). It follows from Proposition \ref{prop.vj} that $v(j(E_2))=12u$.

\begin{corollary}\label{cor.Im}
	Let $\mathcal{O}_K$ be as in \ref{em.intro} with mixed characteristic $2$ and $v(2)\ge 2$. Let $E/K$ be an elliptic curve with additive reduction and good supersingular reduction after a quadratic extension $L/K$. Assume that $v(j(E))\le 4s_{L/K}-4$. Then the reduction type of $E/K$ can only be $\mathrm{I}^{\ast}_{4m}$ with $m$ satisfying (i) $1\le m\le 2v(2)-3$ and (ii) $m\ne 2v(2)-5$. 
	
	Conversely, let $m$ be an integer which satisfies (i) and (ii). Then there exists an elliptic curve $E/K$ with reduction type $\mathrm{I}^{\ast}_{4m}$ and good supersingular reduction after a quadratic extension.
\end{corollary}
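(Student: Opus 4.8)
The plan is to reduce the whole statement to an elementary question about two integer parameters. By Theorem \ref{thm.supred}(a), the hypothesis $v(j(E))\le 4s_{L/K}-4$ already forces $E/K$ to have reduction type $\mathrm{I}^{\ast}_{n}$ with $n=4s_{L/K}-v(j(E))$ and $12\mid v(j(E))$. Writing $s:=s_{L/K}$ and letting $u:=v(a_1)$, where $a_1$ is the coefficient of $xy$ in a minimal Weierstrass equation of the quadratic twist $E'/K$ of $E$ by $L$, Proposition \ref{prop.minext-twist}(b) together with Lemma \ref{le.supersing-va1-v2}(b) gives $v(j(E))=12u$ with $1\le u\le v(2)-1$. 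Hence $n=4s-12u=4(s-3u)$, so the type is $\mathrm{I}^{\ast}_{4m}$ with $m=s-3u$, and everything reduces to determining which integers arise as $s-3u$. Here $u$ ranges over $\{1,\dots,v(2)-1\}$ and, by Lemma \ref{le.slkbound}, $s$ is either $2v(2)$ or an odd integer in $\{1,\dots,2v(2)-1\}$.

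For the forward direction I would read the constraints (i) and (ii) directly off this description. Since $u\ge 1$ and $s\le 2v(2)$ we get $m=s-3u\le 2v(2)-3$, and since $v(j(E))\le 4s_{L/K}-4$ gives $4m=n\ge 4$ we get $m\ge 1$; this is (i). For (ii), suppose $m=2v(2)-5$. If $s=2v(2)$ then $3u=5$, which is impossible; if $s$ is odd then $s=2v(2)-5+3u\le 2v(2)-1$ forces $u=1$ and $s=2v(2)-2$, which is even, a contradiction. Thus $m\ne 2v(2)-5$, which is (ii).

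For the converse I would realize each admissible $m$ by an explicit pair $(s,u)$, build the good supersingular curve $E'/K$ with $v(j(E'))=12u$ via Lemma \ref{le.jinv}, build $L/K$ with $s_{L/K}=s$ via Lemma \ref{le.slkbound}, and take $E$ to be the quadratic twist of $E'$ by $L$. Twisting preserves the $j$-invariant, and the same Weil-restriction argument as in the proof of Proposition \ref{prop.minext-twist}(a), with the roles of $E$ and $E'$ interchanged, shows that $E$ has additive reduction and acquires good supersingular reduction over $L$; since $12u=v(j(E))\le 4s-4$ (equivalently $m\ge 1$), Theorem \ref{thm.supred}(a) then yields reduction type $\mathrm{I}^{\ast}_{4s-12u}=\mathrm{I}^{\ast}_{4m}$. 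The choices are dictated by the parity of $m$: if $m$ is even take $u=1$ and $s=m+3$ (odd, and $\le 2v(2)-1$ since the largest even admissible value is $2v(2)-4$); if $m=2v(2)-3$ take $u=1$ and $s=2v(2)$; and if $m$ is odd with $m\le 2v(2)-7$ take $u=2$ and $s=m+6$ (odd, with $u=2\le v(2)-1$ since then $v(2)\ge 4$). One then checks these exhaust all admissible $m$: the only odd value left uncovered strictly between $2v(2)-7$ and $2v(2)-3$ is exactly $2v(2)-5$, which is excluded by (ii).

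The routine parts are the valuation bookkeeping confirming $s-3u=m$ and $12u\le 4s-4$ in each case, together with verifying $u\in\{1,\dots,v(2)-1\}$ and that $s$ lies in the admissible set of Lemma \ref{le.slkbound}. The main conceptual point, and the step I expect to require the most care, is the converse's case analysis, which must simultaneously hit every integer in $\{1,\dots,2v(2)-3\}$, skip the single gap $2v(2)-5$, and keep $s$ within the admissible values of $s_{L/K}$; the parity split is precisely what makes the excluded value drop out. A secondary point to state carefully is that twisting a good-reduction curve by the ramified extension $L/K$ indeed produces additive rather than good reduction, which is why the Weil-restriction/abelian-rank argument is invoked.
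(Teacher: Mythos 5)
Your proposal is correct and follows essentially the same route as the paper: the forward direction rests on $v(j)=12u$ with $1\le u\le v(2)-1$ (via Proposition \ref{prop.minext-twist}(b), Lemma \ref{le.supersing-va1-v2} and Lemma \ref{le.slkbound}), and the converse realizes each admissible $m$ by exactly the same choices of $(s,u)$ — $u=1$, $s=m+3$ for $m$ even or $m=2v(2)-3$, and $u=2$, $s=m+6$ for the remaining odd $m$ — combined with Lemmas \ref{le.jinv} and \ref{le.slkbound} and Theorem \ref{thm.supred}(a). Your repackaging of the exclusion $m\ne 2v(2)-5$ as the unsolvability of $s=2v(2)-5+3u$ within the admissible set of $s_{L/K}$ is equivalent to the paper's argument via $12\mid v(j)$, and your explicit remark that the twist has additive reduction is a point the paper leaves implicit.
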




\begin{proof}

	By Theorem \ref{thm.supred} (a), the reduction type of $E/K$ is $\mathrm{I}^{\ast}_{4m}$ with $m=(4s_{L/K}-v(j))/4$. Recall that Remark \ref{rk.gt12} shows that $v(j)\ge 12$. Therefore $m\le (-12+4s_{L/K})/4\le (-12+8v(2))/4=2v(2)-3$ by Lemma \ref{le.slkbound}. We claim that $m\ne 2v(2)-5$. Assume that $4s_{L/K}-v(j)=4(2v(2)-5)$. Then $s_{L/K}=2v(2)-5+v(j)/4\ge 2v(2)-2$ by Remark \ref{rk.gt12}. Hence $s_{L/K}=2v(2)-1$ or $s_{L/K}=2v(2)$ by Lemma \ref{le.slkbound}. Then $v(j)=16$ or $v(j)=20$. We get a contradiction in either case because $12\mid v(j)$ by Proposition \ref{prop.minext-twist} (b).

	Now we prove the converse direction. Let $m$ be an integer which satisfies (i) and (ii). If $m=2v(2)-3$, set $s:=m+3$. Assume that $1\le m< 2v(2)-5$ or $m=2v(2)-4$. If $m$ is odd, set $s:=m+6$. If $m$ is even, set $s:=m+3$.
	
	Let $u:=(s-m)/3$. If $v(2)=2$, then $m=1$ since $m$ satisfies (i). Because $m=2v(2)-3$, it follows that $s=m+3$ and $0<u=1<v(2)$. If $v(2)\ge 3$, then $0<u=(s-m)/3\le 2<v(2)$. So, we always have that $0<u<v(2)$.

	By Lemma \ref{le.jinv}, there exists an elliptic curve $E'/K$ with good supersingular reduction such that $v(j(E'))=12u$. Take $L$ to be a quadratic extension such that $s_{L/K}=s$ (see Lemma \ref{le.slkbound}). Let $E/K$ be the quadratic twist of $E'/K$ by $L/K$. Then $E_L/L$ has good supersingular reduction. By construction, $v(j(E))=4(s-m)\le 4s_{L/K}-4$. Note now that $4m=-12u+4s$. It follows that $E/K$ has reduction type $\mathrm{I}^{\ast}_{4m}$ by Theorem \ref{thm.supred} (a).
\end{proof}

\begin{corollary}\label{cor.v2_sm}
	Let $\mathcal{O}_K$ be as in \ref{em.intro} with mixed characteristic $2$. Let $E/K$ be an elliptic curve with additive reduction and good supersingular reduction after a quadratic extension $L/K$.
	\begin{enumerate}[label=(\alph*)]
		\item Assume that $v(2)=1$. Then the reduction type of $E/K$ can only be $\mathrm{II}$ or $\mathrm{II}^{\ast}$. 
		\item Assume that $v(2)=2$. Then the reduction type of $E/K$ can only be $\mathrm{I}^{\ast}_0,\mathrm{II}^{\ast}$ or $\mathrm{I}^{\ast}_4$.
		\item Assume that $v(2)=3$. Then the reduction type of $E/K$ can only be $\mathrm{II},\mathrm{I}^{\ast}_0,\mathrm{II}^{\ast},\mathrm{I}^{\ast}_8$ or $\mathrm{I}^{\ast}_{12}$.
	\end{enumerate}
	
	Conversely, assume that $v(2)=1\;(\text{resp. 2,3})$. Then for each reduction type $t$ in (a) (resp. (b),(c)), there exists an elliptic curve $E/K$ with good supersingular reduction after a quadratic extension and reduction type $t$. Moreover, if $v(2)\ge 4$, there exists such an elliptic curve with reduction type $\mathrm{II},\mathrm{I}^{\ast}_0$, or $\mathrm{II}^{\ast}$.
\end{corollary}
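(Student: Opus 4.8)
The plan is to combine Theorem \ref{thm.supred} with the constraints on $s_{L/K}$ from Lemma \ref{le.slkbound} and the classification of the $\mathrm{I}^{\ast}$ types already obtained in Corollary \ref{cor.Im}. Write $s:=s_{L/K}$. By Theorem \ref{thm.supred}, every such $E/K$ falls into exactly one of two cases: either $v(j)\le 4s-4$, in which case the type is some $\mathrm{I}^{\ast}_{4m}$, or $v(j)>4s-4$, in which case the type is $\mathrm{II}$, $\mathrm{I}^{\ast}_0$, or $\mathrm{II}^{\ast}$ according to whether $f=1,3,5$, where $f\in\{1,3,5\}$ is fixed by $2s+3\equiv f\bmod 6$. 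A direct computation shows that $f$ depends only on $s\bmod 3$: one has $f=3$ if $s\equiv 0$, $f=5$ if $s\equiv 1$, and $f=1$ if $s\equiv 2\pmod 3$, so that $E/K$ has type $\mathrm{I}^{\ast}_0$, $\mathrm{II}^{\ast}$, $\mathrm{II}$ respectively.

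For the forward (``can only be'') direction I would first enumerate, for each value of $v(2)\in\{1,2,3\}$, the admissible values of $s$ using Lemma \ref{le.slkbound}: $s$ is either $2v(2)$ (the even case) or an odd integer with $1\le s\le 2v(2)-1$. This gives $s\in\{1,2\}$, $\{1,3,4\}$, and $\{1,3,5,6\}$ in the three cases. For the branch $v(j)\le 4s-4$ I invoke Remark \ref{rk.gt12} ($v(j)\ge 12$): when $v(2)=1$ we have $4s-4\le 4<12$, so this branch is empty and only case (b) of Theorem \ref{thm.supred} can occur; when $v(2)=2$ or $3$, Corollary \ref{cor.Im} (whose hypothesis $v(2)\ge 2$ holds) already restricts the $\mathrm{I}^{\ast}$ types to $\mathrm{I}^{\ast}_4$ and to $\{\mathrm{I}^{\ast}_8,\mathrm{I}^{\ast}_{12}\}$ respectively. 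For the branch $v(j)>4s-4$ I read off the type from $s\bmod 3$ for each admissible $s$. Taking the union of the two branches reproduces exactly the lists in (a), (b), (c).

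For the converse (existence) I would treat the two kinds of type separately. The $\mathrm{I}^{\ast}_{4m}$ types (namely $\mathrm{I}^{\ast}_4$ for $v(2)=2$ and $\mathrm{I}^{\ast}_8,\mathrm{I}^{\ast}_{12}$ for $v(2)=3$) are produced by the converse part of Corollary \ref{cor.Im}, since the corresponding $m=1$ and $m\in\{2,3\}$ satisfy its conditions (i) and (ii). For the types $\mathrm{II}$, $\mathrm{I}^{\ast}_0$, $\mathrm{II}^{\ast}$ I would start from the supersingular curve $E'/K\colon y^2+a_3y=x^3$ with $a_3\in\mathcal{O}_K^{\ast}$, which has $j(E')=0$ and good supersingular reduction by Lemma \ref{le.supersing-va1-v2}(a); I then choose, via Lemma \ref{le.slkbound}, a quadratic extension $L/K$ with $s_{L/K}=s$ of the required residue mod $3$ (take $s=1,3,5$ to realize $f=5,3,1$, i.e. $\mathrm{II}^{\ast},\mathrm{I}^{\ast}_0,\mathrm{II}$, subject to $s\le 2v(2)-1$), and let $E/K$ be the quadratic twist of $E'/K$ by $L/K$. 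Then $E_L/L$ has good supersingular reduction, $E/K$ has additive reduction (if $E/K$ had good reduction, its twist $E'$ by the ramified extension $L/K$ would be additive, contradicting that $E'$ has good reduction), and $v(j(E))=v(0)=\infty>4s-4$, so Theorem \ref{thm.supred}(b) yields exactly the desired type. The same $s=1,3,5$ construction handles the final claim for $v(2)\ge 4$.

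I expect the main obstacle to be purely the case-by-case bookkeeping: verifying that the admissible $s$ and the resulting $f$-values produce neither fewer nor more types than claimed. The one genuine input beyond enumeration is the lower bound $v(j)\ge 12$ from Remark \ref{rk.gt12}, which is what forces the $\mathrm{I}^{\ast}$ branch to be empty when $v(2)=1$ and, through Corollary \ref{cor.Im}, pins down the admissible subscripts in the remaining cases.
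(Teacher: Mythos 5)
Your argument follows the paper's proof essentially step for step: the forward direction enumerates the admissible values of $s_{L/K}$ from Lemma \ref{le.slkbound}, uses $v(j)\ge 12$ to kill the $\mathrm{I}^{\ast}_{4m}$ branch when $v(2)=1$, defers to Corollary \ref{cor.Im} for the admissible subscripts when $v(2)\ge 2$, and reads off $\mathrm{II},\mathrm{I}^{\ast}_0,\mathrm{II}^{\ast}$ from $s_{L/K}\bmod 3$; the converse twists $y^2+a_3y=x^3$ by extensions with prescribed $s_{L/K}$ and invokes the converse of Corollary \ref{cor.Im}. Your table $f=3,5,1$ for $s\equiv 0,1,2\pmod 3$ is correct, and your case counts for (a), (b), (c) agree with the paper's.

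There is, however, one step in your converse that fails as written. You propose to realize the types $\mathrm{II}^{\ast},\mathrm{I}^{\ast}_0,\mathrm{II}$ by taking $s=1,3,5$ ``subject to $s\le 2v(2)-1$.'' When $v(2)=1$ this leaves only $s=1$, which produces $\mathrm{II}^{\ast}$; but part (a) also requires you to exhibit a curve of type $\mathrm{II}$, and no odd $s\le 2v(2)-1=1$ has $s\equiv 2\pmod 3$. The missing input is the even value $s=2v(2)=2$ permitted by Lemma \ref{le.slkbound}: since $2\equiv 2\pmod 3$ one gets $f=1$ and hence type $\mathrm{II}$, which is exactly how the paper handles this case (it fixes $s=1$ resp.\ $2$ when $v(2)=1$, and likewise uses $s=4=2v(2)$ as an alternative source of $\mathrm{II}^{\ast}$ when $v(2)=2$). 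So you should allow $s=2v(2)$ in your menu of twisting extensions, not only odd $s\le 2v(2)-1$. With that correction the proposal is complete; your explicit verification that the twist $E/K$ has additive reduction (because its twist back by the ramified extension has good reduction) is a small point the paper leaves implicit.
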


\begin{proof}
	Let $E'/K$ be the twist of $E/K$ by the quadratic extension $L/K$.
	By the proof of Theorem \ref{thm.supred}, we know that $E'/K$ has good supersingular reduction. Let $y^2+a_1 xy+a_3 y=x^3+a_2x^2+a_4x+a_6$ with $a_i\in\mathcal{O}_K$ be a minimal Weierstrass equation for $E'/K$.
	
	Assume that $v(2)=1$. We claim that $v(j)\ge 12$. If $v(j)> 12v(2)$, then $v(j)>12$ since $v(2)\ge 1$. If $v(j)\le 12v(2)$, then Proposition \ref{prop.minext-twist} (b) shows that $v(j(E))=12v(a_1)\ge 12$. Meanwhile, Lemma \ref{le.sl_k} shows that $s_{L/K}\le 2v(2)=2$. Hence $v(j(E))\ge 12> 4s_{L/K}-4$. By Lemma \ref{le.slkbound}, $s_{L/K}=1$ or $2$.
	If $s_{L/K}=1$, then $E/K$ has reduction type $\mathrm{II}^{\ast}$ and if $s_{L/K}=2$, then $E/K$ has reduction type $\mathrm{II}$ by Theorem \ref{thm.supred}.

	Assume that $v(2)=2$. If $v(j)\le 4s_{L/K}-4$, then the reduction type of $E/K$ can only be $\mathrm{I}^{\ast}_4$ by Corollary \ref{cor.Im}. Assume that $v(j)> 4s_{L/K}-4$. By Lemma \ref{le.slkbound}, $s_{L/K}=1,3$ or $4$. If $s=1$ or $4$, then $E/K$ has reduction type $\mathrm{II}^{\ast}$ and if $s=3$, then $E/K$ has reduction type $\mathrm{I}^{\ast}_0$ by Theorem \ref{thm.supred}.
	
	Assume that $v(2)=3$. If $v(j)\le 4s_{L/K}-4$, then the reduction type of $E/K$ can only be $\mathrm{I}^{\ast}_8$ or $\mathrm{I}^{\ast}_{12}$ by Corollary \ref{cor.Im}. Assume that $v(j)> 4s_{L/K}-4$, then the reduction type of $E/K$ can only be $\mathrm{II},\mathrm{I}^{\ast}_0$ or $\mathrm{II}^{\ast}$ by Theorem \ref{thm.supred}.

	We now show the converse direction. Let $a_3\in\mathcal{O}_K^*$ and let $E'/K$ to be the elliptic curve $y^2+a_3y=x^3$. Then $j(E')=0$.
	
	Assume that $v(2)=1$. Fix $s=1(\text{resp. } 2)$. There exists a quadratic extension $L/K$ such that $s_{L/K}=s$ by Lemma \ref{le.slkbound}. Let $E/K$ be the quadratic twist of $E'/K$ by the extension $L/K$. Then $E/K$ has reduction type $\mathrm{II}^{\ast}\,\,(\text{resp. } \mathrm{II})$.
	
	Assume that $v(2)=2$. Fix $s=1,3$ or $4$. There exists a quadratic extension $L/K$ such that $s_{L/K}=s$ by Lemma \ref{le.slkbound}. Let $E/K$ be the quadratic twist of $E'/K$ by the extension $L/K$. If $s=1$ or $4$, then $E/K$ has reduction type $\mathrm{II}^{\ast}$ and if $s=3$, then $E/K$ has reduction type $\mathrm{I}^{\ast}_0$ by Theorem \ref{thm.supred}.

	Assume that $v(2)\ge 3$. Fix $s=1(\text{resp. } 3,5)$. There exists a quadratic extension $L/K$ such that $s_{L/K}=s$ by Lemma \ref{le.slkbound}. Let $E/K$ be the quadratic twist of $E'/K$ by the extension $L/K$. Then $E/K$ has reduction type $\mathrm{II}^{\ast} \,\,(\text{resp. } \mathrm{I}^{\ast}_0, \mathrm{II})$ by Theorem \ref{thm.supred}.
	
	By Corollary \ref{cor.Im}, if $v(2)=2 (\text{ resp. 3})$, then there exists an elliptic curve $E/K$ with reduction type $\mathrm{I}^{\ast}_4$ (resp. $\mathrm{I}^{\ast}_8$ or $\mathrm{I}^{\ast}_{12}$) and good supersingular reduction after a quadratic extension. 
\end{proof}

\begin{lemma}
	Let $\mathcal{O}_K$ be as in \ref{em.intro} with equicharacteristic $2$. Then the following is true.
	
	\begin{enumerate}[label=(\alph*)]
		\item Let $m>0$ be an integer. There exists an elliptic curve over $K$ with reduction type $\mathrm{I}^{\ast}_{4m}$ and good supersingular reduction after a quadratic extension.
		\item There exists an elliptic curve over $K$ with reduction type $\mathrm{II}$ $(\text{resp. }\mathrm{I}^{\ast}_0$, $\mathrm{II}^{\ast}$) and good supersingular reduction after a quadratic extension. 
	\end{enumerate}
	
\end{lemma}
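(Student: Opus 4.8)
The plan is to mirror the converse directions of Corollary \ref{cor.Im} and Corollary \ref{cor.v2_sm}: construct an explicit elliptic curve with good supersingular reduction and prescribed valuation of its $j$-invariant, twist it by a carefully chosen quadratic extension, and read off the Kodaira type from Theorem \ref{thm.supred}. The decisive simplification over the mixed characteristic case is that here $\operatorname{char}(K)=2$, so $v(2)=\infty$; consequently the numerical obstructions coming from $v(2)$ in Corollaries \ref{cor.Im} and \ref{cor.v2_sm} disappear, and by Lemma \ref{le.sl_k}(a) every positive odd integer is realized as $s_{L/K}$ for a suitable Artin-Schreier extension (take $L/K$ to be the splitting field of $z^2+z+\pi^{-s}$ with $s>0$ odd, so that $s_{L/K}=s$).

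First I would record the equicharacteristic analog of Lemma \ref{le.jinv}. For any integer $u\ge 1$, the curve $E'/K$ given by $y^2+\pi^u xy+a_3 y=x^3$ with $a_3\in\mathcal{O}_K^*$ has good supersingular reduction by Lemma \ref{le.supersing-va1-v2}(a), since $a_3$ is a unit and $\pi^u\in\pi\mathcal{O}_K$. A direct computation in characteristic $2$ (where the terms $24a_1a_3$, $8b_4^3$ vanish) gives $c_4=a_1^4$ and $\Delta=a_3^3(a_1^3+a_3)$, whence $v(\Delta)=0$ and $v(j(E'))=3v(c_4)=12u$. Likewise, the curve $y^2+a_3y=x^3$ with $a_3\in\mathcal{O}_K^*$ has good supersingular reduction and $j=0$, which I will use for part (b).

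For part (a), given $m>0$, I would set $u:=1$ and $s:=m+3$ if $m$ is even, and $u:=2$ and $s:=m+6$ if $m$ is odd. In both cases $s$ is odd and positive, and the inequality $v(j)=12u\le 4s-4$ holds precisely because $m\ge 1$. Taking $E'/K$ with $v(j(E'))=12u$ as above, choosing $L/K$ with $s_{L/K}=s$, and letting $E/K$ be the quadratic twist of $E'/K$ by $L/K$, the base change $E_L/L$ becomes isomorphic to $E'_L/L$ and hence has good supersingular reduction, while $E/K$ (a ramified quadratic twist of a good-reduction curve) has additive reduction. Since $4s_{L/K}-v(j(E))=4s-12u=4m$, Theorem \ref{thm.supred}(a) yields reduction type $\mathrm{I}^{\ast}_{4m}$.

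For part (b), I start from the $j=0$ supersingular curve $E'/K$ and twist by the extension $L/K$ with $s_{L/K}=5,3,1$, respectively. Since $v(j(E))=v(j(E'))=\infty>4s_{L/K}-4$, Theorem \ref{thm.supred}(b) applies, and $2s_{L/K}+3\equiv 1,3,5 \pmod 6$ for $s_{L/K}=5,3,1$, giving reduction types $\mathrm{II}$, $\mathrm{I}^{\ast}_0$, $\mathrm{II}^{\ast}$, respectively. The only point requiring care is the characteristic-$2$ computation of $v(j(E'))=12u$ in the analog of Lemma \ref{le.jinv}, together with the parity and inequality bookkeeping that guarantees $s$ is odd and $v(j)\le 4s_{L/K}-4$; everything else follows formally from Theorem \ref{thm.supred} and Lemma \ref{le.sl_k}(a).
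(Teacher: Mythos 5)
Your proposal is correct and follows essentially the same route as the paper: realize a supersingular curve with $v(j)=12u$ (the equicharacteristic case of Lemma \ref{le.jinv}, which you verify directly), realize an arbitrary odd $s_{L/K}$ via an Artin--Schreier extension, twist, and read off the type from Theorem \ref{thm.supred}; your specific choices $u\in\{1,2\}$, $s=m+3u$ differ only cosmetically from the paper's choice of an odd $s>m$ with $3\mid(s-m)$. If anything, your explicit appeal to Lemma \ref{le.sl_k}(a) for the existence of the extension is slightly more careful than the paper's citation of Lemma \ref{le.slkbound}, which is stated only in mixed characteristic.
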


\begin{proof}
	(a) Let $s$ be an odd integer such that $m<s$ and $3\mid (s-m)$. Let $u:=4(s-m)$. Then $12\mid u$. By Lemma \ref{le.jinv}, there exists an elliptic curve $E'/K$ with good supersingular reduction such that $v(j(E'))=u$. By Lemma \ref{le.slkbound}, there exists a quadratic extension $L/K$ such that $s_{L/K}=s$. Let $E/K$ be the quadratic twist of $E'/K$ by the extension $L/K$. Then $v(j(E))\le 4s_{L/K}-4$ since $u\le 4s-4$. It follows that $E/K$ has reduction type $\mathrm{I}^{\ast}_{4m}$ by Theorem \ref{thm.supred} since $4s_{L/K}-v(j)=-u+4s=4s-(4s-4m)=4m$.
	
	(b) Let $a_3\in\mathcal{O}_K^*$. Take $E'/K$ to be the elliptic curve $y^2+a_3y=x^3$. Then $j(E')=0$. Let $s>0$ be an odd integer such that $2s+3\equiv 1 (\text{resp. 3,5}) \operatorname{mod} 6$. Then there exists a quadratic extension $L/K$ such that $s_{L/K}=s$ by Lemma \ref{le.slkbound}. Let $E/K$ be the quadratic twist of $E'/K$ by the extension $L/K$. Then $E/K$ has reduction type $\mathrm{II} \,\,(\text{resp. } \mathrm{I}^{\ast}_0, \mathrm{II}^{\ast})$.
\end{proof}

\section{Reduction type under isogeny}\label{sec.isog}

In this section, we study the relation between Kodaira types of     isogenous elliptic curves. The following lemma might be well-known, but we did not find a reference for it in the literature.

\begin{lemma}\label{le.supersing}
	Let $\mathcal{O}_K$ be a discrete valuation domain with field of fractions $K$. Let $E_1/K$ and $E_2/K$ be isogenous elliptic curves with good reduction. If $E_1/K$ has supersingular reduction, then $E_2/K$ has supersingular reduction.
\end{lemma}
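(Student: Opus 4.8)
The plan is to extend the given isogeny to the integral (Néron) models, reduce it modulo the maximal ideal to obtain an isogeny of the special fibers, and then invoke the fact that supersingularity is preserved under isogeny over a field of positive characteristic.

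First I would fix notation: let $\mathfrak{m}$ be the maximal ideal of $\mathcal{O}_K$ and $k=\mathcal{O}_K/\mathfrak{m}$ the residue field. Since the hypothesis that $E_1/K$ has supersingular reduction presupposes $\operatorname{char}(k)=p>0$, we work in this setting. Write $\phi\colon E_1\to E_2$ for the given isogeny, $d=\deg\phi$, and $\hat\phi\colon E_2\to E_1$ for the dual isogeny, so that $\hat\phi\circ\phi=[d]_{E_1}$ and $\phi\circ\hat\phi=[d]_{E_2}$. Because $E_1$ and $E_2$ have good reduction, their Néron models $\mathcal{E}_1,\mathcal{E}_2$ over $\mathcal{O}_K$ are elliptic curves (abelian schemes of relative dimension $1$). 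By the Néron mapping property, $\phi$ and $\hat\phi$ extend uniquely to homomorphisms $\Phi\colon\mathcal{E}_1\to\mathcal{E}_2$ and $\hat\Phi\colon\mathcal{E}_2\to\mathcal{E}_1$ of group schemes over $\mathcal{O}_K$; since $\hat\Phi\circ\Phi$ and $[d]_{\mathcal{E}_1}$ agree on the generic fiber and $\mathcal{E}_1$ is separated, they coincide, and likewise $\Phi\circ\hat\Phi=[d]_{\mathcal{E}_2}$.

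Next I would reduce modulo $\mathfrak{m}$. This yields homomorphisms $\bar\phi\colon\widetilde{E}_1\to\widetilde{E}_2$ and $\bar{\hat\phi}\colon\widetilde{E}_2\to\widetilde{E}_1$ of elliptic curves over $k$ with $\bar{\hat\phi}\circ\bar\phi=[d]_{\widetilde{E}_1}$. As $\widetilde{E}_1$ is an elliptic curve we have $[d]\neq 0$, so $\bar\phi\neq 0$; a nonzero homomorphism of elliptic curves is an isogeny, whence $\widetilde{E}_1$ and $\widetilde{E}_2$ are isogenous over $k$. The main point requiring care is precisely this middle step—ensuring that the reduced map $\bar\phi$ does not degenerate to the zero homomorphism—which is exactly what the extended dual-isogeny relation $\bar{\hat\phi}\circ\bar\phi=[d]\neq 0$ guarantees.

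Finally, it remains to show that supersingularity is an isogeny invariant for elliptic curves over $k$. Since this is a geometric property, I would base change to $\bar k$ and argue that the $p$-rank is preserved: using the functoriality of the connected–étale sequence of the $p$-divisible group $\widetilde{E}_i[p^\infty]$, the isogeny $\bar\phi$ induces an isogeny on the étale quotients, so $\dim_{\mathbb{F}_p}\widetilde{E}_1[p](\bar k)=\dim_{\mathbb{F}_p}\widetilde{E}_2[p](\bar k)$. As $\widetilde{E}_1$ is supersingular this common dimension is $0$, and hence $\widetilde{E}_2$ is supersingular as well. Equivalently, one may note that $\operatorname{End}^0$ is an isogeny invariant and that supersingularity over $\bar k$ is detected by $\operatorname{End}^0$ being a quaternion algebra, so the noncommutativity of $\operatorname{End}^0(\widetilde{E}_{1,\bar k})$ transfers to $\operatorname{End}^0(\widetilde{E}_{2,\bar k})$.
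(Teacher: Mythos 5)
Your proposal is correct and follows the same overall strategy as the paper: extend $\phi$ and its dual to the N\'eron models via the N\'eron mapping property, observe that the reduced map $\bar\phi$ is nonzero because $\bar{\hat\phi}\circ\bar\phi=[d]\neq 0$, and then invoke the isogeny invariance of supersingularity for the special fibers. The only divergence is in how that last invariance is established: the paper writes down the commutative square intertwining $\bar\phi$ with multiplication by $p$ on $\widetilde{E}_1$ and $\widetilde{E}_2$, compares inseparable degrees to get $\deg_i([p]_{\widetilde{E}_1})=\deg_i([p]_{\widetilde{E}_2})$, and uses the criterion that an elliptic curve in characteristic $p$ is supersingular iff $\deg_i([p])=p^2$; you instead appeal to the invariance of the $p$-rank under isogeny (via the connected--\'etale sequence of the $p$-divisible group), or alternatively to Deuring's characterization of supersingularity by $\operatorname{End}^0$ being a quaternion algebra. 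All three closing arguments are standard and valid; the paper's is the most self-contained in that it only needs degrees of morphisms of curves, while yours imports slightly heavier (but equally classical) machinery. No gaps.
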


\begin{proof}
	Let $k$ be the residue field of $\mathcal{O}_K$ and let $p$ be the characteristic of $k$. Let $\phi:E_1/K\rightarrow E_2/K$ be an isogeny over $K$ with dual isogeny $\psi:E_2/K\rightarrow E_1/K$. Let $d$ be the degree of $\phi$. Then $\psi\circ \phi$ is the multiplication by $d$ map on $E_1/K$. Let $\mathcal{E}_i/\mathcal{O}_K$ be the Néron model of $E_i/K$ for $i=1,2$. By the Néron mapping property, the morphisms $\phi$ (resp. $\psi$) extends uniquely to a morphism $\Phi:\mathcal{E}_1/\mathcal{O}_K\rightarrow\mathcal{E}_2/\mathcal{O}_K$ (resp. $\Psi:\mathcal{E}_2/\mathcal{O}_K\rightarrow\mathcal{E}_1/\mathcal{O}_K$) over $\mathcal{O}_K$. Let $\widetilde{E_i}/k$ be the special fiber of $\mathcal{E}_i/\mathcal{O}_K$ for $i=1,2$. Let $\widetilde{\phi}: \widetilde{E_1}/k\rightarrow \widetilde{E_2}/k$ (resp. $\widetilde{\psi}: \widetilde{E_2}/k\rightarrow \widetilde{E_1}/k$) be the induced morphism of $\Phi$ (resp. $\Psi$) on the special fibers. Then $\widetilde{\psi}\circ \widetilde{\phi}$ is the multiplication by $d$ map on $\widetilde{E_1}/k$. In particular, $\widetilde{\phi}$ is not the constant map. Let $[p_i]$ be the multiplication by $p$ map on $\widetilde{E_i}/k$ for $i=1,2$. Consider the following commutative diagram.
	
	$$
	\begin{CD}
		\widetilde{E_1}	@>\widetilde{\phi}>> \widetilde{E_2}\\
		@V[p_1]VV			@VV[p_2]V\\
		\widetilde{E_1}	@>\widetilde{\phi}>> 	 \widetilde{E_2}\\
	\end{CD}
	$$
	
	Let $\operatorname{deg}_i$ denotes the inseparable degree of a morphism. Then $\operatorname{deg}_i(\widetilde{\phi}\circ[p_1])=\operatorname{deg}_i([p_2]\circ\widetilde{\phi})$. So $\operatorname{deg}_i([p_1])=\operatorname{deg}_i([p_2])$. Meanwhile, an elliptic curve over a field of characteristic $p$ is supersingular if and only if $\operatorname{deg}_i([p])=p^2$ (see \cite{SilvermanArith} V.3 Theorem 3.1). Hence the conclusion follows. 
\end{proof}

\begin{corollary}\label{cor.sameIn}
	Let $\mathcal{O}_K$ be as in \ref{em.intro}. Let $E_1/K$ and $E_2/K$ be isogenous elliptic curves with additive reduction. Assume that $E_1/K$ and $E_2/K$ have good supersingular reduction after a quadratic extension $L/K$. Assume that $v(j(E_1))> 4s_{L/K}-4$ and $v(j(E_2))> 4s_{L/K}-4$. Then $E_1/K$ and $E_2/K$ have the same reduction type. 
	
	The latter condition is automatically satisfied when $v(2)=1$. So in this case $E_1/K$ and $E_2/K$ have the same reduction type and the reduction type is either $\mathrm{II}$ or $\mathrm{II}^{\ast}$.

\end{corollary}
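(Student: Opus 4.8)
The plan is to reduce the entire statement to part (b) of Theorem \ref{thm.supred}, exploiting the fact that the Kodaira type produced there depends only on the arithmetic invariant $s_{L/K}$ and not on the individual curve. Under the hypothesis $v(j(E_i))>4s_{L/K}-4$ for $i=1,2$, both $E_1/K$ and $E_2/K$ fall squarely into case (b), where the type is dictated by the residue $f\in\{1,3,5\}$ determined by $2s_{L/K}+3\equiv f\pmod 6$. This is the only feature of Theorem \ref{thm.supred} I expect to use.

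First I would observe that, by hypothesis, $E_1/K$ and $E_2/K$ acquire good supersingular reduction after one and the same quadratic extension $L/K$, so the quantity $s_{L/K}$ is common to both curves. (This is precisely where the isogeny hypothesis earns its place: isogenous curves acquire good reduction over exactly the same extensions, and Lemma \ref{le.supersing} propagates supersingularity across the isogeny over $L$, so a common $L$ is indeed available; within the stated hypotheses it is simply given.) Since $s_{L/K}$ is shared, the residue $f$ is shared, and Theorem \ref{thm.supred}(b) then forces $E_1/K$ and $E_2/K$ to have the same Kodaira type. This settles the first assertion.

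For the second assertion I would specialize to $v(2)=1$ and check that the case-(b) hypothesis is then automatic. By Lemma \ref{le.slkbound}, when $v(2)=1$ the only possibilities are $s_{L/K}=1$ (the odd case, which forces $1\le s_{L/K}\le 2v(2)-1=1$) and $s_{L/K}=2$ (the even case, $s_{L/K}=2v(2)$), so $4s_{L/K}-4\le 4$. On the other hand, $j(E_i)$ is a quadratic-twist invariant and so agrees with the $j$-invariant of the good-supersingular twist of $E_i/K$; Remark \ref{rk.gt12} therefore yields $v(j(E_i))\ge 12$. Hence $v(j(E_i))\ge 12>4\ge 4s_{L/K}-4$, both curves lie in case (b), and the first assertion applies. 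Reading off $f$: for $s_{L/K}=1$ one has $2s_{L/K}+3=5\equiv 5$, giving $f=5$ and type $\mathrm{II}^{\ast}$; for $s_{L/K}=2$ one has $2s_{L/K}+3=7\equiv 1$, giving $f=1$ and type $\mathrm{II}$. In either case the common type is $\mathrm{II}$ or $\mathrm{II}^{\ast}$.

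I do not anticipate a genuine obstacle: the substantive content is already carried by Theorem \ref{thm.supred}, and the argument is essentially the remark that case (b) is insensitive to the curve. The only points requiring a little care are the twist-invariance of the $j$-invariant, which is what legitimizes invoking Remark \ref{rk.gt12} on curves that themselves have only additive reduction, and the two small congruence computations pinning down $f$ for the admissible values $s_{L/K}\in\{1,2\}$ when $v(2)=1$.
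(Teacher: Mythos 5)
Your proof is correct and follows essentially the same route as the paper: the first assertion is exactly the paper's argument (shared $L$ hence shared $s_{L/K}$, and Theorem \ref{thm.supred}(b) depends only on $s_{L/K}$), and for the second assertion you simply inline the content of Corollary \ref{cor.v2_sm}(a) — namely $s_{L/K}\in\{1,2\}$ via Lemma \ref{le.slkbound}, $v(j)\ge 12$ via the good supersingular twist and Remark \ref{rk.gt12}, and the two congruence checks — where the paper just cites that corollary. No gaps.
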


\begin{proof}
	It is proved in \cite{Grothendieck}, IX, Cor. 2.2.7, that the extension $L/K$ is invariant under isogeny. Having supersingular reduction is also invariant under isogeny by Lemma \ref{le.supersing}. Therefore, if $E_{1,L}/L$ has good supersingular reduction where $L/K$ is a quadratic extension, then $E_{2,L}/L$ has good supersingular reduction. By Theorem \ref{thm.supred} (b), $E_1/K$ and $E_2/K$ have the same reduction type, since the reduction type only depends on $s_{L/K}$ under the assumption.
	
	If $K$ has mixed characteristic $2$ and $v(2)=1$, then the condition $v(j(E_1))> 4s_{L/K}-4$ and $v(j(E_2))> 4s_{L/K}-4$ holds automatically by Corollary \ref{cor.v2_sm} (a) and Theorem \ref{thm.supred}. So $E_1/K$ and $E_2/K$ have the same reduction type. The reduction type is either $\mathrm{II}$ or $\mathrm{II}^{\ast}$ by Corollary \ref{cor.v2_sm} (a).
\end{proof}

\begin{remark}\label{rk.counteregisog}

	Let $\mathcal{O}_K$ be as in \ref{em.intro}. The following example shows that the reduction type of  isogenous elliptic curves over $K$ can be different if $v(2)\ge 2$.

	Let $E_1'/\mathbb{Q}_2^{unr}$ be the elliptic curve defined by $y^2=x^3+1$. Let $K/\mathbb{Q}_2^{unr}$ be the extension given by $x^3-2$ so that $v_K(2)=3$. Then the base change $E_{1,K}'/K$ has good reduction. Let $L/K$ be a quadratic extension such that $s_{L/K}=2v_K(2)$ (see Lemma \ref{le.slkbound}). Assume that $L=K(\sqrt{D})$ for some $D\in K$. Let $E_1/K$ be the quadratic twist of $E_{1,K}'/K$ by the extension $L/K$. Then $E_1/K$ can be given by $y^2=x^3+D^3$. The group $E_1(K)$ has an order $2$ subgroup $T$ generated by the point $(-D,0)$.

	Applying Vélu's formulas \cite{Velu} to $E_1/K$ with respect to $T$, we get an elliptic curve $E_2/K$ defined by $y^2=x^3-15D^2x+22D^3$ and a $2$-isogeny $\phi$ from $E_1/K$ to $E_2/K$ such that $\operatorname{Ker}(\phi)(K)=T$. The $j$-invariants are $j(E_1)=0$ and $j(E_2)=2^4 3^3 5^3$. We have $v_K(j(E_1))> 4s_{L/K}-4$ and hence $E_1/K$ has reduction type $\mathrm{II},\mathrm{I}^{\ast}_0$ or $\mathrm{II}^{\ast}$ by Theorem \ref{thm.supred} (b). However, $v_K(j(E_2))=4v_K(2)\le 8v_K(2)-4=4s_{L/K}-4$ and hence $E_2/K$ has reduction type $\mathrm{I}^{\ast}_{4v_K(2)}$ by Theorem \ref{thm.supred} (a). So the reduction types of $E_1/K$ and $E_2/K$ are different in this case.

\end{remark}

We recall the following result when $K$ is a finite extension of $\mathbb{Q}_p$.

\begin{theorem}[T. Dokchitser and V. Dokchitser \cite{Dokchitser-Dokchitser} Theorem 5.4 (1)]\label{thm.Dok}		Let $K$ be a finite extension of $\mathbb{Q}_p$ and let $\phi: E_1\rightarrow E_2$ be an isogeny of elliptic curves over $K$ of prime degree $\ell\ne p$. Then the Kodaira types of $E_1/K$ and $E_2/K$ are the same.
\end{theorem}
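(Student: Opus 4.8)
The plan is to reduce the equality of Kodaira types to an isomorphism of Galois representations and then to recover the Kodaira type from that representation in the case at hand. First I would fix an auxiliary prime $q\neq p$. Since $\phi\colon E_1\to E_2$ is an isogeny, its kernel is finite and hence dies in the rational Tate modules, so $\phi$ induces a $G_K$-equivariant isomorphism $\phi_*\colon V_q(E_1)\xrightarrow{\sim}V_q(E_2)$. Thus $E_1/K$ and $E_2/K$ have isomorphic $q$-adic Galois representations $\rho_{E_1,q}\cong\rho_{E_2,q}$, and the whole argument consists in showing that the Kodaira type is a function of this representation.

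Next I would read off the coarse reduction data. By the criterion of N\'eron--Ogg--Shafarevich (valid since $q\neq p$), $E_i$ has good reduction precisely when $\rho_{E_i,q}$ is unramified, and in general the image $\rho_{E_i,q}(I_K)$ of the inertia group determines whether the reduction is potentially good or potentially multiplicative; in the potentially good case this image is finite, of order the semistability defect. In particular $E_1$ and $E_2$ share the same potential reduction type. Moreover the minimal extension $L/K$ attaining semistable reduction is cut out by $\rho_{E_i,q}|_{I_K}$ and is therefore the same for both curves; this is the isogeny-invariance of $L/K$ already exploited in Corollary~\ref{cor.sameIn}, following \cite{Grothendieck}, IX, Cor.~2.2.7. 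Hence $s_{L/K}$ also agrees.

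The heart of the matter is the potentially good case, which is the one relevant here. Here $\rho_{E_i,q}(I_K)$ is finite, and since $q\neq p$ the reduction map embeds it faithfully into the automorphism group of the good reduction acting on its $q$-adic Tate module, so the conjugacy class of $\rho_{E_i,q}|_{I_K}$ in $\operatorname{GL}_2(\mathbb{Q}_q)$, together with the $\operatorname{Gal}(L/K)$-action, is an invariant of the representation. I would then invoke the classification of Kodaira types of potentially good curves in terms of this inertial action: the order of the image separates the families $\{\mathrm{I}^{\ast}_0\}$, $\{\mathrm{IV},\mathrm{IV}^{\ast}\}$, $\{\mathrm{III},\mathrm{III}^{\ast}\}$, $\{\mathrm{II},\mathrm{II}^{\ast}\}$, and within each family the two types are twists of one another by a ramified character, which changes $\rho_q$. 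Since $\rho_{E_1,q}\cong\rho_{E_2,q}$ forces conjugate inertial representations, $E_1$ and $E_2$ acquire the same Kodaira type.

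The step I expect to be the main obstacle is exactly this last one when the residue characteristic is $2$ or $3$, where the inertia action is wildly ramified and the clean ``order of image plus sign'' classification must be replaced by a finer analysis. In the present setting this is governed by Theorem~\ref{thm.supred}: the type is determined by $s_{L/K}$, already seen to be isogeny-invariant, and by $v(j)$, so one is reduced to proving that $v(j)$ is preserved. This holds because an isogeny of degree prime to $p$ is an isomorphism on $p$-divisible groups, hence on formal groups, which preserves the invariant $v(a_1)$ and therefore $v(j)=12\,v(a_1)$ of Lemma~\ref{le.supersing-va1-v2}(b). By contrast, for potentially multiplicative reduction $v(j)$ need not be preserved, so the argument is cleanest precisely in the potentially good regime in which the result is applied.
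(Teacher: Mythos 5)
This statement is quoted from Dokchitser--Dokchitser and the paper offers no proof of it, so there is nothing internal to compare against; I am judging your proposal on its own terms. Its central premise is false: the Kodaira type is \emph{not} a function of the $q$-adic Galois representation, nor even of the whole system $\{V_q(E)\}_q$. Any isogeny $\phi\colon E_1\to E_2$ induces $G_K$-equivariant isomorphisms $V_q(E_1)\cong V_q(E_2)$ for \emph{every} prime $q$ (including $q=\deg\phi$ and $q=p$), yet part (2) of the very theorem you are proving --- and, inside this paper, Remark \ref{rk.counteregisog} and Proposition \ref{prop.sametype2} --- exhibit $2$-isogenous curves over a $2$-adic field with different Kodaira types. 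So no argument whose only input is $\rho_{E,q}$ can succeed; the hypothesis $\ell\neq p$ must enter through an invariant that the Galois representation does not see. Even in the tame case $p\ge 5$ your resolution of the within-family ambiguity is circular: types $\mathrm{II}$ and $\mathrm{II}^{\ast}$ have \emph{isomorphic} inertia representations on $V_q$ (a generator acts with eigenvalue set $\{\zeta_6,\zeta_6^{-1}\}$ in both cases), so the observation that a ramified twist ``changes $\rho_q$'' does not separate two isogenous curves lying in the same family. (Also, the families under ramified quadratic twist are $\{\mathrm{II},\mathrm{IV}^{\ast}\}$, $\{\mathrm{III},\mathrm{III}^{\ast}\}$, $\{\mathrm{IV},\mathrm{II}^{\ast}\}$, $\{\mathrm{I}_0,\mathrm{I}_0^{\ast}\}$, not the ones you list.)

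The genuine content, which you only gesture at in your final paragraph, is the extra numerical invariant: for $\deg\phi$ prime to $p$ the pullback of N\'eron differentials satisfies $\phi^{*}\omega_{E_2}=c\,\omega_{E_1}$ with $v(c)=0$ (because $c\,c'=\deg\phi\in\mathcal{O}_K^{*}$, where $c'$ comes from the dual isogeny); equivalently $\phi$ induces an isomorphism of the formal groups of the N\'eron models. This is what forces $v(\Delta_{\min})$ --- and, in the setting of this paper, $v(a_1)$ and hence $v(j)$ --- to be preserved, and it is this datum together with the inertia action that pins down the type. As written, your sketch of this step is restricted to the residue-characteristic-$2$ supersingular case, so it does not prove the theorem as stated for general $p$, $\ell$, and reduction type; and even there, the assertion that an isomorphism of formal groups preserves $v(a_1)$ of a minimal model is not automatic, since $a_1$ is a coordinate-dependent coefficient (one must argue, e.g., via the coefficient of $T^2$ in $[2](T)$, which equals $-a_1$ up to units under admissible changes of coordinates). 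In short: discard the reduction to Galois representations as the organizing principle, and build the proof around the unit-N\'eron-differential (equivalently, formal-group) invariance instead.
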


We can use this Theorem in the case $p=2$ to obtain:

\begin{corollary}\label{cor.samejinv}
	Let $K$ be a finite extension of $\mathbb{Q}_2$ and let $\phi: E_1\rightarrow E_2$ be an isogeny over $K$ of odd degree. Assume that $E_1/K$ and $E_2/K$ have additive reduction and good supersingular reduction after a quadratic extension $L/K$. If $v(j(E_1))\le 4s_{L/K}-4$, then $v(j(E_1))=v(j(E_2))$.
\end{corollary}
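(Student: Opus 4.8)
The plan is to combine the Dokchitser--Dokchitser invariance of Kodaira type under odd isogenies with the two places in the excerpt that pin down the exponent $n$ of a type $\mathrm{I}^\ast_n$ fiber in terms of $s_{L/K}$ and the valuation of the $j$-invariant. The key observation is that once we know $E_1/K$ and $E_2/K$ share the same Kodaira type $\mathrm{I}^\ast_n$ and both acquire good supersingular reduction over the \emph{same} quadratic extension $L/K$, the formula $n = 4s_{L/K}-v(j)$ must hold for each of them, forcing the two valuations to agree.

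First I would reduce Theorem \ref{thm.Dok} from prime degree to odd degree. The isogeny $\phi$ factors over $K$ as a composition of isogenies of odd prime degree, obtained by quotienting successively by $K$-rational prime-order subschemes of $\operatorname{Ker}\phi$. Since $p=2$ and each prime $\ell$ occurring is odd, we have $\ell\ne p$ at every step, so Theorem \ref{thm.Dok} applies to each factor and shows the Kodaira type is unchanged across it. Composing, $E_1/K$ and $E_2/K$ have the same Kodaira type. (Alternatively, one notes that the intermediate curves need no special hypotheses, since Theorem \ref{thm.Dok} is a statement about Kodaira types alone.)

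Next I would identify that common type. Because $v(j(E_1))\le 4s_{L/K}-4$ and $E_1/K$ has good supersingular reduction after the quadratic extension $L/K$, Theorem \ref{thm.supred}(a) gives that $E_1/K$ has reduction type $\mathrm{I}^\ast_n$ with $n = 4s_{L/K}-v(j(E_1))$; moreover $n\ge 4\ge 1$. By the previous paragraph, $E_2/K$ then also has reduction type $\mathrm{I}^\ast_n$ with this same $n\ge 1$. Finally, since $E_2/K$ acquires good supersingular, hence semistable, reduction after the quadratic extension $L/K$, Corollary \ref{cor.I_ms} applies to $E_2/K$ and yields $n = 4s_{L/K}-v(j(E_2))$. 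Comparing the two expressions for $n$ gives $v(j(E_1))=v(j(E_2))$.

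I expect no serious obstacle here, as the argument is essentially bookkeeping with the already-established results. The only points requiring a word of care are the factorization step that legitimizes invoking Theorem \ref{thm.Dok} in odd (not merely prime) degree, and verifying that the hypotheses of Corollary \ref{cor.I_ms} genuinely hold for $E_2/K$, namely that $E_2/K$ has type $\mathrm{I}^\ast_n$ with $n\ge 1$ and becomes semistable over $L$; both follow immediately from the assumptions and from the inequality $n\ge 4$.
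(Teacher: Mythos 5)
Your argument is correct and follows essentially the same route as the paper: apply Theorem \ref{thm.supred}(a) to $E_1$, transfer the Kodaira type to $E_2$ via the Dokchitser--Dokchitser theorem, and read off $v(j(E_2))$ from the resulting $\mathrm{I}^\ast_n$ type. The only cosmetic differences are that you route the last step through Corollary \ref{cor.I_ms} rather than re-invoking Theorem \ref{thm.supred}(a) directly, and that you explicitly justify the reduction from odd degree to odd prime degree, which the paper takes for granted.
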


\begin{proof}
	Since $v(j(E_1))\le 4s_{L/K}-4$, Theorem \ref{thm.supred} (a) shows that $E_1/K$ has reduction type $\mathrm{I}^{\ast}_{-v(j(E_1))+4s_{L/K}}$. So $E_2/K$ has reduction type $\mathrm{I}^{\ast}_{-v(j(E_1))+4s_{L/K}}$ by Theorem \ref{thm.Dok}. Theorem \ref{thm.supred} (a) implies that $v(j(E_2))\le 4s_{L/K}-4$ and $E_2/K$ has reduction type $\mathrm{I}^{\ast}_{-v(j(E_2))+4s_{L/K}}$. Therefore $v(j(E_1))=v(j(E_2))$.
\end{proof}

To determine the reduction types of isogenous elliptic curves over any discrete valuation field $K$ of residue characteristic $2$ when $v(2)\ge 2$, we propose the following conjecture and Proposition \ref{prop.sametype}. Recall that $s_{L/K}\le 2v(2)$ (see \ref{le.slkbound}). Conjecture \ref{conj.eqjinv} generalizes Corollary \ref{cor.samejinv}.

\begin{conjecture}\label{conj.eqjinv}
	Let $\mathcal{O}_K$ be as in \ref{em.intro}. Let $E_1/K$ and $E_2/K$ be elliptic curves and assume that there exists an isogeny of odd degree $d$ from $E_1/K$ to $E_2/K$. Assume that $0< v(j(E_1))< 15v(2)$. Then $ v(j(E_1))=v(j(E_2))$. 
\end{conjecture}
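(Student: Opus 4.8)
The plan is to reduce to the case of good reduction, recast the statement as the invariance of the valuation of the Hasse invariant, and then exploit the fact that an isogeny of degree prime to the residue characteristic induces an \emph{isomorphism} of formal groups. Since $v(j(E_1))>0$, the curve $E_1/K$ cannot have potentially multiplicative reduction, so it has potentially good reduction; as $\overline{j(E_1)}=0$ is the only supersingular value in characteristic $2$, the reduction is potentially good supersingular, and by Lemma \ref{le.supersing} so is that of $E_2$. I would pick a finite extension $K'/K$ over which both curves have good reduction and set $e=e(K'/K)$. The normalized valuation $v'$ of $K'$ restricts to $e\cdot v$ on $K$, so the hypothesis reads $0<v'(j(E_1))<15\,v'(2)$ and the conclusion $v(j(E_1))=v(j(E_2))$ is equivalent to $v'(j(E_1))=v'(j(E_2))$. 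I therefore replace $K$ by $K'$ and assume throughout that $E_1$ and $E_2$ have good supersingular reduction, with minimal models as in Lemma \ref{le.supersing-va1-v2}.

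Next I would construct the formal-group isomorphism. Let $\mathcal{E}_1,\mathcal{E}_2$ be the smooth models over $\mathcal{O}_K$, let $\phi\colon E_1\to E_2$ be the isogeny of odd degree $d$ and $\psi$ its dual, so $\psi\circ\phi=[d]$. By the N\'eron mapping property both extend to $\mathcal{O}_K$, and completing along the zero sections yields homomorphisms of formal groups $\hat\phi\colon\widehat{\mathcal{E}}_1\to\widehat{\mathcal{E}}_2$ and $\hat\psi$ with $\hat\psi\circ\hat\phi=[d]_{\widehat{\mathcal{E}}_1}$. Since $d$ is a unit in $\mathcal{O}_K$, the map $[d]$ is an automorphism of each one-dimensional formal group, so $\hat\phi$ is an isomorphism with $\hat\phi(T)=uT+O(T^2)$ for a unit $u$. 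Because $v(\hat\phi(x))=v(x)$ for $x$ in the maximal ideal, $\hat\phi$ maps the nonzero $2^n$-torsion of $\widehat{\mathcal{E}}_1$ bijectively onto that of $\widehat{\mathcal{E}}_2$, preserving valuations. Conjugating $[2^n]$ by $\hat\phi$ then shows that the Newton polygons of $[2^n]_{\widehat{\mathcal{E}}_1}$ and $[2^n]_{\widehat{\mathcal{E}}_2}$ agree for all $n$; equivalently, the two formal groups have identical multisets of torsion valuations.

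It then remains to recover $v(j)$ from this common data. By Lemma \ref{le.supersing-va1-v2}(a) the coefficient $a_1^{(i)}$ of the minimal model of $E_i$ is a lift of the Hasse invariant. In the canonical-subgroup range $v(a_1^{(i)})<\tfrac{2}{3}v(2)$ the Newton polygon of $[2]_{\widehat{\mathcal{E}}_i}$ reads off $v(a_1^{(i)})$ from the valuations of the nonzero $2$-torsion points, and there Lemma \ref{le.supersing-va1-v2}(b) gives $v(j(E_i))=12\,v(a_1^{(i)})$. Combined with the previous paragraph, this already yields $v(a_1^{(1)})=v(a_1^{(2)})$, hence $v(j(E_1))=v(j(E_2))$, whenever $v(j(E_1))<8\,v(2)$.

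I expect the main obstacle to be the deeper supersingular range $\tfrac{2}{3}v(2)\le v(a_1)$, equivalently $v(j)\ge 8\,v(2)$, where two difficulties compound. First, once $v(a_1)\ge\tfrac{2}{3}v(2)$ the level-one Newton polygon of $[2]$ is governed by the $T^3$-coefficient rather than by $a_1$, so recovering $v(a_1)$ from the torsion data preserved by $\hat\phi$ requires the higher maps $[2^n]$ and a finer canonical-subgroup analysis. Second, once $v(a_1)\ge v(2)$ one has $v(j)=3\,v(c_4)$ with $c_4=a_1^4-24a_1a_3-48a_4+8a_1^2a_2+16a_2^2$ governed by a competition among its terms, so $v(j)$ a priori involves $a_2,a_3,a_4$ and not the formal group alone. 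The bound $15\,v(2)$ should mark exactly the sub-range in which the term $24a_1a_3$, with $a_3$ a unit by Lemma \ref{le.supersing-va1-v2}(a), still dominates $c_4$ and produces the clean formula $v(j)=9\,v(2)+3\,v(a_1)$. Showing that the torsion valuations preserved by $\hat\phi$ nonetheless pin down $v(j)$ throughout $8\,v(2)\le v(j)<15\,v(2)$ --- via the maps $[2^n]$, or via a direct Newton-polygon analysis of the modular equation $\Phi_d(j(E_1),Y)=0$ at the supersingular point --- is the crux that keeps the statement conjectural.
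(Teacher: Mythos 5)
The statement you set out to prove is stated in the paper as Conjecture \ref{conj.eqjinv}; the paper contains no proof of it. The only evidence the paper offers is Proposition \ref{prop.vj}, which cites an external verification for all odd $d\le 400$ obtained by examining the $2$-adic valuations of the coefficients of the classical modular polynomials $\Phi_d$ --- a completely different route from yours. So there is no proof in the paper to compare against, and your proposal, which explicitly stops short of a complete argument, cannot be faulted for failing to settle an open statement. What can be assessed is how far your formal-group strategy actually gets and whether the part you do claim is sound.

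The reduction to good supersingular reduction over an extension $K'$, and the observation that an isogeny of odd degree induces an isomorphism of the formal groups over $\mathcal{O}_{K'}$ (since $\hat\psi\circ\hat\phi=[d]$ with $d$ a unit), are both correct, and your torsion-valuation argument does prove the conclusion for $0<v(j(E_1))<8v(2)$. You can in fact do better with less machinery: writing $[2](T)=2T-a_1T^2+O(T^3)$ for the formal group of a minimal model, conjugating by an isomorphism $f(T)=uT+\cdots$ sends the $T^2$-coefficient to $-a_1u^{-1}+2(\cdots)$, so $v(a_1)$ is an isomorphism invariant whenever $v(a_1)<v(2)$; combined with Lemma \ref{le.supersing-va1-v2} (b) this yields the conjecture for the whole range $0<v(j(E_1))<12v(2)$ with no Newton-polygon analysis. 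The genuine gap is therefore the window $12v(2)\le v(j(E_1))<15v(2)$, and there your heuristic is not correct: once $v(a_1)\ge v(2)$ the term $24a_1a_3$ in $c_4$ has valuation $3v(2)+v(a_1)\ge 4v(2)$ and cannot strictly dominate $a_1^4$, $16a_2^2$ and $48a_4$ simultaneously (strict dominance would force both $v(a_1)>v(2)$ and $v(a_1)<v(2)$), so the proposed formula $v(j)=9v(2)+3v(a_1)$ fails and $v(j)$ genuinely depends on $a_2,a_4$, i.e.\ on data not visible in the formal group coefficient $a_1$ alone. That the threshold $15v(2)$ is sharp (see the $d=3$ example after Proposition \ref{prop.sametype}, with $v(j_1)=15$ and $j_2=0$) is, in the paper's approach, a property of the modular polynomial rather than of the formal group; closing that last window is exactly what keeps the statement a conjecture.
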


\begin{proposition}[\cite{Wang_mod}, Proposition $\refverification$]\label{prop.vj}
	Conjecture \ref{conj.eqjinv} is true for all odd $d\le\bd$.
\end{proposition}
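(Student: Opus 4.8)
The plan is to translate the isogeny hypothesis into a single algebraic relation between $j(E_1)$ and $j(E_2)$ and then read off valuations from a Newton polygon. First I would reduce to the case of a \emph{cyclic} isogeny. Since multiplication-by-$m$ maps do not change the $j$-invariant, a degree-$d$ isogeny $\phi\colon E_1\to E_2$ factors (over $\widebar{K}$) as $E_1\xrightarrow{[a]}E_1\to E_2$, with the second map cyclic of some odd degree $d'\mid d$; indeed, if $\ker\phi\cong\mathbb{Z}/a\times\mathbb{Z}/b$ with $a\mid b$, then $E_1[a]\subseteq\ker\phi$. Thus the pair $(j(E_1),j(E_2))$ is linked by a cyclic isogeny of odd degree $d'\le d\le\bd$, so that $\Phi_{d'}(j(E_1),j(E_2))=0$, where $\Phi_{d'}(X,Y)\in\mathbb{Z}[X,Y]$ is the classical modular polynomial. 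I would also note that, $d$ being odd, $d'$ is a unit in the residue field, so no phenomenon special to characteristic $2$ enters through the degree itself.

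Next I would fix $\alpha:=v(j(E_1))$ with $0<\alpha<15v(2)$ and analyze $F(Y):=\Phi_{d'}(j(E_1),Y)=\sum_b C_bY^b\in K[Y]$, where $C_b=\sum_a c_{a,b}\,j(E_1)^a$ and the $c_{a,b}\in\mathbb{Z}$ are the coefficients of $\Phi_{d'}$. Because each $c_{a,b}$ is an integer, $v(c_{a,b})=v_2(c_{a,b})\,v(2)$, and the valuation of a single monomial $c_{a,b}\,j(E_1)^a$ equals $v_2(c_{a,b})\,v(2)+a\alpha$, which is affine in the two parameters $\alpha$ and $v(2)$. The valuation $v(j(E_2))$ is then one of the slopes of the Newton polygon of $F$, and the target becomes: for every admissible pair $(\alpha,v(2))$ in the cone $0<\alpha<15v(2)$, this polygon forces the root corresponding to $j(E_2)$ to have slope $-\alpha$, i.e.\ $v(j(E_2))=\alpha$. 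Since all the exponents are affine in $(\alpha,v(2))$, the lower convex hull is piecewise linear with a combinatorial type that is constant on finitely many subcones; hence, for each fixed $d'$, the entire claim collapses to a finite check on the integers $v_2(c_{a,b})$ together with the exponents $a,b$.

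Carrying this out for each odd $d\le\bd$ is the computational core: one computes $\Phi_{d'}$ for every odd $d'\le\bd$ from known tables and algorithms for modular polynomials, reads off the $2$-adic valuations of the coefficients, and verifies the Newton-polygon condition over the cone. I expect the main obstacle to be twofold. First, the coefficients of $\Phi_{d'}$ grow extremely fast with $d'$, so the verification is feasible only up to a finite bound; this is exactly why the statement is restricted to $d\le\bd$ rather than asserting all of Conjecture \ref{conj.eqjinv}. Second, and more delicate, is controlling cancellation in the sums $C_b=\sum_a c_{a,b}\,j(E_1)^a$: when several monomials attain the minimal valuation, $v(C_b)$ may exceed that minimum, which could in principle distort the polygon. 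The threshold $15v(2)$ is precisely the range in which, for the degrees checked, no such cancellation creates a spurious segment of slope $\neq-\alpha$; establishing this uniformly across the whole cone $0<\alpha<15v(2)$, rather than at one specialization, is the technical heart of the verification.
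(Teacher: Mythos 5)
The paper does not prove this proposition in the present document: it is imported from \cite{Wang_mod}, Proposition 2.18, and the only indication of method given here is the sentence that it ``is proved through a close examination of the coefficients of the classical modular polynomials.'' Your strategy --- factor off multiplication maps to reduce to a cyclic odd-degree isogeny, invoke $\Phi_{d'}(j(E_1),j(E_2))=0$, and control $v(j(E_2))$ via the Newton polygon of $F(Y)=\Phi_{d'}(j(E_1),Y)$ --- is consistent with that description, and it is the same style of argument the paper carries out explicitly for $d=2$ in Proposition \ref{prop.2isog} (there via a rational parametrization of $\Phi_2=0$ rather than a Newton polygon).

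Two things separate your write-up from a proof. First, the Newton polygon only yields the multiset of valuations of \emph{all} roots of $F$; nothing in the polygon singles out the root $j(E_2)$, so what must actually be verified is that the polygon is a \emph{single} segment of slope $-\alpha$, i.e.\ that every root has valuation $\alpha$. Since $F$ is monic in $Y$ of degree $N:=\deg_Y\Phi_{d'}$, this splits into (i) $v(C_b)\ge (N-b)\alpha$ for all $b$, which is free from the ultrametric inequality --- upward cancellation in intermediate coefficients only helps, so your worry there is misplaced --- and (ii) the equality $v(C_0)=N\alpha$, which holds precisely when the minimum of $v_2(c_{a,0})v(2)+a\alpha$ over the monomials of the constant term is attained \emph{uniquely} at $j(E_1)^N$. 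This is the one place cancellation matters, and it is visibly where the threshold comes from: already for $d'=3$ the constant term is $j^4+2^{15}\cdot 1125\,j^3+2^{30}\cdot(\mathrm{odd})\,j^2+2^{45}\cdot(\mathrm{odd})\,j$, and uniqueness of the minimizer fails exactly at $\alpha=15v(2)$ (matching the sharpness example 27.a1/27.a3 in the paper). Second, the proposition is by nature a finite computational claim, and you describe the computation without performing it; as written this is a verification plan, not a proof. To make it feasible you should also reduce further from cyclic composite degrees to chains of prime-degree steps (legitimate because once each prime step preserves $v(j)$, the hypothesis $0<v(j)<15v(2)$ propagates along the chain), so that only odd primes below $400$ need checking, and you must separately treat the equicharacteristic-$2$ case allowed by \ref{em.intro} and Conjecture \ref{conj.eqjinv}, where one works with $\Phi_{d'}$ reduced modulo $2$ and the condition degenerates to $0<v(j(E_1))<\infty$.
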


Proposition \ref{prop.vj} is proved through a close examination of the coefficients of the classical modular polynomials. Our next proposition suggests that a version of Theorem \ref{thm.Dok} might hold in the equicharacteristic case.

\begin{proposition}\label{prop.sametype}
	Let $\mathcal{O}_K$ be as in \ref{em.intro}. Let $E_1/K$ and $E_2/K$ be elliptic curves and assume that there exists an isogeny of odd degree $d$ from $E_1/K$ to $E_2/K$. Assume that $E_1/K$ and $E_2/K$ have additive reduction and good supersingular reduction after a quadratic extension.
	
	$\quad$Assume that Conjecture \ref{conj.eqjinv} is true. Then the reduction types of $E_1/K$ and $E_2/K$ are the same.
\end{proposition}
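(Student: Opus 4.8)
The plan is to reduce the statement to Theorem~\ref{thm.supred} applied to each curve, the key point being that $E_1$ and $E_2$ acquire good supersingular reduction over one and the \emph{same} quadratic extension, so that the invariant $s_{L/K}$ entering Theorem~\ref{thm.supred} is common to both; one then uses Conjecture~\ref{conj.eqjinv} to compare $v(j(E_1))$ and $v(j(E_2))$ against the common threshold $4s_{L/K}-4$. First I would record the relevant invariance properties. By \cite{Grothendieck}, IX, Cor.~2.2.7, the minimal extension needed to acquire good reduction is invariant under isogeny, so $E_1$ and $E_2$ become good over the same quadratic extension $L/K$; moreover, by Lemma~\ref{le.supersing} the reduction of $E_2$ is supersingular whenever that of $E_1$ is. Hence both curves satisfy the hypotheses of Theorem~\ref{thm.supred} with the same value of $s_{L/K}$. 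I would also note that $v(j(E_i))>0$ for $i=1,2$, since the supersingular $j$-invariant in residue characteristic $2$ equals $0$ and $j$ is a base-change invariant (in mixed characteristic the sharper bound $v(j(E_i))\ge 12$ is recorded in Remark~\ref{rk.gt12}). Finally, Lemma~\ref{le.slkbound} gives $s_{L/K}\le 2v(2)$, so the threshold obeys $4s_{L/K}-4\le 8v(2)-4<15v(2)$; this inequality, valid in both the mixed and the equicharacteristic settings (where $v(2)=\infty$), is what places the entire range relevant to Theorem~\ref{thm.supred}(a) inside the hypothesis of Conjecture~\ref{conj.eqjinv}.

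The heart of the argument is then a dichotomy governed by the threshold. If $v(j(E_1))\le 4s_{L/K}-4$, then $0<v(j(E_1))<15v(2)$ by the bound above, so Conjecture~\ref{conj.eqjinv} yields $v(j(E_2))=v(j(E_1))\le 4s_{L/K}-4$; thus both curves lie in case~(a) of Theorem~\ref{thm.supred} with identical $s_{L/K}$ and identical $v(j)$, and hence share the reduction type $\mathrm{I}^{\ast}_{4s_{L/K}-v(j)}$. If instead $v(j(E_1))>4s_{L/K}-4$, I claim $v(j(E_2))>4s_{L/K}-4$ as well: the dual isogeny $E_2\to E_1$ again has odd degree $d$, so Conjecture~\ref{conj.eqjinv} applies to it, and were $v(j(E_2))\le 4s_{L/K}-4$ we would have $0<v(j(E_2))<15v(2)$ and hence $v(j(E_1))=v(j(E_2))\le 4s_{L/K}-4$, contradicting the case hypothesis. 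Both curves therefore fall in case~(b) of Theorem~\ref{thm.supred} with the same $s_{L/K}$, so the common value $f\in\{1,3,5\}$ determined by $2s_{L/K}+3\equiv f\pmod 6$ forces them to share the same type among $\mathrm{II}$, $\mathrm{I}^{\ast}_0$, $\mathrm{II}^{\ast}$.

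I expect the one step needing genuine care to be the treatment of case~(b) when $v(j(E_1))$ is large --- possibly $\ge 15v(2)$, or even $j(E_1)=0$ with $v(j(E_1))=\infty$ --- since then Conjecture~\ref{conj.eqjinv} cannot be invoked for $E_1$ directly. Passing to the dual isogeny and exploiting the elementary bound $4s_{L/K}-4<15v(2)$ is exactly the device that closes this gap, and confirming that this bound holds uniformly (in particular in equicharacteristic, where $v(2)=\infty$) is the single point I would check explicitly.
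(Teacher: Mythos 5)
Your proof is correct and follows essentially the same route as the paper's: both reduce to Theorem \ref{thm.supred} over the common quadratic extension $L/K$ (invariant under isogeny) and invoke Conjecture \ref{conj.eqjinv} to equate $v(j(E_1))$ and $v(j(E_2))$ whenever one of them lies below the threshold $4s_{L/K}-4$, the hypothesis $0<v(j)<15v(2)$ being automatic there (the paper gets this from Proposition \ref{prop.minext-twist}(b), you from Lemma \ref{le.slkbound}). Your explicit appeal to the dual isogeny and your $\le$/$>$ dichotomy at $4s_{L/K}-4$ are in fact slightly more careful than the paper's $\ge$/$<$ split, which at the boundary value $v(j_m)=4s_{L/K}-4$ nominally cites Corollary \ref{cor.sameIn} whose strict hypothesis does not apply there.
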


\begin{proof}
	Recall that the quadratic extension $L/K$ to semistability is the same for $E_1/K$ and $E_2/K$. Let $j_m:=j(E_m)$ for $m=1,2$.
	
	If $v(j_m)\ge 4s_{L/K}-4$ for $m=1$ and $2$, then $E_1/K$ and $E_2/K$ have the same reduction type by Corollary \ref{cor.sameIn}.
	
	If $v(j_m)< 4s_{L/K}-4$ for $m=1$ or $2$, then $v(j_m)\le 12v(2)$ by Proposition \ref{prop.minext-twist} (b). Because we assumed that Conjecture \ref{conj.eqjinv} is true, it follows that $v(j_1)=v(j_2)$. So the reduction types of $E_1/K$ and $E_2/K$ are the same by Theorem \ref{thm.supred} (a) since the reduction type only depends on $s_{L/K}$ and the valuation of the $j$-invariant. 
\end{proof}

\begin{remark}
	The following example shows that the upper bound in Conjecture \ref{conj.eqjinv} is sharp in the degree $3$ case. Let $E_1/\mathbb{Q}$ and $E_2/\mathbb{Q}$ be elliptic curves with LMFDB labels 27.a1	and 27.a3, respectively. There is a $3$-isogeny from $E_1/\mathbb{Q}$ to $E_2/\mathbb{Q}$. Meanwhile, $j(E_1)=-2^{15}\cdot3\cdot5^3$ and $j(E_2)=0$.
\end{remark}

We now consider the reduction types of elliptic curves related by a $2$-isogeny.

\begin{proposition}\label{prop.2isog}
	Let $\mathcal{O}_K$ be as in \ref{em.intro} with mixed characteristic $2$. Let $E_1/K$ and $E_2/K$ be elliptic curves and assume that there exists a $2$-isogeny from $E_1/K$ to $E_2/K$. Let $j_1:=j(E_1)$ and $j_2:=j(E_2)$. Then the following is true.
	\begin{enumerate}[label=(\roman*)]
		\item If $v(j_1)=v(j_2)>0$, then $v(j_1)=v(j_2)=6v(2)$.
		\item If $0<v(j_1)<v(j_2)$, then $v(j_1)<6v(2)$. Moreover:
		
		\begin{enumerate}[label=(\alph*)]
			\item If $v(j_1)<4v(2)$, then $v(j_2)=2v(j_1)$.
			\item If $4v(2)<v(j_1)<6v(2)$, then $v(j_2)=12v(2)-v(j_1)$.
			\item If $v(j_1)=4v(2)$, then $v(j_2)= 8v(2)+3r$ for some $r\in\mathbb{Z}_{\ge 0}$. 
		\end{enumerate}
		\item Assume that $K=\mathbb{Q}_2^{\operatorname{unr}}$, $j_1,j_2\in \mathbb{Q}_2\subset K$ and $v(j_1)=4v(2)<v(j_2)$. Then $v(j_2)= 8v(2)+3r$ for some $r\in\mathbb{Z}_{\ge 1}$.
	\end{enumerate}
	
\end{proposition}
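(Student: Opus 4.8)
The plan is to make everything flow from the classical level-$2$ modular polynomial $\Phi_2(X,Y)$, which vanishes exactly on the $j$-invariants of $2$-isogenous elliptic curves, so that $\Phi_2(j_1,j_2)=0$. I would first record it explicitly,
\begin{multline*}
\Phi_2(X,Y)=X^3+Y^3-X^2Y^2+1488(X^2Y+XY^2)-162000(X^2+Y^2)\\
+40773375\,XY+8748000000(X+Y)-157464000000000,
\end{multline*}
together with the $2$-adic valuations of its coefficients: $v(1488)=v(162000)=4v(2)$, $v(40773375)=0$, $v(8748000000)=8v(2)$, and $v(157464000000000)=12v(2)$. I would also use the factorization $\Phi_2(X,0)=(X-54000)^3$, verified via $162000=3\cdot 54000$, $8748000000=3\cdot54000^2$, $157464000000000=54000^3$, noting $v(54000)=4v(2)$ since $54000=2^4\cdot3^3\cdot5^3$.

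The engine of the proof is the Newton polygon. View $\Phi_2(j_1,Y)=Y^3+c_2Y^2+c_1Y+c_0$ as a cubic in $Y$, of which $j_2$ is a root, and set $a:=v(j_1)$, $w:=v(2)$. From the factorization, $c_0=\Phi_2(j_1,0)=(j_1-54000)^3$, so $v(c_0)=3\,v(j_1-54000)$; away from $a=4w$ this is $3a$ (if $a<4w$) or $12w$ (if $a>4w$). Inspecting the terms gives $v(c_1)=\min(a,8w)$ and $v(c_2)=\min(2a,4w)$, with possible extra cancellation only at the boundaries $a=8w,2w$; that cancellation is harmless because the vertex $(2,v(c_2))$ never lies on the lower hull, and at $a=8w$ the points $(0,12w),(2,4w),(3,0)$ are already collinear. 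Reading the lower hull of $(0,v(c_0)),(1,v(c_1)),(2,v(c_2)),(3,0)$ then produces the multiset of root valuations by range of $a$: for $0<a<4w$ they are $2a,\tfrac a2,\tfrac a2$; for $a=4w$ they are $v(c_0)-4w,2w,2w$; for $4w<a<6w$ and for $6w<a<8w$ they are $12w-a,\tfrac a2,\tfrac a2$; for $a=6w$ they are $6w,3w,3w$; and for $a\ge 8w$ all three equal $4w$.

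Since $j_2\in K$, the integer $v(j_2)$ is one of these root valuations, and the statement then reads off case by case. For (i), a short check shows $a$ itself occurs among the root valuations only when $a=6w$ (in every other range the requisite equality forces $a=0$, $w=0$, or the impossible $v(c_0)=8w$), giving $v(j_1)=v(j_2)=6v(2)$. For (ii), the hypothesis $v(j_2)>a$ first forces $a<6w$, since for $a\ge 6w$ no root valuation exceeds $a$; then in each subcase there is a \emph{unique} root valuation strictly larger than $a$, namely $2a$ when $a<4w$ (yielding (a)), $12w-a$ when $4w<a<6w$ (yielding (b)), and $v(c_0)-4w$ when $a=4w$. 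In the last case $v(c_0)=3\,v(j_1-54000)=12w+3r$ with $r:=v(j_1-54000)-4w\ge 0$ (using $v(j_1)=v(54000)=4w$), so $v(j_2)=8w+3r$, which is (c).

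Finally, for (iii) I would upgrade $r\ge 0$ to $r\ge 1$ using the arithmetic of $\mathbb{Q}_2^{\operatorname{unr}}$: here $w=1$, and writing $j_1=2^4u$, $54000=2^4\cdot3375$ with $u,3375\in\mathbb{Z}_2^\times$, both unit parts reduce to $1$ in $\mathbb{F}_2\subset\overline{\mathbb{F}}_2$, so $v(u-3375)\ge 1$ and hence $v(j_1-54000)\ge 5$, forcing $v(j_2)=8+3r$ with $r\ge 1$. The main obstacle I anticipate is not conceptual but bookkeeping: pinning down the coefficient valuations of $\Phi_2$ and confirming that boundary cancellations do not disturb the relevant Newton-polygon vertices. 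The genuinely delicate point is the total cancellation in $c_0$ at $a=4w$, where only the identity $c_0=(j_1-54000)^3$ lets one extract $v(j_2)$ precisely and supplies the residue-field refinement needed for (iii).
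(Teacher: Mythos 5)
Your proof is correct, but it takes a genuinely different route from the paper's. The paper parametrizes the curve $\Phi_2(X,Y)=0$ explicitly by $(X,Y)=\bigl((t+16)^3/t,\,(t+256)^3/t^2\bigr)$ and runs the case analysis on $v(t)$; for part (iii) it then needs an elimination argument to show that the parameter $t$ itself lies in $\mathbb{Q}_2$ before extracting the residue condition. You never leave the modular polynomial: you treat $\Phi_2(j_1,Y)$ as a cubic in $Y$, pin down the coefficient valuations (the one essential input being $\Phi_2(X,0)=(X-54000)^3$, which is exactly what controls the delicate case $v(j_1)=4v(2)$), and read the multiset of root valuations off the Newton polygon. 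I verified the coefficient valuations, the hull in each range of $a=v(j_1)$ (including the boundary cancellations at $a=2v(2)$ and $a=8v(2)$, which you correctly argue do not move any relevant slope), and the case-by-case deductions; the resulting root-valuation lists agree with what the parametrization gives, so (i)--(iii) all follow as you claim. What each approach buys: the parametrization makes the three isogenous curves and their origin transparent, whereas your argument is more self-contained and in particular yields a markedly shorter proof of (iii) --- observing that $j_1/2^4$ and $3375$ both reduce to $1$ in the residue field replaces the paper's computation showing $t\in\mathbb{Q}_2$. One cosmetic point: the assertion that $(2,v(c_2))$ never lies on the lower hull should say it is never a \emph{vertex} of the hull, since for $a\ge 8v(2)$ it sits on the hull segment; your remark about collinearity already covers the substance.
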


\begin{proof}
	Recall that 
	\begin{align*}
		\Phi_2(X,Y)=X^3 -X^2Y^2 + 1488X^2Y -162000X^2 + 1488XY^2+ 40773375XY \\
		\pushright{ + 8748000000X+ Y^3 -162000Y^2 + 8748000000Y -157464000000000}.
	\end{align*}
	
	See \cite{Fricke} (8) and (9) on page 372. Let $C$ be the affine curve defined by $\Phi_2(X,Y)=0$. Let $\overline{\Phi}_2(X,Y,Z)$ be the homogenization of $\Phi_2(X,Y)$ and let $\overline{C}$ be the projective curve defined by $\overline{\Phi}_2(X,Y,Z)=0$. Under the natural identification, we have that $\overline{C}(\overline{K})\backslash\{(1:0:0),(0:1:0)\}=C(\overline{K})$.

	The curve $\overline{C}$ has the following parametrization (\cite{Maier}, Table 7):
	
	\begin{align*}
		f:\mathbb{P}^1(\overline{K})&\longtwoheadrightarrow \overline{C}(\overline{K})\\
		(t_1:t_2)&\longmapsto ((t_1+16t_2)^3t_1:(t_1+256t_2)^3t_2:t_1^2t_2^2).
	\end{align*}
	Note that $f((1:0))=(1:0:0)$ and $f((0:1))=(0:1:0)$. Under the natural identification, $f$ restricts to the following surjective map:
	\begin{align*}
		f_0:\overline{K}\setminus\{0\}&\longtwoheadrightarrow C(\overline{K})\\
		t&\longmapsto ((t+16)^3/t,(t+256)^3/t^2).
	\end{align*}

	Note that if $f_0(t)=(X,Y)$, then $f_0(2^{12}/t)=(Y,X)$. Let us now prove the statements (i)-(iii) in the proposition. Without loss of generality, assume that $0<v(j_1)\le v(j_2)$. Since $\Phi_2(j_1,j_2)=0$, we have that $(j_1,j_2)=((t+16)^3/t,(t+256)^3/t^2)$ or $(j_1,j_2)=((t+256)^3/t^2,(t+16)^3/t)$ for some $t\in\overline{K}\backslash\{0\}$ satisfying $v(t)\le 6v(2)$. Let $X:=(t+16)^3/t$, and $Y:=(t+256)^3/t^2$. By abuse of notation, we also use $v$ to denote the unique extension of the valuation on $K$ to any finite extension $F$ of $K$ so that $v(\pi_F)=\frac{1}{[F:K]}v(\pi_K)$. The following are the possible cases.

	\begin{enumerate}[wide, labelwidth=!, labelindent=16pt, label=(\arabic*)]
		\item 	If $v(t)<4v(2)$, then $v(X)=2v(t)$ and $v(Y)=v(t)$. Because $v(j_1), v(j_2)> 0$, we know that $v(t)>0$. Since $v(j_1)\le v(j_2)$, we have that $(j_1,j_2)=(Y,X)$ and $v(j_2)=2v(j_1)$.
		\item 	
		Assume that $4v(2)<v(t)\le 6v(2)$. Then $v(X)=12v(2)-v(t)$ and $v(Y)=v(t)$. If $4v(2)<v(t)<6v(2)$, then $(j_1,j_2)=(X,Y)$ since $v(j_1)\le v(j_2)$. It follows that $v(j_2)=12v(2)-v(j_1)$. If $v(t)=6v(2)$, then $(j_1,j_2)=(X,Y)$ or $(j_1,j_2)=(Y,X)$. In either case, we have that $v(j_1)=v(j_2)=6v(2)$.
		
		\item 
		If $v(t)=4v(2)$, then $v(Y)=v(t)=4v(2)$, and $v(X)=3v(t+16)-v(t)=3v(2^{-4}(t+16))+8v(2)$. Thus $(j_1,j_2)=(Y,X)$. Therefore $v(j_1)=4v(2)$ and $v(j_2)=8v(2)+3r$ for some $r\in\mathbb{Z}_{\ge 0}$. 
	\end{enumerate}
	
	If $v(j_1)=v(j_2)>0$, then we are in Case (2) and $v(j_1)=v(j_2)=6v(2)$. This proves (i). If $0<v(j_1)<v(j_2)$, In each of the cases (1)-(3), $v(j_1)<6v(2)$. If $v(j_1)<4v(2)$, then we are in Case (1) and $v(j_2)=2v(j_1)$. If $4v(2)<v(j_1)<6v(2)$, then we are in Case (2) and $v(j_2)=12v(2)-v(j_1)$.
	If $v(j_1)=4v(2)$, then we are in Case (3) and $v(j_2)= 8v(2)+3r$ for some $r\in\mathbb{Z}_{\ge 0}$. This finishes the proof of (ii).
	
	We now prove (iii). Assume that $K=\mathbb{Q}_2^{\operatorname{unr}}$ and $j_1,j_2\in \mathbb{Q}_2\subset K$ and $v(j_1)=4v(2)<v(j_2)$. Then we are in Case (3) in the proof of (i) and (ii), i.e., there exists a $t\in\overline{K}\backslash\{0\}$ satisfying $v(t)= 4v(2)$ and $(j_1,j_2)=((t+256)^3/t^2,(t+16)^3/t)$. We claim that $t\in\mathbb{Q}_2\subset K$. We prove it by contradiction. Assume that $t\notin\mathbb{Q}_2$.
	Let $\alpha:=(t+256)^3-j_1 t^2$ and $\beta:=(t+16)^3-j_2 t$. Then $\alpha=\beta=0$. It follows that
	\begin{align*}
		4095(\alpha-\beta)&+(720-j_1)(\alpha-16^3\beta)/t
		=(j_1^2 + 195120j_1 + 4095j_2 + 660960000)t\\
		&- 4096j_1j_2 + 2949120j_1 + 
		2949120j_2 + 66562560000
		=0
	\end{align*}
	Because $t\notin \mathbb{Q}_2$, we derive that 
	\begin{equation}\label{eq.coef_t}
		j_1^2 + 195120j_1 + 4095j_2 + 660960000=0
	\end{equation}
	and 
	\begin{equation}\label{eq.const}
		- 4096j_1j_2 + 2949120j_1 + 
		2949120j_2 + 66562560000=0.
	\end{equation}
	Solve for $j_2$ from \eqref{eq.coef_t} and plug it into \eqref{eq.const}, we get that
	$$	(j_1 + 3375)(j_1^2 + 191025j_1 - 121287375)=0.$$
	Therefore $j_1=-3375$ or $j_1$ is a solution of $j_1^2 + 191025j_1 - 121287375=0$. In either case, $v(j_1)=0$. This is a contradiction. Hence $t\in\mathbb{Q}_2$. It follows that $2^{-4}t$ reduces to $1$ in the residue field $k$. So $v(2^{-4}(t+16))=v(2^{-4}t+1)>0$. From Case (3), we conclude that $v(j_2)=3v(2^{-4}(t+16))+8v(2)$ with $v(2^{-4}(t+16))>0$. Hence part (iii) is true.
\end{proof}

\begin{proposition}\label{prop.sametype2}
	Let $\mathcal{O}_K$ be as in \ref{em.intro} with mixed characteristic $2$. Let $E_1/K$ and $E_2/K$ be elliptic curves and assume that there exists a $2$-isogeny from $E_1/K$ to $E_2/K$. Assume that $E_1/K$ and $E_2/K$ have additive reduction and good supersingular reduction after a quadratic extension $L/K$. Then $E_1/K$ and $E_2/K$ have the same Kodaira type if and only if either $\operatorname{min}(v(j(E_1)),v(j(E_2)))>4s_{L/K}-4$ or $v(j(E_1))=v(j(E_2))=6v(2)$.
\end{proposition}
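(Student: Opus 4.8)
The plan is to reduce the entire statement to Theorem~\ref{thm.supred}, which shows that the Kodaira type of each $E_i/K$ depends only on the pair $(s_{L/K}, v(j(E_i)))$. Write $s := s_{L/K}$ and $j_i := j(E_i)$. Since $E_1/K$ and $E_2/K$ become good supersingular over the \emph{same} quadratic extension $L/K$, the invariant $s$ is common to both curves. Moreover Remark~\ref{rk.gt12} gives $v(j_i) \ge 12 > 0$, so both curves have positive $j$-valuation; this is what will let me invoke Proposition~\ref{prop.2isog}(i) at the end.

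First I would make explicit how the type depends on $v(j)$ once $s$ is fixed. By Theorem~\ref{thm.supred}(a), when $v(j_i) \le 4s-4$ the type is $\mathrm{I}^{\ast}_{4s-v(j_i)}$, whose index $4s - v(j_i) \ge 4$ is a strictly decreasing function of $v(j_i)$; in particular the assignment $v(j_i) \mapsto \mathrm{I}^{\ast}_{4s - v(j_i)}$ is injective on this range. By Theorem~\ref{thm.supred}(b), when $v(j_i) > 4s-4$ the type is one of $\mathrm{II}, \mathrm{I}^{\ast}_0, \mathrm{II}^{\ast}$, and depends only on $s$ (not on $v(j_i)$). Crucially these two families of Kodaira types are disjoint: a symbol $\mathrm{I}^{\ast}_n$ with $n \ge 4$ is never equal to $\mathrm{II}, \mathrm{I}^{\ast}_0$, or $\mathrm{II}^{\ast}$. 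Consequently $E_1/K$ and $E_2/K$ share a type if and only if \emph{either} both satisfy $v(j_i) > 4s-4$, \emph{or} both satisfy $v(j_i) \le 4s-4$ together with $v(j_1) = v(j_2)$; the ``mixed'' possibility (one curve in each range) is ruled out by disjointness.

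It then remains to rephrase these two alternatives in the form asserted. The first alternative, both $v(j_i) > 4s-4$, is exactly $\min(v(j_1), v(j_2)) > 4s-4$. For the second alternative, I would feed the equality $v(j_1) = v(j_2)$ (together with $v(j_i) > 0$) into Proposition~\ref{prop.2isog}(i), which forces $v(j_1) = v(j_2) = 6v(2)$. Conversely, if $v(j_1) = v(j_2) = 6v(2)$ then $E_1/K$ and $E_2/K$ automatically have the same type, because equal pairs $(s, v(j_i))$ yield equal types by Theorem~\ref{thm.supred}; this holds whether $6v(2) \le 4s-4$ or $6v(2) > 4s-4$, so no extra compatibility between $v(2)$ and $s$ need be checked. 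Combining the two alternatives gives the claimed equivalence.

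The only genuinely arithmetic input is Proposition~\ref{prop.2isog}(i), whose proof rests on the parametrization of the modular curve $\Phi_2 = 0$; the rest of the argument here is bookkeeping of Kodaira symbols. Accordingly the main point to get right is the disjointness of the symbols $\mathrm{I}^{\ast}_{n}$ with $n \ge 1$ from $\{\mathrm{II}, \mathrm{I}^{\ast}_0, \mathrm{II}^{\ast}\}$, which is what guarantees that a curve in case (a) and a curve in case (b) can never share a type and thereby cleanly separates the two alternatives.
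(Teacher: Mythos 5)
Your proof is correct and follows essentially the same route as the paper's: both directions reduce to Theorem~\ref{thm.supred} (with the key observation that $\mathrm{I}^{\ast}_{n}$ for $n\ge 4$ is never $\mathrm{II}$, $\mathrm{I}^{\ast}_0$, or $\mathrm{II}^{\ast}$), and the case $v(j_1)=v(j_2)\le 4s_{L/K}-4$ is converted to $v(j_1)=v(j_2)=6v(2)$ via Proposition~\ref{prop.2isog}(i). The only cosmetic point is that Remark~\ref{rk.gt12} is stated for curves with good supersingular reduction over $K$ itself, so to get $v(j_i)>0$ you should apply it to the quadratic twists (which have the same $j$-invariants) or simply note that supersingular reduction over $L$ in residue characteristic $2$ forces $v(j_i)>0$; this does not affect the argument.
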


\begin{proof}
	Let $j_1:=j(E_1)$ and $j_2:=j(E_2)$. Without loss of generality, assume that $v(j_1)\le v(j_2)$. If $v(j_1)>4s_{L/K}-4$ or $v(j_1)=v(j_2)=6v(2)$, then $E_1/K$ and $E_2/K$ have the same Kodaira type by Theorem \ref{thm.supred}.

	Conversely, assume that $E_1/K$ and $E_2/K$ have the same Kodaira type. Assume that $v(j_1)\le 4s_{L/K}-4$. Then $E_1/K$ has Kodaira type $\mathrm{I}^{\ast}_{-v(j_1)+4s_{L/K}}$ by Theorem \ref{thm.supred} (a). If $v(j_2)>4s_{L/K}-4$, then $E_2/K$ has Kodaira type $\mathrm{II}, \mathrm{I}^{\ast}_0$ or $\mathrm{II}^{\ast}$ by Theorem \ref{thm.supred} (b). This is a contradiction. If $v(j_2)\le 4s_{L/K}-4$, then $E_2/K$ has Kodaira type $\mathrm{I}^{\ast}_{-v(j_2)+4s_{L/K}}$ by Theorem \ref{thm.supred} (a). Hence $v(j_1)=v(j_2)$. By Proposition \ref{prop.2isog} (i), $v(j_1)=v(j_2)=6v(2)$.
\end{proof}

\begin{remark}
	Proposition \ref{prop.sametype2} suggests that when the wild ramification is large, the reduction types are more likely to be different.
\end{remark}

\bibliographystyle{plain}
\nocite{*}
\bibliography{ref_red.bib}

\end{document}